 \DeclareMathAlphabet{\mathpzc}{OT1}{pzc}{m}{it}
\newtheorem*{theorem1}{Theorem \ref{theorem: automorphism}}
\newtheorem*{theorem3}{Theorem \ref{functor}}
 \newtheorem{theorem}{Theorem}[section]
 \newtheorem{lemma}[theorem]{Lemma}
 \newtheorem{proposition}[theorem]{Proposition}
 \newtheorem{corollary}[theorem]{Corollary}
 \newtheorem{definition}[theorem]{Definition}
  \theoremstyle{definition}
 \newtheorem{remark}[theorem]{Remark}
\renewenvironment{proof}{\noindent{\it
Proof.}}{\bgroup\hspace{\stretch{1}}$\square$\egroup\medskip\par}
\newcommand{\Rep}{\mathrm{Rep}}
\newcommand{\hol}{\mathrm{hol}}
\newcommand{\Aut}{\mathsf{Aut}}
\newcommand{\MC}{\mathsf{MC}}
\newcommand{\id}{\mathrm{id}}
\newcommand{\End}{\mathrm{End}}
\newcommand{\Hom}{\mathrm{Hom}}
\newcommand{\ev}{\mathrm{ev}}
\newcommand{\A}{\mathsf{A}}
\newcommand{\FlatZ}{\mathcal{F}_\mathbb{Z}}
\newcommand{\basedloop}{\Omega_*}
\renewcommand{\loop}{\mathcal{L}}
\newcommand{\HC}{\mathsf{HC}}
\newcommand{\HH}{\mathsf{HH}}
\newcommand{\s}{\mathsf{s}}
\newcommand{\Deg}{\mathsf{D}}
\newcommand{\N}{\mathsf{N}}
\newcommand{\Moore}{\mathsf{M}}
\newcommand{\EZ}{\mathsf{EZ}}
\newcommand{\chains}{C_\bullet}
\newcommand{\Ainfty}{\mathsf{A}_\infty}
\newcommand{\C}{\mathcal{C}}
\newcommand{\D}{\mathcal{D}}
\newcommand{\dgFun}{\mathsf{dg}\mathrm{-}\mathsf{Fun}}
\newcommand{\Ob}{\mathsf{Ob}}
\newcommand{\G}{\mathsf{G}}
\newcommand{\barC}{\mathbb{B}}
\newcommand{\barH}{\mathbb{H}}
\begin{document}

\vspace{15cm}
 \title{Flat $\mathbb{Z}$-graded connections and loop spaces}

\author{Camilo Arias Abad\footnote{Escuela de Matemáticas, Universidad Nacional de Colombia, Medell\'in.  Partially supported
by the Swiss National Science Foundation at the University of Toronto. email: camiloariasabad@gmail.com.} \, and Florian Sch\"atz\footnote{Mathematics Research Unit, University of Luxembourg. Partially supported by the center of excellence grant ``Centre for Quantum Geometry of Moduli Spaces'' from the Danish National Research Foundation (DNRF95). email: florian.schaetz@gmail.com.}}

 \maketitle

 \begin{abstract} 
The pull back of a flat bundle $E\rightarrow X$ along the evaluation map $\pi: \loop X \rightarrow X$ from the free loop space $\loop X$ to $X$ comes equipped with a canonical automorphism given by the holonomies of $E$. This construction naturally generalizes to flat $\mathbb{Z}$-graded connections on $X$. Our main result is that the restriction of this holonomy automorphism to the based loop space $\basedloop X$ of $X$ provides an $\A_\infty$ quasi-equivalence
between the dg category of flat $\mathbb{Z}$-graded connections on $X$
and the dg category of representations of $\chains(\basedloop X)$,
the dg algebra of singular chains on $\basedloop X$.
\end{abstract}

\vspace{1cm}

\begin{flushright}
{\em Dedicated to James Dillon Stasheff, with gratitude.}
\end{flushright}

\vspace{0.5cm}

\tableofcontents

\section*{Introduction}\label{s:intro}
\addcontentsline{toc}{section}{\protect\numberline{}Introduction}

\subsection*{In a few words}

Suppose that $* \hookrightarrow X$ is a pointed manifold, $E$ is a flat vector bundle over $X$ and
$$\pi: \loop X=\{\gamma: S^1 \to X\} \rightarrow X$$
is the map given by evaluation at $ 1\in S^1\subset \mathbb{C}$. There is an automorphism of the pull back bundle $\pi^*E$, 
given by the holonomies of $E$.
We denote by $C_\bullet(\basedloop^\Moore X)$
the Pontryagin algebra of chains on the space of Moore loops in $X$. There is a natural map of dg algebras
\[ C_\bullet(\basedloop^\Moore X) \rightarrow H_0(\basedloop^\Moore X) \cong H_0(\basedloop X)\cong \mathbb{R}[\pi_1(X,*)].\]
The restriction of the holonomy automorphism to the based loop space gives $E\vert_*$ the structure of a representation of $\pi_1(X,*)$ and therefore the structure of a module over the group ring $\mathbb{R}[\pi_1(X,*)]$.
Via the morphism above, the vector space $E\vert_*$ becomes a module over the Pontryagin algebra. Thus, flat connections produce, via holonomies, the simplest kind of dg modules over the Pontryagin algebra.

We show that one can obtain
essentially all dg modules over the Pontryagin algebra 
if one passes to flat $\mathbb{Z}$-graded connections over $X$ and considers their generalized holonomies.
Even better, this construction yields a weak equivalence ($\A_\infty$ quasi-equivalence) between the dg category $\FlatZ(X)$ of flat $\mathbb{Z}$-graded
vector bundles over $X$ and the dg category of representations
of $\chains(\basedloop^\Moore X)$, see Theorem \ref{functor} in
\S \ref{subsection: automorphisms - functor version}.

\subsection*{In more words}
There are many equivalent points of view on the notion of a local system over a manifold $X$.
Some of the possibilities are listed in the following table:

\begin{center}
  \begin{tabular}{| c | c | c |}
\hline
   & {\bf point of view}   & {\bf local system}   \\ \hline
   {\bf I}& infinitesimal & flat connection \\ % \hline
 {\bf T} &  topological {\footnotesize{(\& simplicial)}}& representation of $\pi_1(X) $   \\ %\hline
{\bf H}  &homological & locally constant sheaf \\ %\hline
%{\bf D}  &Based loop space & Module over $C_\bullet(\basedloop^\Moore X)$ \\ %\hline  
 {\bf L} & free loop space &  automorphism over $\loop X$  \\
 \hline
  \end{tabular}
\end{center}

A higher version of the notion of a local system, called $\infty$-local system, has appeared in several recent works, including Block-Smith \cite{BS}, Arias Abad-Sch\"atz \cite{AS}, Holstein \cite{Holstein} and Malm \cite{Malm_thesis}. The word {\it higher} refers to the fact that, while local systems correspond to representations of the fundamental groupoid $\pi_1(X)$, $\infty$-local systems correspond to representations of the whole homotopy type of $X$ i.e. the infinity groupoid $\pi_\infty(X)$ of $X$. It turns out that all the different points of view on local systems mentioned above generalize to the $\infty$-local system case:

\begin{center}
  \begin{tabular}{| c |  c | c | }
\hline 
   & {\bf point of view}   & {\bf $\infty$-local system}   \\ \hline
   {\bf I}& infinitesimal & flat $\mathbb{Z}$-graded connection \\ % \hline
{\bf S} & simplicial & representation of $\pi_\infty(X)$\\ %\hline 
{\bf T} &  topological & representation of $C_\bullet(\basedloop^\Moore X) $   \\ %\hline
{\bf H}  &homological & homotopy locally constant sheaf \\ %\hline

 {\bf L} & free loop space &  $\A_\infty$-automorphism over $\loop X$ \\ 
 \hline
  \end{tabular}
\end{center}

The fact that the different points of view on local systems are equivalent can be given a precise meaning:
each of the points of view can be used to define a category of local systems, and all the resulting categories are equivalent. Interestingly, there is a similar situation for $\infty$-local systems: each of the points of view
{\bf I}, {\bf S}, {\bf T}, {\bf H} defines a {\em dg} category and all these {\em dg} categories are equivalent. The equivalence of {\bf I} and {\bf S} was proved by Block-Smith \cite{BS} extending ideas of Igusa \cite{Igusa}. The equivalence between $\bf{S},{\bf T}$ and ${\bf H}$ was proved by Holstein \cite{Holstein}, using the homotopy theory of dg categories.
The present work is concerned with point of view ${\bf L}$ and a direct comparison between points of view  ${\bf I}$ and ${\bf T}$.

Let us be more explicit about what is meant by point of view ${\bf L}$. We will denote by $\pi: \loop X \rightarrow X$ the map given by evaluation at $1 \in S^1\subset \mathbb{C}$. Given a vector bundle $ E$ over $X$, the pull back bundle $\pi^*E$ is naturally a flat vector bundle over $\loop X$. Moreover, there is a natural automorphism $\hol \in \Aut (\pi^*E)$ given by the holonomies of $E$. Also, the automorphism $\hol$, when seen as a section of $ \End (\pi^*E)$ is covariantly constant. Hence, $\hol$ is an automorphism of the bundle $\pi^*E$ together with its flat connection. The following result asserts that the same holds in the case of $\infty$-local systems:

\begin{theorem1}
Let $E\rightarrow X$ be a flat graded vector bundle and denote by $\pi: \loop X\rightarrow X$ the natural projection given by evaluation at $1\in S^1$. Then $\pi^*E$ is a flat graded vector bundle over $\loop X$ which has a distinguished automorphism $\hol(E) \in \Aut(\pi^*E)$, characterized by the property that
if $U \subseteq X$  is an open subset over which $E$ trivializes, i.e.
$E\vert_U \cong U\times V$, we have
\[ \hol(E)|_{\loop U} = \sigma_{\End V}(1\otimes  e^{\alpha}),\]
where $d_\nabla=d +\alpha$.
Moreover, if $f: Y \rightarrow X$ is a smooth map then
\[ \hol(f^*E) =(\loop f)^*\hol(E),\]
with $\loop f: \loop Y \to \loop X$ given by $\loop f(\gamma):=f\circ \gamma$.
\end{theorem1}

Here, $\sigma_{\End V}$ is a twisted version of Chen's iterated integral map $\sigma$, see \cite{Chen} and \cite{GJP}. Variants of $\hol(E)$ appeared previously in the literature, see
 \cite{AlbertoCarlo, AZ} in the context of principal bundles,
 and \cite{Igusa} in the context of $\mathbb{Z}$-graded connections.
 The interpretation of $\hol(E)$ as a morphism in the dg category
 $\FlatZ(\loop X)$ seems to be new.

Our next result is a categorical enhancement of Theorem \ref{theorem: automorphism}.
Pulling back along the evaluation map $\pi: \loop X \to X$
induces a functor of dg categories
$\pi^*: \FlatZ(X) \to \FlatZ(\loop X)$.
It turns out that the generalized holonomies
of any flat $\mathbb{Z}$-graded vector bundle over $X$
can be interpreted as an $\A_\infty$-automorphism $\underline{\hol}$
of the dg functor $\pi^*$. For the precise statement
we refer to Theorem \ref{theorem: automorphism - categorical}
in \S \ref{subsection: automorphisms - functor version} below.

Next, we explain the relationship between points of view ${\bf I}$ and ${\bf T}$ on higher local systems. According to point of view ${\bf T}$, an $\infty$-local system is a finite cochain complex of vector spaces $(E,\partial)$ with the structure of a dg module over the Pontryagin algebra $C_\bullet(\basedloop^\Moore X)$, the dg algebra so singular chains on the based Moore loops. Let $E$ be a flat graded vector bundle over $X$ and denote by $\hol(E)$ the automorphism of $\pi^*E$ provided by Theorem \ref{theorem: automorphism}. We show that by restricting the automorphism $\hol(E)$ to the based loop space, the fiber of $E$ over the base point $*$ inherits the structure of a dg module over the Pontryagin algebra. This result provides an explicit construction of an infinity local system in the sense of ${\bf T}$, given a flat $\mathbb{Z}$-graded connection over $X$. Hence the passage from ${\bf I}$ to ${\bf L}$ and further to ${\bf T}$ produces an explicit $\A_\infty$-functor $\tilde{\varphi}$ of dg categories. Our main result
about this $\A_\infty$-functor is:

\begin{theorem3}
 Let $X$ be a connected manifold.
 The $\Ainfty$-functor
 $$ \tilde{\varphi}: \FlatZ(X) \to \Rep(\chains(\basedloop^\Moore X))$$
 is a quasi-equivalence, i.e.
 \begin{itemize}
 \item
 the chain maps given by $\hat{\varphi}_1$ induce isomorphisms between the cohomologies
 of the Hom-complexes,
 \item the induced functor between the homotopy categories is an equivalence.
 \end{itemize}
\end{theorem3}
 
 This result can be seen as an explicit implementation of the equivalence
 between the points of view ${\bf I}$ and ${\bf T}$ for $\infty$-local systems in terms of Chen's iterated integrals. The {\em existence} of such an equivalence can be also deduced by combining the equivalences
 ${\bf I} \sim {\bf S}$ (Block-Smith \cite{BS}) and
 ${\bf S} \sim {\bf H} \sim {\bf T}$ (Holstein \cite{Holstein}).
 Our proof that the $\Ainfty$-functor $\tilde{\varphi}$ is an quasi-equivalence relies on the standard Riemann-Hilbert correspondence for flat vector bundles and on
 a result due to Felix, Halperin and Thomas \cite{FHT} on
 the Hochschild complex of the Pontryagin algebra.
 \\
 
One of our motivations for writing this paper was to 
bring into closer contact Chen's iterated integrals and the point of view of derived geometry. Most notably,
Holstein's article \cite{Holstein} was a direct source of inspiration for the present work, as was Ben-Zvi and Nadler's \cite{BZN}.

\subsection*{Organization of the paper}

In \S \ref{section: Hochschild} we recall various Hochschild complexes, the dg category of representations $\Rep(A)$ of a dg algebra $A$
and the dg category $\FlatZ(X)$ of flat $\mathbb{Z}$-graded connections over a manifold $X$.
In \S \ref{section: formal holonomies} we define formal holonomies
on the level of the bar complex of a dg algebra and establish a formal version of gauge-invariance.
Section \S \ref{section: automorphisms} relates the formal holonomies to the free loop space $\pi: \loop X \to X$ of a manifold $X$ by means of Chen's iterated integrals.
We construct the generalized holonomy $\hol(E)$ of a flat $\mathbb{Z}$-graded vector bundle $E\to X$ and interpret it as an automorphism of the flat pull back bundle $\pi^*E \in \FlatZ(\loop X)$. We extend this point of view and obtain an $\A_\infty$-automorphism $\underline{\hol}$ of the dg
functor $\pi^*: \FlatZ(X)\to \FlatZ(\loop X)$.
In \S \ref{section: based loops} we first use $\hol(E)$
to obtain a representation of the Pontryagin algebra of $X$, i.e. the dg algebra of chains on the based (Moore) loop space
$\basedloop^\Moore X$.
We then recast the whole $\A_\infty$-automorphism $\underline{\hol}$ as an $\A_\infty$-functor
$\tilde{\varphi}: \FlatZ(X) \to \Rep(\chains(\basedloop^\Moore X))$
and finally establish that $\tilde{\varphi}$ is a quasi-equivalence.
The proof of this fact uses a technical -- and well-known --
result which is proved in Appendix \ref{appendix: HPT}.

\subsubsection*{Conventions}
\begin{itemize}
\item Let $W$ be a graded vector space. The suspension of $W$, denote by $\s W$, is the graded vector space defined by
\[ (\s W)^k:=W^{k+1}.\] 
We will use cohomological grading conventions i.e. all complexes have coboundary operators which increase the degree by $1$ and all dg algebras have differntials of degree $1$. In particular,
the complex of singular chains $\chains(X)$ for $X$ a topological space
is concentrated in non-positive degrees.
\item We assume that all dg algebras come with a unit. If $A$ is a dg algebra, the commutator $[a,b]$ of two elements $a,b \in A$ will always mean the graded commutator, that is:
\[ [a,b]=ab-(-1)^{|a||b|}ba.\]
\item Given a dg algebra  $A$ we will denote by $\MC(A)$ the set of Maurer-Cartan elements in $A$, i.e.
\[ \MC(A):=\{ a\in A^1: da + a\cdot a=0\}.\]

\item All vector bundles are assumed to be complex.

\item Given a dg category $\mathcal{C}$, we denote by $\mathsf{H}^0\mathcal{C}$ its homotopy category, i.e. the category that has the same objects as $\mathcal{C}$ and satisfies:
\[ \Hom_{\mathsf{H}^0\C}(X,Y)=H^0(\Hom(X,Y)).\]
\end{itemize}

\acknowledgements{We would like to thank David Mart\'inez Torres for his help with the proof of Lemma \ref{lemma: skeleton} and Dan Christensen, Peter Teichner and Konrad Waldorf for helpful discussions. Camilo Arias Abad would also like to thank Marco Gualtieri, Alberto Garc\'ia-Raboso and the other participants of the Glab seminar at the Fields Institute for interesting discussions on related subjects.}

\section{Preliminaries}\label{section: Hochschild}

In this section all dg algebras are assumed to have an augmentation $\epsilon:A \rightarrow \mathbb{R}$.
We denote by $\bar{A}$ the augmentation ideal $\ker \epsilon$.

\subsection{Hochschild complexes}\label{subsection: Hochschild}

\begin{definition}
Let $A$ be a dg algebra and $M$ a dg bimodule over $A$.
 The Hochschild chain complex $\HC_\bullet (A,M)$ is the graded vector space
 $$ \HC_\bullet(A,M):= \bigoplus_{k\ge 0}  M\otimes (\s \bar{A})^{\otimes k},$$
 equipped with the boundary operator $b=b_0 + b_1$, where 
 \[ b_0(m\otimes a_1\otimes  \cdots \otimes a_n)=dm\otimes a_1\otimes \cdots \otimes a_n+ \sum_{i=1}^n (-1)^{|m|+\cdots +|a_{i-1}|-i} m\otimes \cdots \otimes d a_i \otimes \cdots \otimes a_n ,\]
 and
\begin{eqnarray*}
b_1(m\otimes \cdots \otimes a_n)&=& (-1)^{|m|+1}ma_1 \otimes \cdots \otimes a_n+\\
&& + \sum_{i=1}^{n-1}(-1)^{|m|+\cdots +|a_{i}|-i+1} m\otimes \cdots \otimes (a_i a_{i+1}) \otimes \cdots \otimes a_n\\
&&+(-1)^{(|m|+\cdots +|a_{n-1}|-n-1)(|a_{n}|-1)}a_n m \otimes \cdots \otimes a_{n-1}.
 \end{eqnarray*}
 The cohomology of this complex, denoted $\HH_{\bullet}(A,M)$, is called the Hochschild homology of $A$ with coefficients in $M$.
\end{definition}

\begin{remark}
In the special case where $M=\mathbb{R}$ is the trivial bimodule we will write $\HC_\bullet(A)$ instead of $\HC_\bullet(A,\mathbb{R})$ and $\HH_{\bullet}(A)$ instead of $\HH_{\bullet}(A,\mathbb{R})$.
In the case where $M=A$ we obtain the bar complex of $A$, which we denote by $\barC A$ instead of $\HC(A,A)$. Similarly we write $\mathbb{H}_\bullet(A)$ instead of $\HH_\bullet(A,A)$.
\end{remark}

\begin{remark}
The Hochschild complex $\HC_\bullet(A)$ comes equipped with a comultiplication
$\Delta$
given by
$$ 
\Delta( a_1\otimes \cdots \otimes  a_k) = \sum_{i=0}^{k}( a_1 \otimes \cdots \otimes  a_i)
\otimes (a_{i+1} \otimes \cdots \otimes a_k).
$$ 
\end{remark}

Following \cite{GJP}, we note that if $A$ is graded commutative, the bar complex $\barC A$ has a natural product $*$ given by
\begin{eqnarray*}
(\beta\otimes  a_1 \otimes \cdots \otimes  a_k) &*&
(\gamma\otimes  a_{k+1} \otimes \cdots \otimes  a_{l+k})
=\\
&=&
 (-1) ^{ (|a_1|+\cdots +|a_k|-k)|\gamma|} \sum_{\chi \in S_{k,l}}[a_1 \otimes \cdots \otimes a_{k+l}](\chi)  \beta \gamma \otimes a_{\chi(1)}\otimes  \cdots \otimes a_{\chi_{k+l}},
 \end{eqnarray*}
where $S_{k+l}$ is the symmetric group, $S_{k,l}$ the set of shuffle permutations and
 $$[a_1 \otimes \cdots \otimes a_{k+l}]: S_{k+l} \rightarrow \{1,-1\}$$
 is the Koszul character sign, i.e. the unique character such that:
 $$[a_1 \otimes \cdots \otimes a_{k+l}](\mu)= (-1)^{(|a_i|-1)(|a_{i+1}|-1)},$$
 if $\mu$ is the transposition $(i,i+1)$.
The product $*$ is associative, graded commutative and compatible with $b$, and hence gives $\barC A$ the structure of a  commutative dg algebra.
Observe that the inclusion $i: A\hookrightarrow \barC A$ is a morphism of dg algebras.
The coboundary operator $b$, the product $*$ and the coproduct $\Delta$ then
make $\HC_\bullet(A)$ into a dg Hopf algebra.

\begin{definition}
Let $A$ be a dg algebra and $M$ a dg bimodule over $A$.
The Hochschild cochain complex $\HC^\bullet(A,M)$ of $A$ with values in $M$ is the cochain complex
$$ \HC^\bullet(A,M) = \Hom(\HC_\bullet(A),M),$$
with differential $b$ defined by the formula:
\begin{eqnarray*}
b(\eta)( a_1 \otimes\cdots \otimes a_k)&:= &d (\eta(a_1\otimes \cdots \otimes a_k))-(-1)^{|\eta|} \eta(b(a_1\otimes \cdots \otimes a_k))\\&&+(-1)^{|\eta|+|a_1|+\cdots +|a_{k-1}|+k}\eta(a_1\otimes
\cdots \otimes a_{k-1})a_k\\
&&+ (-1)^{(|\eta|+1)(|a_1|+1)+1}a_1\eta(a_2 \otimes \cdots \otimes a_k).
\end{eqnarray*}
The Hochschild cohomology $\HH^\bullet(A,M)$ of $A$ with values in $M$
is the cohomology of $\HC^\bullet(A,M)$.
\end{definition}

\begin{definition}
Let $B$ be a dg algebra, seen as a dg module over $A$ via the augmentation map.
The Hochschild cochain complex $\HC^\bullet(A,B)$ of $A$ with values in $B$ is equipped with a cup product
$$
(\varphi \cup \psi)( a_1 \otimes \cdots \otimes  a_{p+q}) :=
(-1)^{|\psi|(|a_1|+\cdots + |a_{p}|-p)}\varphi( a_1\otimes \cdots \otimes  a_p) \cdot \psi( a_{p+1} \otimes \cdots \otimes  a_{p+q}).
$$
The cup product is compatible with the differential $b$ and gives $\HC^\bullet(A,B)$ the structure of a dg algebra.
\end{definition}

\begin{remark}
 The cup product can be expressed in terms of the coproduct $\Delta$ in $\HC(A)$ and the multiplication map $m_B: B\otimes B\to B$ of $B$ as follows:
   $$ \varphi \cup \psi := m_B\circ (\varphi\otimes \psi) \circ \Delta.$$
\end{remark}

\subsection{Representations of dg algebras}\label{subsection: reps of dg algebras}

Let $A$ and $B$ be dg algebras. Denote by $\mathsf{TwHom}(A,B)$ the set of pairs $(\beta,\varphi)$,  where
$\beta \in \MC(B)$ and $\varphi: A \rightarrow B_\beta$ is a unital $\A_\infty$-morphism. Here $B_\beta$ is the algebra $B$ with twisted differential $d+[\beta,-]$.

\begin{lemma}
 There is a natural bijection between 
$ \MC(\HC^\bullet(A,B))$ and  $\mathsf{TwHom}(A,B)$.
\end{lemma}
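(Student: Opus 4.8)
The plan is to unwind both sides of the claimed bijection into concrete algebraic data and check that the Maurer–Cartan equation on $\HC^\bullet(A,B)$ matches exactly the conditions defining a pair $(\beta,\varphi)\in\mathsf{TwHom}(A,B)$. An element $\eta\in\HC^1(A,B)$ is, by definition, a degree-$1$ map $\HC_\bullet(A)\to B$, hence a collection of components $\eta_k\colon (\s\bar A)^{\otimes k}\to B$ for $k\ge 0$, with $\eta_k$ of degree $1$. The component $\eta_0$ is just an element $\beta:=\eta_0\in B^1$, and the components $\eta_k$ for $k\ge 1$ assemble into the Taylor coefficients $\varphi_k$ of a candidate (unital, since we work with the reduced bar complex on $\bar A$) $\A_\infty$-morphism $A\to B$. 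So the first step is to record this decomposition precisely, being careful about the degree shifts coming from $\s$ and about the unitality bookkeeping.

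\smallskip
\noindent\textbf{Second step: expand $b(\eta)+\eta\cup\eta=0$ componentwise.} Using the formula for the Hochschild differential $b$ together with the expression $\varphi\cup\psi = m_B\circ(\varphi\otimes\psi)\circ\Delta$ for the cup product, I would evaluate the equation $b(\eta)(a_1\otimes\cdots\otimes a_k) + (\eta\cup\eta)(a_1\otimes\cdots\otimes a_k)=0$ for each $k\ge 0$. The $k=0$ component reads $d\beta + \beta\cdot\beta = 0$, i.e. $\beta\in\MC(B)$; this is because the internal differential term survives, the bar-differential term vanishes on the empty tensor, the two "slot" terms of $b(\eta)$ vanish (nothing to feed into $\eta_{-1}$), and $\Delta$ on the empty word produces exactly $\beta\otimes\beta$. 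For $k\ge 1$, the internal-differential part of $b(\eta)$ plus the $b_1$-type terms (multiplications $a_ia_{i+1}$) plus the two boundary multiplications by $a_1$ and $a_k$ in $B$, combined with the cross terms $\eta_i\cup\eta_{k-i}$ coming from $\Delta$ (and in particular the terms where $i=0$ or $i=k$, which contribute $[\beta,-]$), should reorganize precisely into the $\A_\infty$-relations for $\varphi$ as a morphism into the \emph{twisted} algebra $B_\beta$ with differential $d+[\beta,-]$. The appearance of $[\beta,-]$ rather than two separate one-sided terms is exactly the point where the $k=0$ part of $\Delta$ feeds into the higher components.

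\smallskip
\noindent\textbf{Third step: signs, and unitality.} The genuine work — and the step I expect to be the main obstacle — is matching the Koszul signs on the nose: the signs in the definition of $b$ on $\HC_\bullet(A)$, the dualization signs $(-1)^{|\eta|}$ etc. in the definition of $b$ on $\HC^\bullet(A,B)$, the sign $(-1)^{|\psi|(\cdots)}$ in the cup product, and the suspension signs relating $\eta_k$ to $\varphi_k$ must all conspire to reproduce the standard $\A_\infty$-morphism signs. The cleanest way to manage this is to push everything through the suspension once and for all — i.e. work with $\varphi$ as a coalgebra map $\B(\s A)\to\B(\s B_\beta)$ of tensor coalgebras — so that the $\A_\infty$-relations become the single equation $D\circ\varphi = \varphi\circ D$ and the Hochschild MC equation becomes a manifestly equivalent statement about a coderivation squaring to zero. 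I would also note that unitality of $\varphi$ is automatic here because the bar complex is built on the augmentation ideal $\bar A$, so no extra condition is needed. Finally, the correspondence $\eta\mapsto(\eta_0,(\eta_k)_{k\ge1})$ is visibly a bijection at the level of underlying data, so once the equations are shown to match, the bijection $\MC(\HC^\bullet(A,B))\cong\mathsf{TwHom}(A,B)$ follows.
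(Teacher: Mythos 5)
Your proposal is correct and follows essentially the same route as the paper: decompose a Maurer--Cartan element of $\HC^\bullet(A,B)$ into its components on $(\s\bar{A})^{\otimes k}$, read off $\beta$ from the $k=0$ piece and the Taylor coefficients of a unital $\A_\infty$-morphism into $B_\beta$ from the $k\ge 1$ pieces, and observe that the correspondence is visibly bijective on underlying data. The paper likewise leaves the sign verification as ``a standard computation,'' so your extra remarks on where the twist $[\beta,-]$ arises and on handling signs via the suspension are, if anything, slightly more explicit than the original.
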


\begin{proof}
 Let $\mu$ be a Maurer-Cartan element of $\HC^\bullet(A,B)$.
 We denote by $\mu_{(k)}$ the restriction of $\mu$ to $(\s \bar{A})^{\otimes k}$, i.e.
 $$ \mu = \sum_{k\ge 0} \mu_{(k)}, \quad \mu_{(k)}: (\s \bar{A})^{\otimes k} \to B.$$
 
 The element $\beta:=\mu_0(1) \in B^1$ satisfies the Maurer-Cartan equation. Then we set $\varphi:=\{ \varphi_k\}_{k\geq 1}$, where:
 \[ \varphi_k: (\s A)^{\otimes k}\rightarrow \s B,\]
 is defined as follows:
  \begin{itemize}
   \item For $k=1$, $\varphi_{(1)}(a)=\s(\mu_1(a))$ for $a \in \s\bar{A}$ and $\varphi_1(\s1)=\s 1$.
   \item For $k>1$, $\varphi_{(k)}:=\s\circ \mu_k \circ p$ where 
   $p:(\s A)^{\otimes k} \to (\s \bar{A})^{\otimes k}$, is the projection map that uses the identification $\bar{A} \cong A / \mathbb{R}$.
  \end{itemize}
 It is a standard computation that $\varphi: A \rightarrow B_\beta$ is an $\mathsf{A}_\infty$-morphism. Since $\mu$ can be recovered from the pair $(\beta,\varphi)$ we conclude that the assignment:
 \[ \mu \mapsto (\beta, \varphi),\]
 is bijective.
  
\end{proof}

\begin{definition}
We denote by $\MC_+(\HC^\bullet(A,B))$ those Maurer-Cartan elements of $(\HC^\bullet(A,B),b,\cup)$
whose restriction to $\mathbb{R} = (\s \bar{A})^{\otimes 0} \subset \HC_\bullet(A)$ is zero.
\end{definition}

\begin{corollary}
There is a one-to-one correspondence between elements of $\MC_+(\HC^\bullet(A,B))$
and $\Ainfty$-morphisms from $A$ to $B$.
\end{corollary}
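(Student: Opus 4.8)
The plan is to deduce the corollary directly from the preceding lemma by restricting the bijection $\MC(\HC^\bullet(A,B)) \cong \mathsf{TwHom}(A,B)$ to the subset $\MC_+(\HC^\bullet(A,B))$ and identifying the corresponding subset on the right-hand side. Recall from the proof of the lemma that a Maurer-Cartan element $\mu$ corresponds to the pair $(\beta,\varphi)$ with $\beta = \mu_0(1) = \mu_{(0)}(1) \in B^1$. The extra condition defining $\MC_+$ is precisely that the component $\mu_{(0)}: \mathbb{R} = (\s\bar{A})^{\otimes 0} \to B$ vanishes, i.e. that $\beta = 0$. So under the bijection of the lemma, $\MC_+(\HC^\bullet(A,B))$ maps onto those pairs $(\beta,\varphi)$ with $\beta = 0$; but when $\beta = 0$ the twisted differential $d + [\beta,-]$ is just $d$, so $B_\beta = B$ and $\varphi$ is simply a unital $\A_\infty$-morphism $A \to B$. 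Conversely every unital $\A_\infty$-morphism $\varphi: A \to B$ gives the pair $(0,\varphi) \in \mathsf{TwHom}(A,B)$, whose associated Maurer-Cartan element has vanishing $0$-th component and hence lies in $\MC_+$.

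Concretely, I would write: by the lemma, the assignment $\mu \mapsto (\beta,\varphi)$ is a bijection $\MC(\HC^\bullet(A,B)) \to \mathsf{TwHom}(A,B)$, and by construction $\beta = \mu_{(0)}(1)$, so $\mu$ restricts to zero on $(\s\bar{A})^{\otimes 0}$ if and only if $\beta = 0$. The subset of $\mathsf{TwHom}(A,B)$ consisting of pairs $(0,\varphi)$ is in evident bijection with the set of unital $\A_\infty$-morphisms $A \to B$, since $B_0 = B$. Composing the two bijections gives the desired one-to-one correspondence between $\MC_+(\HC^\bullet(A,B))$ and $\A_\infty$-morphisms from $A$ to $B$.

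There is essentially no obstacle here — the corollary is an immediate specialization of the lemma, and the only point requiring a line of justification is the identification of the condition ``$\mu$ restricts to zero on $\mathbb{R}$'' with ``$\beta = 0$'', which is transparent from the formula $\beta = \mu_{(0)}(1)$ in the proof of the lemma. If one wanted to be slightly more careful, one could note that the Maurer-Cartan equation for $\mu$ in $(\HC^\bullet(A,B), b, \cup)$, when $\mu_{(0)} = 0$, becomes exactly the system of equations expressing that $\varphi = \{\varphi_k\}$ is an $\A_\infty$-morphism with respect to the untwisted differential on $B$, but this is already subsumed in the statement of the lemma. Hence the proof is a two- or three-sentence argument.

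\begin{proof}
By the preceding lemma, the assignment $\mu \mapsto (\beta,\varphi)$ is a bijection from $\MC(\HC^\bullet(A,B))$ to $\mathsf{TwHom}(A,B)$, and by its construction $\beta = \mu_{(0)}(1)$. Hence $\mu$ lies in $\MC_+(\HC^\bullet(A,B))$, i.e. $\mu$ restricts to zero on $\mathbb{R} = (\s\bar{A})^{\otimes 0}$, precisely when $\beta = 0$. When $\beta = 0$ the twisted algebra $B_\beta$ coincides with $B$, so the corresponding element of $\mathsf{TwHom}(A,B)$ is just a unital $\A_\infty$-morphism $\varphi: A \to B$, and conversely every such $\varphi$ arises from the pair $(0,\varphi)$. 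Composing these bijections yields the claimed one-to-one correspondence between $\MC_+(\HC^\bullet(A,B))$ and $\A_\infty$-morphisms from $A$ to $B$.
\end{proof}
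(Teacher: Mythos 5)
Your proposal is correct and is exactly the intended argument: the paper states this corollary without proof as an immediate consequence of the preceding lemma, and the specialization you describe (restricting the bijection to $\mu_{(0)}=0$, i.e. $\beta=0$, so that $B_\beta=B$) is the only step needed. Nothing is missing.
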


\begin{definition}
Let $M$ be a dg $B$-$B'$-module and $\tau$, $\tau'$ elements of $\MC_+(\HC^\bullet(A,B))$
and $\MC_+(\HC^\bullet(A,B'))$, respectively. We can then twist the coboundary operator 
$b$ on $\HC^\bullet(A,M)$ by
$$ b_{\tau,\tau'}\varphi = b \varphi + \tau \cup \varphi  - (-1)^{|\varphi|} \varphi \cup \tau'.$$ 

\end{definition}

\begin{definition}
Let $A$ be a dg algebra. We denote by $\Rep(A)$
the dg category defined as follows.
The objects of $\Rep(A)$ are complexes $(V,\partial)$ of finite total dimension,
together with an element 
$$\tau \in \MC_+(\HC^\bullet(A,\End V)).$$
If $(V,\tau)$ and $(V',\tau')$ are objects of $\Rep(A)$ then:
$$\Hom_{\Rep(A)}((V,\tau),(V',\tau')):=\HC^\bullet(A,\Hom(V,V')),$$
with coboundary operator twisted by $\tau$ and $\tau'$, and composition given by the cup product.
\end{definition}

\begin{remark}

 In other words, objects of the dg category $\Rep(A)$ are $\Ainfty$-morphisms with domain $A$ and codomain a dg algebra
 of the form $(\End V, [\partial,-], \circ)$, with $(V,\partial)$ a complex of finite total dimension. Such morphisms are also known
 as $\Ainfty$-modules over $A$. Furthermore, the cocycles of the morphism complex $\HC^\bullet(A,\Hom(V,W))$
 in degree $0$ correspond exactly to morphisms of the $\Ainfty$-modules $V$ and $W$.
 The condition of such a cocycle to be a coboundary corresponds exactly to the notion of being null-homotopic in the sense of
 \cite{Keller}. 
 It is a standard fact that every object of $\Rep(A)$ is quasi-isomorphic to one of the form $((V,0),\tau)$, i.e. one where the differential on the complex is trivial, see \cite{Keller}. One way to prove this is via homotopy transfer, which allows one to replace a complex $(V,\partial)$ by its cohomology $H(V)$, seen as a complex with zero differential.
\end{remark}

\subsection{Flat $\mathbb{Z}$-graded connections}\label{section: Z-graded connections}

\begin{definition}
A $\mathbb{Z}$-graded vector bundle $E\to X$ is a complex vector bundle $E$ of the form
$$ E = \bigoplus_{k\in \mathbb{Z}}E_k,$$
where each of the $E_k$ is a complex vector bundle of finite rank and $E_k=0$ for $|k|>>0$. 
The space of differential forms with values in $E$ is:
\[ \Omega(X,E) := \bigoplus_{i,k}\Omega^i(X, E_k)=\bigoplus_{i,k}\Gamma(\Lambda^i(T^*X)\otimes E_k).\]
 We say that an element $\alpha \in \Omega^i(X,E_k)$ has form-degree $i$, internal degree $k$ and total degree $k+i$.
 We will be mostly interested in the total degree and write:
 \[ \Omega(X,E)^p:=\bigoplus_{i+k=p}\Omega^i(X,E_k).\]
 The vector space $\Omega(X,E)$ has the structure of a graded module over the algebra $\Omega(X)$.
\end{definition}

\begin{definition}
A $\mathbb{Z}$-graded connection $D$ on a graded vector bundle $E$ is a linear operator
$$ D: \Omega(X,E)^\bullet \to \Omega(X,E)^{\bullet +1}$$
which increases the total degree by $1$ and satisfies the Leibniz-rule
$$ D(\alpha \wedge \eta) = d\alpha \wedge \eta + (-1)^{|\alpha|}\alpha \wedge D\eta,$$
for any homogeneous  $\alpha \in \Omega^\bullet(X)$ and $\eta \in \Omega(X,E)$.
We say that  $D$ is flat if $D^2=0$.
\end{definition}

\begin{remark}
If $E$ is a trivial vector bundle  $ X\times V$, where $V$ is a graded vector space, there is a bijective correspondence 
\[\MC(\Omega(X)\otimes \End V)\leftrightarrow \{\text{flat $\mathbb{Z}$-graded connections on $E$}\}  \]
given by\[\alpha \mapsto d+[\alpha,-].\]
\end{remark}

\begin{definition}
If $X$ is a smooth manifold the dg category $\FlatZ(X)$ of flat $\mathbb{Z}$-graded connections is defined as follows. The objects of $\FlatZ(X)$ are $\mathbb{Z}$-graded vector
bundles endowed with flat $\mathbb{Z}$-graded connections. If $E,E'$ are objects of $\FlatZ(X)$ then:
\[ \Hom(E,F):=\Hom_{\Omega(X)-\mathsf{mod}}(\Omega(X,E),\Omega(X,E')) = \Omega(X,\Hom(E,E')),\]
which is a cochain complex with coboundary operator given by:
\[d\phi:=D' \circ \phi -(-1)^{|\phi|} \phi \circ D.\]
\end{definition}

For further details on the structure of these dg categories the reader may consult \cite{AS}.

\section{The bar complex and formal holonomies}\label{section: formal holonomies}

\subsection{Formal holonomies}\label{subsection: formal holonomies - bar}

The following well-known lemma -- see \cite{Algebraic-string-bracket} for instance -- explains how to associate a cochain in the completed bar complex $\hat{\barC} A$ of a dg algebra $A$
to a Maurer-Cartan element of $A$.

\begin{lemma}\label{lemma: MC}
Let $A$ be a dg algebra and $a \in \MC(A)$ a solution to the Maurer-Cartan equation.
Set \[1\otimes e^{ a}:=\sum_{k\geq 0} 1 \otimes  a^{\otimes k}\in  \hat{\barC} A,\]
where $\hat{\barC} A$ is the completion of $\barC A$, i.e.
$ \hat{\barC} A := \prod_{k \ge 0} A\otimes (\s \bar{A})^{\otimes k}$.
Then:
\[ b(1\otimes e^{ a})=0.\]
\end{lemma}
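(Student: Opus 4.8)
The plan is to verify directly that the Hochschild differential $b = b_0 + b_1$ annihilates the element $1 \otimes e^a := \sum_{k \geq 0} 1 \otimes a^{\otimes k}$, degree by degree in the tensor length. Since everything is expressed as a sum of homogeneous tensors, I would split $b(1 \otimes e^a)$ into the contribution $b_0(1 \otimes e^a)$ coming from the internal differential $d$ and the contribution $b_1(1 \otimes e^a)$ coming from the multiplications, collect the terms landing in $A \otimes (\s\bar A)^{\otimes n}$, and show they cancel.

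First I would compute $b_0(1 \otimes a^{\otimes (n+1)})$. Because the first slot is the unit $1$ with $d1 = 0$, only the terms $1 \otimes \cdots \otimes da_i \otimes \cdots$ survive; using $da = -a^2$ (the Maurer-Cartan equation, rewritten with the sign conventions of the paper, so that $da + a \cdot a = 0$) each such term becomes, up to sign, $1 \otimes a^{\otimes(i-1)} \otimes a^2 \otimes a^{\otimes(n-i+1)}$, i.e. an element of $A \otimes (\s\bar A)^{\otimes n}$ with an $a^2$ in some slot. Next I would compute $b_1(1 \otimes a^{\otimes(n+2)})$, which lands in $A \otimes (\s\bar A)^{\otimes n}$: the first term $(\pm) 1 \cdot a \otimes a^{\otimes(n+1)} = \pm a \otimes a^{\otimes(n+1)}$ and the last term $\pm a \cdot 1 \otimes a^{\otimes n} = \pm a \otimes a^{\otimes n}$ together with the middle telescoping terms $\pm 1 \otimes \cdots \otimes (a a) \otimes \cdots$. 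The strategy is then to match the $a^2$-in-slot-$i$ terms produced by $b_1$ on $1 \otimes a^{\otimes(n+2)}$ against those produced by $b_0$ on $1 \otimes a^{\otimes(n+1)}$, and to observe that the two "boundary" contributions of $b_1$ (the $a$ multiplied into the coefficient slot from the left and the cyclic $a$ from the right) also cancel, once one accounts for the sign coming from the Koszul rule and the $\mathsf{s}$-shift.

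The main obstacle is purely bookkeeping: getting the signs to line up. The differentials $b_0, b_1$ in the paper carry elaborate signs built from the degrees $|m| + |a_1| + \cdots$ shifted by the tensor positions, and since $a \in A^1$ has $|a| = 1$, every $\s a$ has degree $0$, which simplifies the Koszul signs considerably but one must still track the position-dependent $(-1)^i$ and $(-1)^{-i+1}$ factors and the cyclic sign in the last term of $b_1$. I would organize the computation so that for fixed $n$ I write $b_0$ applied to the length-$(n+1)$ piece and $b_1$ applied to the length-$(n+2)$ piece as two explicit sums indexed by the slot into which $a^2$ (resp. $aa$) is inserted, plus the leftmost/rightmost boundary terms, and check term-by-term that the signs are opposite. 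A clean way to sidestep part of the sign juggling is to note that $1 \otimes e^a$ is group-like for the coproduct $\Delta$ and that $b$ is a derivation-like compatible with $\Delta$; however, the most transparent route for a short proof is still the direct index computation, citing \cite{Algebraic-string-bracket} for the sign conventions. So the proof is: expand, use $da = -a\cdot a$, and match the insertion terms of $b_1$ at length $n+2$ with those of $b_0$ at length $n+1$ while checking the two leftover boundary terms of $b_1$ cancel each other.
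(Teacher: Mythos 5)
Your proposal is correct and follows essentially the same route as the paper: expand $b=b_0+b_1$ termwise, use $da+a\cdot a=0$ to cancel the $da$-insertions from $b_0$ at tensor length $k$ against the $a\cdot a$-insertions from $b_1$ at length $k+1$, and observe that the two boundary terms of $b_1$ (the left multiplication into the coefficient slot and the cyclic term) cancel each other since $|a|=1$ makes both equal to $\mp a\otimes a^{\otimes k}$. The paper's proof is exactly this computation, written so tersely that the cancelling boundary terms are not even displayed; your only slip is an immaterial off-by-one in labelling the tensor length of the target component.
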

\begin{proof}
Let us compute:
\begin{eqnarray*}
b (\sum_{k\geq 0} 1\otimes a^{\otimes k})&=&  -\sum_{k \geq 0} \left(
\sum_{i=1}^k  1\otimes \cdots \otimes da \otimes \cdots \otimes a 
+\sum_{i=1}^{k-1}1 \otimes \cdots \otimes a \cdot \alpha \otimes \cdots \otimes a \right)\\
&=&- \sum_{k \geq 0} \left(\sum_{i=1}^k  1\otimes \cdots \otimes (da+\alpha\cdot a) \otimes \cdots \otimes a \right)=0.
\end{eqnarray*}

\end{proof}

\begin{proposition}\label{proposition: formal holonomies}
 Let $A$ be a commutative dg algebra and $K$ a graded algebra.
The linear map
 \begin{eqnarray*}
 \phi_K: \hat{\barC}(A\otimes K) & \to & \hat{\barC} A\otimes K,\\
 a_0\otimes k_0 \otimes \cdots \otimes  a_r\otimes k_r &\mapsto & (-1)^{\sum_{i<j} |k_i|(|a_j|+1)}a_0\otimes  a_1 \otimes \cdots \otimes  a_r\otimes k_0\cdots k_r
 \end{eqnarray*}
has the following properties:
\begin{enumerate}
\item If $i: A\otimes K \hookrightarrow \hat{\barC}(A\otimes K)$ and $j: A\otimes K \hookrightarrow \hat{\barC} A\otimes K$
 are the natural inclusions, then:
\[ \phi_K \circ \iota =j.\]
 \item For any $\beta \in A\otimes K$ of degree $1$ the identity
 $$ \phi_Kb(1\otimes e^{ \beta}) = b\phi_K(1\otimes e^{\beta}) + [j(\beta),\phi_{K}(1\otimes e^{\beta})]$$
 holds.
\end{enumerate}

\end{proposition}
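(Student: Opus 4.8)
The plan is to verify both statements by direct computation, exploiting the explicit formula for $\phi_K$ and the fact that in the bar complex only the two differentials $b_0$ (internal differential) and $b_1$ (bar/multiplication differential) appear (there is no outer bimodule term for $\barC A$ beyond the one coming from multiplying into the zeroth slot, and on $1\otimes e^\beta$ the zeroth slot is the unit).

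\emph{Statement (1).} This is immediate: an element of $A\otimes K$ sits in bar-degree $0$, i.e. it is of the form $a_0\otimes k_0$ with no further tensor factors. The sign $\sum_{i<j}|k_i|(|a_j|+1)$ is an empty sum, so $\phi_K(a_0\otimes k_0)=a_0\otimes k_0$, which is exactly $j(a_0\otimes k_0)$. So first I would simply observe that $\phi_K\circ\iota$ and $j$ agree on the (spanning set of) degree-zero bar elements.

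\emph{Statement (2).} Here I would first record what the two sides are. Writing $\beta\in(A\otimes K)^1$, Lemma \ref{lemma: MC} is \emph{not} applicable since $\beta$ need not be Maurer--Cartan, but the same computation that proves it shows
\[
b(1\otimes e^\beta) = -\sum_{k\ge 0}\sum_{i=1}^{k}1\otimes\cdots\otimes(d\beta+\beta\cdot\beta)\otimes\cdots\otimes\beta,
\]
where the distinguished factor $d\beta+\beta\cdot\beta$ sits in position $i$. Set $\delta:=d\beta+\beta\cdot\beta\in(A\otimes K)^2$; then $b(1\otimes e^\beta)=-\sum_{k}\sum_{i}1\otimes\beta^{\otimes(i-1)}\otimes\delta\otimes\beta^{\otimes(k-i)}$. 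Now apply $\phi_K$ termwise. Each such term maps, up to a Koszul sign collecting the $K$-degrees past the $A$-degrees, to $1\otimes\beta_A^{\otimes(i-1)}\otimes\delta_A\otimes\beta_A^{\otimes(k-i)}\otimes(\beta_K^{i-1}\,\delta_K\,\beta_K^{k-i})$, where $\beta=\beta_A\otimes\beta_K$ schematically and I suppress that $\beta$ and $\delta$ are really sums of such simple tensors (the computation is bilinear, so it suffices to treat simple tensors and sum). Since $A$ is graded commutative, the $K$-part $\beta_K^{i-1}\delta_K\beta_K^{k-i}$ can be moved to one side, and after reindexing the result is recognized as $b\big(\phi_K(1\otimes e^\beta)\big)$ plus a correction coming precisely from the first factor $\delta_A\otimes\delta_K$ when $i=1$, i.e. when $\delta$ is multiplied into the zeroth slot of $\barC A$. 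That correction is the commutator term: on $\hat\barC A\otimes K$, the operator $\phi_K(1\otimes e^\beta)\mapsto [j(\beta),\phi_K(1\otimes e^\beta)]$ contributes $j(\beta)\cdot(1\otimes\beta_A^{\otimes k})\otimes(\text{$K$-part}) \mp (1\otimes\beta_A^{\otimes k})\cdot j(\beta)$, and since $j(\beta)\in A\otimes K\subset\hat\barC A\otimes K$ acts by left/right multiplication into the zeroth $A$-slot (tensoring the $K$-factors), expanding $d\beta+\beta\cdot\beta$ and comparing term by term with $b\phi_K(1\otimes e^\beta)$ produces exactly the missing pieces. Concretely I would: (a) compute $b\phi_K(1\otimes e^\beta)$ directly from the formulas for $b_0$ and $b_1$ applied to $1\otimes\beta_A^{\otimes k}\otimes(\beta_K^{k})$, isolating the term where $d$ hits the unit (which vanishes) and where the product $1\cdot\beta_A$ is formed (the $b_1$ "outer-left" term) and where $\beta_A\cdot\beta_A$ is formed in interior slots; (b) compute $[j(\beta),\phi_K(1\otimes e^\beta)]$ directly; (c) add (a) and (b) and check it equals $\phi_K$ applied to the formula for $b(1\otimes e^\beta)$ above.

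\emph{Main obstacle.} The only real difficulty is bookkeeping of Koszul signs: one must check that the sign $(-1)^{\sum_{i<j}|k_i|(|a_j|+1)}$ in the definition of $\phi_K$ is exactly compatible with the signs appearing in $b_0$, $b_1$ and in the graded commutator, so that nothing is off by a sign after the dust settles. I would handle this by the standard device of treating $\s$-suspended elements (so all interior degrees are shifted by $1$, turning $|a_j|+1$ into the suspended degree $|a_j|_{\s}$) and verifying the identity on simple tensors $\beta=a\otimes k$, $d\beta+\beta^2 = (da)\otimes k + (-1)^{|a||k|}(a\otimes k)(a\otimes k)$, where the sign rule for the product on $A\otimes K$ must be used consistently. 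Given that $A$ is graded commutative, no reordering sign is lost when the $K$-factors are collected, and the computation closes. I expect this to be a routine but somewhat lengthy sign verification, of the same flavor as the proof of Lemma \ref{lemma: MC}.
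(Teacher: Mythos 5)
Your overall strategy -- verify (1) on bar-degree-zero elements and prove (2) by a term-by-term comparison of $\phi_K b(1\otimes e^\beta)$ with $b\phi_K(1\otimes e^\beta)$ -- is the same direct computation the paper carries out, and the formula you record for $b(1\otimes e^\beta)$ in $\hat{\barC}(A\otimes K)$ is correct (the two outer $b_1$-terms cancel there because $|\beta|=1$). However, your localization of where the commutator correction comes from is wrong, and the term of the differential that actually produces it is missing from your checklist. You attribute the correction to ``the first factor $\delta_A\otimes\delta_K$ when $i=1$, i.e.\ when $\delta$ is multiplied into the zeroth slot of $\barC A$.'' But $\delta=d\beta+\beta\cdot\beta$ is never multiplied into the zeroth slot: in $b(1\otimes e^\beta)$ it occupies only interior slots, and after applying $\phi_K$ these terms match the interior $b_1$-terms of $b\phi_K(1\otimes e^\beta)$ exactly, because in both expressions the $K$-factors are multiplied in the same (original) order. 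The genuine discrepancy comes from the two $b_1$-terms on $\hat{\barC}A$ that multiply a copy of $\beta_A$ (not $\delta$) into the zeroth slot: the outer-left term $1\cdot a_{i_1}\otimes a_{i_2}\otimes\cdots\otimes a_{i_r}$ and the cyclic wrap-around term $a_{i_r}\cdot 1\otimes a_{i_1}\otimes\cdots\otimes a_{i_{r-1}}$. Their counterparts upstairs cancelled each other, but downstairs they survive and do not cancel, because the $K$-product $k_{i_1}\cdots k_{i_r}$ was already formed in the original order while the $A$-factors get cyclically permuted; summing over all multi-indices, the outer-left terms reassemble (after relabelling) into $j(\beta)\cdot\phi_K(1\otimes e^\beta)$ and the wrap-around terms into $\pm\,\phi_K(1\otimes e^\beta)\cdot j(\beta)$, which is the graded commutator. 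In step (a) of your plan you list only the outer-left term ``where the product $1\cdot\beta_A$ is formed'' and omit the wrap-around term entirely; carried out as written, your computation would produce only half of the commutator and would not close.

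For comparison, the paper organizes the same computation slightly differently: it factors $\phi_K=(\id\otimes m)\circ\kappa$ through $\hat{\barC}A\otimes\mathsf{T}K$, where $\kappa$ is the canonical isomorphism and $m$ is multiplication in $K$, and compares $\kappa b_1(1\otimes e^\beta)$ with $b_1\kappa(1\otimes e^\beta)$ before cancelling anything. In that grouping the outer-left and interior terms agree on the nose, and the entire discrepancy is concentrated in the wrap-around terms, which differ only by the Koszul sign $(-1)^{|k_{i_r}|(|k_{i_1}|+\cdots+|k_{i_{r-1}}|)}$ of moving $k_{i_r}$ to the front of the $K$-product; their difference is then read off directly as $[j(\beta),\phi_K(1\otimes e^\beta)]$. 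Either bookkeeping works, but you should redo your step (a) including the wrap-around term and re-derive the commutator from the two zeroth-slot multiplications of $\beta_A$, not from $\delta$.
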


\begin{proof}
 The first assertion is a simple check, so we pass to item (2).
 To obtain an expression for $\phi_K \circ b(1\otimes e^{\beta})$,
 we observe that $\phi_K$ factors as follows:
 $$
  \xymatrix{ \hat{\barC}(A\otimes K) \ar[r]^{\kappa} & \hat{\barC} A\otimes \mathsf{T}K \ar[r]^{\mathrm{id}\otimes m} & \hat{\barC} A\otimes K,}
  $$
  where $\mathsf{T}K$ is the tensor algebra on $K$, the first map $\kappa$ is the canonical isomorphism of graded vector spaces and the second map acts by multiplication
  $m(k_0\otimes \cdots \otimes k_r):=k_0\cdots k_r$.
 
  Our aim is to compute
  $$   \kappa b(1\otimes e^{ \beta}) - b\kappa(1\otimes e^{\beta}).$$
  Since $\kappa$ commutes with the operator $b_0$, we can replace $b$ by $b_1$ above.
  
  Writing $\beta = \sum_{j=1}^ma_j\otimes k_j$, we obtain that
  $\kappa b_1(1\otimes e^{ \beta})$ is the sum over $i_1,\dots, i_r \in \{ 1,\dots, m\}$ of the expression:
  \begin{eqnarray*}
   &&(-1)^{\tau+1}(a_{i_1}\otimes  a_{i_2}\otimes \cdots \otimes  a_{i_r}) \otimes (k_{i_1}\otimes \cdots \otimes k_{i_r}) +\\
    && \sum_{l=1}^{r-1}(-1)^{\tau+|k_{i_1}|+\cdots+|k_{i_l}|+1} (1\otimes  a_{i_1}\otimes \cdots \otimes a_{i_l}a_{i_{l+1}} \otimes \cdots \otimes  a_{i_r})\otimes (1\otimes k_{i_1}\otimes \cdots \otimes k_{i_l}k_{i_{l+1}}\otimes \cdots \otimes k_{i_r}) +\\
   && (-1)^{\tau}(a_{i_r}\otimes a_{i_1}\otimes \cdots \otimes  a_{i_{r-1}})\otimes (k_{i_r}\otimes k_{i_1}\otimes \cdots \otimes k_{i_{r-1}}),
  \end{eqnarray*}
  
 where $\tau:= \sum_{a<b} |k_{i_a}||k_{i_b}|$.  
 
 On the other hand, $b_1 \kappa(1\otimes e^{\beta})$ is the sum over $i_1,\dots, i_r \in \{ 1,\dots, m\}$ of the terms:
   \begin{eqnarray*}
   &&(-1)^{\tau+1 }(a_{i_1}\otimes  a_{i_2}\otimes \cdots \otimes  a_{i_r}) \otimes (1\otimes k_{i_1}\otimes \cdots \otimes k_{i_r}) +\\
    &&\sum_{l=1}^{r-1} (-1)^{\tau+|k_{i_1}|+ \cdots +|k_{i_l}|+1}(1\otimes  a_{i_1}\otimes \cdots \otimes a_{i_l}a_{i_{l+1}} \otimes \cdots \otimes  a_{i_r})\otimes (1\otimes k_{i_1}\otimes \cdots \otimes k_{i_l}\otimes k_{i_{l+1}}\otimes \cdots \otimes k_{i_r}) +\\
   &&(-1)^{\tau+|k_{i_r}|(|k_{i_1}|+\cdots + |k_{i_{r-1}}|)} (a_{i_r}\otimes a_{i_1}\otimes \cdots \otimes  a_{i_{r-1}})\otimes ( k_{i_1}\otimes  \cdots  \otimes k_{i_r}) .
  \end{eqnarray*}
  
 We conclude that the difference \[\phi_K b(1\otimes e^{ \beta}) -b \phi_K(1\otimes e^{\beta})\]    is the sum over all indices of the following terms:
  \begin{eqnarray*}
  (-1)^{\tau}(a_{i_r}\otimes a_{i_1}\otimes \cdots \otimes  a_{i_{r-1}})\otimes (k_{i_r} k_{i_1} \cdots k_{i_{r-1}})-\\
  (-1)^{\tau+|k_{i_r}|(|k_{i_1}|+\cdots + |k_{i_{r-1}}|)} (a_{i_r}\otimes a_{i_1}\otimes \cdots \otimes  a_{i_{r-1}})\otimes ( k_{i_1} \cdots  k_{i_r})   . \end{eqnarray*}
  This sum is precisely $[j(\beta), \phi_K(1\otimes e^{ \beta})]$.\end{proof}

\begin{corollary}\label{corollary: formal holonomies}
Let $A$ and $B$ be commutative dg algebras
and
$$ \varphi: \hat{\barC} A\to B$$
a morphism of dg algebras. 
Given a graded algebra $K$, we consider the linear map
 \[ \varphi_K: \hat{\barC}(A\otimes K) \rightarrow B\hat{\otimes} K\]
 defined as the composition:
 \[ \varphi_K:=    (\varphi \otimes \id) \circ \phi_K.\]
 For any element $\beta \in A\otimes K$ of degree $1$, the identity
 $$ \varphi_K b(1\otimes e^{ \beta}) = d\varphi_K(1\otimes e^{\beta}) + [(\varphi\otimes \mathrm{id})(\beta),\varphi_{K}(1\otimes e^{\beta})]$$
 holds.
 In particular, if $\beta$ is a Maurer-Cartan element of $A\otimes K$, then
 $$  d\varphi_K(1\otimes e^{\beta}) + [(\varphi\otimes \mathrm{id})(j(\beta)),\varphi_{K}(1\otimes e^{\beta})] = 0.$$
\end{corollary}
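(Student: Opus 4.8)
The plan is to deduce the corollary directly from Proposition \ref{proposition: formal holonomies} by applying the dg algebra morphism $\varphi\otimes\id$ to both sides of identity (2) in that proposition. First I would observe that $\varphi_K = (\varphi\otimes\id)\circ\phi_K$ is a composition of the linear map $\phi_K\colon\hat{\barC}(A\otimes K)\to\hat{\barC}A\otimes K$ from the proposition with the map $\varphi\otimes\id\colon\hat{\barC}A\otimes K\to B\hat\otimes K$, which is a morphism of dg algebras because $\varphi$ is one and $K$ contributes no differential. In particular $\varphi\otimes\id$ commutes with the respective differentials (sending $b\otimes\id$ on the source to $d\otimes\id$, i.e. the internal differential $d$ on $B\hat\otimes K$) and is multiplicative, hence preserves graded commutators.

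The key step is the computation: starting from
$$ \phi_K b(1\otimes e^{\beta}) = b\phi_K(1\otimes e^{\beta}) + [j(\beta),\phi_K(1\otimes e^{\beta})],$$
apply $\varphi\otimes\id$ to obtain
$$ (\varphi\otimes\id)\phi_K b(1\otimes e^{\beta}) = (\varphi\otimes\id)b\phi_K(1\otimes e^{\beta}) + (\varphi\otimes\id)[j(\beta),\phi_K(1\otimes e^{\beta})].$$
The left-hand side is $\varphi_K b(1\otimes e^{\beta})$ by definition of $\varphi_K$. On the right, $(\varphi\otimes\id)\circ b = d\circ(\varphi\otimes\id)$ since $\varphi$ is a chain map and the differential on $\hat{\barC}A\otimes K$ is $b\otimes\id$ while that on $B\hat\otimes K$ is $d\otimes\id$; this turns the first term into $d\varphi_K(1\otimes e^{\beta})$. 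For the second term, multiplicativity of $\varphi\otimes\id$ gives $(\varphi\otimes\id)[x,y] = [(\varphi\otimes\id)x,(\varphi\otimes\id)y]$, and by property (1) of the proposition $\phi_K\circ\iota = j$, so $(\varphi\otimes\id)(j(\beta)) = (\varphi\otimes\id)(\phi_K(\iota(\beta)))$. Since $\varphi$ restricted along the inclusion $i\colon A\hookrightarrow\hat{\barC}A$ is a morphism of unital algebras, $(\varphi\otimes\id)(j(\beta)) = (\varphi\otimes\id)(\beta)$ viewing $\beta\in A\otimes K$ as mapped into $B\hat\otimes K$; this is exactly the term $[(\varphi\otimes\id)(\beta),\varphi_K(1\otimes e^{\beta})]$ in the claimed identity. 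Combining these three observations yields the first displayed equation of the corollary.

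The ``in particular'' clause then follows immediately: if $\beta\in\MC(A\otimes K)$, then $1\otimes e^{\beta}$ is a cocycle in $\hat{\barC}(A\otimes K)$ by Lemma \ref{lemma: MC}, so $b(1\otimes e^{\beta}) = 0$, hence the left-hand side $\varphi_K b(1\otimes e^{\beta})$ vanishes and we are left with $d\varphi_K(1\otimes e^{\beta}) + [(\varphi\otimes\id)(j(\beta)),\varphi_K(1\otimes e^{\beta})] = 0$. I do not anticipate a genuine obstacle here; the only point requiring a little care is bookkeeping the differential on the tensor product $B\hat\otimes K$ and checking that $\varphi\otimes\id$ is strictly a dg algebra map (so that signs match), together with verifying that completions are respected by all maps involved so that the infinite sum $1\otimes e^{\beta}$ and its image are well-defined.
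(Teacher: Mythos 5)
Your proposal is correct and follows exactly the paper's argument: the paper's proof is the one-line observation that the identity is a formal consequence of Proposition \ref{proposition: formal holonomies} because $\varphi\otimes\id$ is a morphism of dg algebras, which is precisely the computation you spell out, and the ``in particular'' clause via Lemma \ref{lemma: MC} is likewise what the paper intends.
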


\begin{proof}
This is a formal consequence of Propositon \ref{proposition: formal holonomies} because $ (\varphi \otimes \mathrm{id})$ is a morphism of dg algebras.
\end{proof}

\subsection{Formal gauge-invariance}\label{subsection: formal holonomies - normalized bar}

\begin{remark} \label{gauge action}
Let $A$ be a dg algebra and denote by $U(A)$ the group of invertible elements of degree zero.
The group $U(A)$ acts on the right on $A$ by the formula
\[ x \bullet g:= g^{-1}x g+g^{-1}dg.\]
Moreover, this action restricts to the Maurer-Cartan set $\MC(A)$.

We want to understand the behaviour of the gauge-action under the assignment
\[
 (A\otimes K)^1  \to  \hat{\barC} A\otimes K, \qquad \beta \mapsto \phi_E(1\otimes e^{ \beta}).
\]
It turns out that one needs to pass to a certain quotient complex of $\barC A$ introduced by Chen
in his study of the algebraic topology of the loop space, see \cite{Chen}.
Chen studied this quotient in more detail in \cite{Chen_reduced-bar},
we follow the exposition in \cite{GJP}.
\end{remark}

\begin{definition}\label{definition: operators R and S}
Let $A$ be a dg algebra concentrated in non-negative degrees.
Given $f\in A^0$, and $i\ge 1$, we define operators $S_i(f)$ and $R_i(f)$ on $\barC A$ as follows:
$$S_i(f)(a_0\otimes a_1\otimes \cdots \otimes  a_k) := a_0\otimes  a_1 \otimes \cdots \otimes  a_{i-1} \otimes  f \otimes  a_{i} \otimes \cdots \otimes  a_k$$
and define $R_i(f)$ to be the commutator  $[b ,S_i(f)]$.
\end{definition}

\begin{definition}\label{definition: reduced bar}
Let $A$ be a dg algebra concentrated in non-negative degrees.
The subcomplex of degenerate chains of $\barC A$, denoted by $\Deg(A)$, is the linear span of the images
of $S_i(f)$ and $R_i(f)$, for all $f\in A^0$ and all $i\ge 1$.
The normalized Hochschild complex $\N(A)$ of $A$ is the quotient complex
$$ \N(A) = \barC A / \Deg(A).$$
\end{definition}

%\begin{remark}
%\hspace{0cm}
%\begin{enumerate}
%\item
In the case where $A=\Omega^\bullet(X)$ is the algebra of differential forms on a manifold, passing to the quotient complex $\N(\Omega^\bullet(X))$
is natural, since $\Deg(\Omega^\bullet(M))$ coincides with the kernel of Chen's iterated integral map $\sigma$ which maps $\barC \Omega^\bullet (X)$ to $\Omega^\bullet(\loop X)$.
In \cite{Chen}, Chen shows that if $A$ is a dg algebra 
concentrated in non-negative degrees such that $H^0(A)=\mathbb{R}$, $\Deg(\Omega^\bullet(X))$ is acyclic.
 If $A$ is assumed to be graded commutative, $\Deg(A)$ is an ideal
with respect to the shuffle product, hence $\N(A)$ has the structure of a
commutative dg algebra. 

We will denote by $\hat{\N}(A)$ the space
\[ \hat{\N}(A):= \hat{\barC}(A)/\hat{\Deg}(A),\]
where $\hat{\Deg}(A)$ is the completion of $\Deg(A)$.
In case $A$ is graded commutative, the shuffle product extends in a natural way to give $\hat{\N}(A)$
the structure of a commutative dg algebra.
%\end{remark}

\begin{lemma}\label{conjugation}
 Let $A$ be a commutative dg algebra 
 and $K$ a graded algebra. For any $g \in A^0 \otimes K^0$ and any
  \[X=1 \otimes (a_1 \otimes k_1)\otimes \cdots \otimes (a_n 
 \otimes k_n) \in \mathbb{B}(A \otimes K)\] the following equalities hold in $\N(A) \otimes K$:
 \begin{eqnarray*}
 \phi_K( 1\otimes g(a_1\otimes k_1) \otimes \cdots \otimes (a_n \otimes k_n))-\phi_K(1 \otimes dg\otimes (a_1\otimes k_1) \otimes \cdots \otimes (a_n \otimes k_n)) &=& g \phi_K(X)\\
  \phi_K( 1\otimes (a_1\otimes k_1) \otimes \cdots \otimes (a_n \otimes k_n)g)+\phi_K(1 \otimes (a_1\otimes k_1) \otimes \cdots \otimes (a_n \otimes k_n)\otimes dg) &=&  \phi_K(X)g.  
  \end{eqnarray*}
  \end{lemma}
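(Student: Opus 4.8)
The plan is to reduce both identities to the analogous statements at the level of the tensor-algebra-valued map $\kappa$, and then project down to $\N(A)\otimes K$. First I would recall that $\phi_K = (\id\otimes m)\circ\kappa$, where $\kappa\colon \hat{\barC}(A\otimes K)\to \hat{\barC}A\otimes \mathsf{T}K$ is the canonical isomorphism of graded vector spaces and $m\colon \mathsf{T}K\to K$ is multiplication. On $\hat{\barC}A\otimes \mathsf{T}K$ the two equalities are essentially tautological: inserting $g\in A^0$ in the first slot of the bar part of $X$ (with the appropriate sign) is, after applying $\kappa$, the operator $S_1$ on the $\hat{\barC}A$ factor together with left multiplication by $g$ on the $\mathsf{T}K$ factor, while inserting $dg$ in the first slot corresponds — since $b_0$ acts by the de Rham differential and $\kappa$ commutes with $b_0$ — to the ``$b_0$-part'' of $R_1$ on the $\hat{\barC}A$ factor. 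The whole point is that $S_1(g)(\text{bar part})$ and $R_1(g)(\text{bar part})$ together span degenerate chains, so modulo $\hat{\Deg}(A)$ the combination collapses, and what remains is exactly $g\cdot\phi_K(X)$ on the nose. The key bookkeeping is to check that the Koszul sign $(-1)^{\sum_{i<j}|k_i|(|a_j|+1)}$ appearing in the definition of $\phi_K$ interacts correctly: since $g$ has internal $K$-degree $0$ and form-degree $0$, it contributes no sign when it is moved past the $a_j$'s, so the sign prefactors on the $g(a_1\otimes k_1)\otimes\cdots$ term and on $\phi_K(X)$ match, and similarly the $dg$ term carries precisely the sign needed to read off as an $R_1$-type contribution.

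More concretely, for the first identity I would write $\phi_K(1\otimes g(a_1\otimes k_1)\otimes\cdots\otimes(a_n\otimes k_n))$ out explicitly using the definition of the product in $A\otimes K$ (so $g\cdot(a_1\otimes k_1) = ga_1\otimes k_1$ since $g\in A^0\otimes K^0$), obtaining $g a_1\otimes a_2\otimes\cdots\otimes a_n$ on the bar side times $k_1\cdots k_n$ on the $K$ side, with the Koszul sign unchanged because $|g|=0$. This equals $S_1(g)$ applied to the bar part of $\phi_K(X)$, except that $\phi_K(X)$ has a leading $1$ which $S_1(g)$ would turn into $g$; concretely it is $g\otimes$ the bar part only after one notes that $1\otimes g a_1\otimes\cdots = S_1(g)(1\otimes a_1\otimes\cdots)$ differs from $g\cdot(1\otimes a_1\otimes\cdots)$ precisely by $1\otimes dg\otimes a_1\otimes\cdots$ modulo $b$-exact/degenerate terms — this is the standard identity $S_1(g)(Y) - (1\otimes dg\otimes \cdots) \equiv g\cdot Y$ in $\N(A)$, which is exactly Chen's relation defining the degenerate subcomplex. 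Tensoring through with $K$ and applying $m$ gives the claim. The second identity is the mirror image: inserting $g$ or $dg$ in the last slot, with the relevant sign being the one that appears in $b_1$'s final term, and using the ``$S_n$/$R_n$'' version of Chen's relation together with right multiplication by $g$ on $\mathsf{T}K$.

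The main obstacle I expect is sign-chasing: one must verify that (a) the Koszul prefactor in $\phi_K$ is genuinely insensitive to the insertion of the degree-zero element $g$ or $dg$ in either the first or last slot, and (b) the signs built into $b_0$ and $b_1$ — which enter through the definition of $\Deg(A)$ via $R_i(f)=[b,S_i(f)]$ — line up with the explicit signs $+$ and $-$ stated in the two displayed equations (note the sign of the $dg$-term flips between the two identities, matching the position being first versus last). I would handle this by first proving the purely algebraic ``conjugation'' identity for $\barC A$ itself (no $K$), namely $1\otimes g a_1\otimes\cdots - 1\otimes dg\otimes a_1\otimes\cdots \equiv g\cdot(1\otimes a_1\otimes\cdots)$ and its right-handed analogue in $\N(A)$, which is exactly the content of, e.g., \cite{GJP}; then the passage to $A\otimes K$ is a formal consequence of the factorization $\phi_K=(\id\otimes m)\circ\kappa$ together with the observation that $\kappa$ intertwines left/right multiplication by $g$ in $A\otimes K$ with the diagonal action ($g$ on $A$, $g$ on the appropriate end of $\mathsf{T}K$), and $m$ is an algebra map. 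Everything else is routine.
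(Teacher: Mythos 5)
Your proposal is correct and takes essentially the same route as the paper: the paper reduces by linearity to $g=a\otimes k$, applies $\phi_K$ explicitly to simple tensors, and invokes exactly the relation $1\otimes a_1\otimes\cdots\otimes(a_n a)+1\otimes a_1\otimes\cdots\otimes a_n\otimes da=a\otimes a_1\otimes\cdots\otimes a_n$ in $\N(A)$ that you identify as Chen's degeneracy relation, with $|g|=0$ killing all Koszul signs just as you observe. The only quibble is notational --- $S_1(g)(1\otimes a_1\otimes\cdots)$ equals $1\otimes g\otimes a_1\otimes\cdots$ rather than $1\otimes(ga_1)\otimes\cdots$ --- but the identity you actually invoke immediately afterwards is the correct one, so nothing in your argument is affected.
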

\begin{proof}
We will prove only the second equation, the first one is analogous. Since both sides are linear in $g$ we may assume that $g= a\otimes k$. Let us set 
\[ \epsilon=\sum_{i<j} |k_i|(|a_j|+1)\]
and compute:
 \begin{eqnarray*}
  \phi_K( 1\otimes (a_1\otimes k_1) \otimes \cdots \otimes (a_n \otimes k_n)g)+\phi_K(1 \otimes (a_1\otimes k_1) \otimes \cdots \otimes (a_n \otimes k_n)\otimes dg)& =&\\ 
\phi_K(   1\otimes (a_1\otimes k_1) \otimes \cdots \otimes (a_na \otimes k_n k))+\phi_K(1 \otimes (a_1\otimes k_1) \otimes \cdots \otimes (a_n \otimes k_n)\otimes (da\otimes k)) &=& \\ 
(-1)^{\epsilon}(   1\otimes (a_1\otimes\cdots \otimes a_na) \otimes (k_1\cdots k_n k)+(-1)^{\epsilon}(1 \otimes (a_1\otimes \cdots \otimes a_n \otimes da) \otimes (k_1 \cdots k_n k) &=&\\
(-1)^\epsilon a \otimes a_1 \otimes \cdots \otimes a_n \otimes ( k_1 \cdots k_n k)= \phi_K(X) g. \end{eqnarray*}
 Note that when passing to the last line, we have used that the computation takes place in  $\N(A) \otimes K$,  and that the equation
\[ 1\otimes a_1 \otimes \cdots \otimes (a_n a) + 1 \otimes a_1 \otimes \cdots \otimes a_n \otimes da =a \otimes a_1 \otimes \cdots \otimes a_n\]
holds in $\N(A)$.
 \end{proof}

\begin{proposition}\label{proposition: gauge-invariance}
 Let $A$ be a commutative dg algebra and $K$ a graded algebra.
We consider the map
\[
 \hat{\phi}_K: (A\otimes K)^1  \to  \N(A)\hat{\otimes} K, \qquad \beta \mapsto \phi_K(1\otimes e^{\beta}),
\]
 with $\phi_K$ defined as in Proposition \ref{proposition: formal holonomies}.
 For any invertible element $ g \in A^0 \otimes K^0$ we have:
 \[ \phi_K( 1 \otimes e^{\beta \bullet g})= g^{-1} \phi_K( 1 \otimes e^\beta)  g.\]
 The expression on the right is defined via the inclusion $A \otimes K \hookrightarrow N(A) \otimes K$.

\end{proposition}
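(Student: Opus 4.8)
The plan is to reduce the statement to the infinitesimal identity established in Lemma \ref{conjugation}, by recognizing that the gauge action $\beta \mapsto \beta \bullet g$ can be encoded by a path of gauge transformations and that both sides of the desired identity are then flows of the same ODE. Concretely, I would first reduce to the case where $g$ is connected to the identity by a smooth path $g_t$ with $g_0 = 1$ and $g_1 = g$; since we only need the identity as stated for a fixed $g$, and the right-hand side $g^{-1}\phi_K(1\otimes e^\beta)g$ makes sense for any invertible $g \in A^0\otimes K^0$, it suffices to prove that the function $t \mapsto \phi_K(1\otimes e^{\beta\bullet g_t})$ and the function $t \mapsto g_t^{-1}\phi_K(1\otimes e^\beta)g_t$ agree. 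Both are valued in the (completed) commutative dg algebra $\hat{\N}(A)\hat\otimes K$, and I would differentiate each in $t$.

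The key computation is the derivative of the left-hand side. Write $\dot g_t = g_t \xi_t$ with $\xi_t = g_t^{-1}\dot g_t \in A^0\otimes K^0$, and recall $\beta \bullet g_t = g_t^{-1}\beta g_t + g_t^{-1}dg_t$, so that $\frac{d}{dt}(\beta\bullet g_t) = d\xi_t + [\beta\bullet g_t, \xi_t]$ (the standard infinitesimal gauge variation). Differentiating $e^{\beta\bullet g_t} = \sum_k (\beta\bullet g_t)^{\otimes k}$ termwise produces, in each tensor slot, an insertion of $\frac{d}{dt}(\beta\bullet g_t) = d\xi_t + [\beta\bullet g_t,\xi_t]$. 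Now I would apply Lemma \ref{conjugation}: the two equalities there say precisely that, after applying $\phi_K$ and working in $\N(A)\otimes K$, an insertion of $dg$ next to a tensor factor combines with multiplication by $g$ into the "moving $g$ past" operation, i.e. the insertions of $d\xi_t$ telescope against the $[\beta\bullet g_t,\xi_t]$ insertions and collapse to boundary terms. The outcome should be
$$ \frac{d}{dt}\,\phi_K(1\otimes e^{\beta\bullet g_t}) = \big[\,\phi_K(1\otimes e^{\beta\bullet g_t}),\, \xi_t\,\big]$$
in $\N(A)\hat\otimes K$ (with the bracket taken via the algebra structure on $\N(A)\hat\otimes K$ and the inclusion of $\xi_t$). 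On the other hand, $\frac{d}{dt}\big(g_t^{-1}\phi_K(1\otimes e^\beta)g_t\big) = -\xi_t\,g_t^{-1}\phi_K(1\otimes e^\beta)g_t + g_t^{-1}\phi_K(1\otimes e^\beta)g_t\,\xi_t = [\,g_t^{-1}\phi_K(1\otimes e^\beta)g_t,\,\xi_t\,]$. Thus both sides satisfy the same linear ODE $\dot Y_t = [Y_t,\xi_t]$ with the same initial value $\phi_K(1\otimes e^\beta)$ at $t=0$, so by uniqueness they coincide for all $t$, and in particular at $t=1$.

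The main obstacle is making the telescoping in the derivative of the left-hand side rigorous: one must carefully match the sign conventions of Lemma \ref{conjugation} against the derivative of the exponential $e^{\beta\bullet g_t}$, track that the computation really lives in the quotient $\N(A)$ (so that the identities $1\otimes a_1\otimes\cdots\otimes(a_na) + 1\otimes a_1\otimes\cdots\otimes a_n\otimes da = a\otimes a_1\otimes\cdots\otimes a_n$ are available), and check convergence issues in the completion $\hat{\N}(A)\hat\otimes K$. A secondary point is the reduction to $g$ connected to the identity: if $A^0\otimes K^0$ is not connected through invertibles one cannot choose such a path, but one can instead verify the identity formally in a one-parameter family with $t$ a formal parameter, or simply observe that the statement is needed only in situations (e.g. $K = \End V$, $A = \Omega^\bullet(U)$ with $U$ contractible) where such paths exist; alternatively, a purely algebraic argument replacing the $t$-derivative by a formal derivation with respect to an auxiliary variable works verbatim and avoids any topological hypothesis. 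I would present the formal-derivative version to keep the argument self-contained.
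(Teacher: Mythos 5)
Your strategy -- differentiate along a path $g_t$ from $1$ to $g$, show both sides solve the linear ODE $\dot Y_t=[Y_t,\xi_t]$ with the same initial condition, and conclude by uniqueness -- is genuinely different from the paper's proof, which is a direct algebraic computation: the paper expands $1\otimes e^{\beta\bullet g}$ tensor-degree by tensor-degree and collapses the interior factors $e^{g^{-1}dg}$ via telescoping identities in an auxiliary quotient $\N'(A\otimes K)$ (Equations (\ref{basic})--(\ref{right})), finishing with Lemma \ref{conjugation} to extract $g^{-1}$ and $g$ at the two ends. Your infinitesimal identity $\tfrac{d}{dt}\phi_K(1\otimes e^{\beta\bullet g_t})=[\phi_K(1\otimes e^{\beta\bullet g_t}),\xi_t]$ is essentially the derivative of those same telescoping relations, so the two arguments are close cousins; but note that Lemma \ref{conjugation} by itself only controls insertions of $dg$ in the \emph{first} and \emph{last} slots. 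For the interior insertions of $d\xi_t$ you also need the relation $\cdots\otimes x_{i-1}\otimes df\otimes x_i\otimes\cdots=\cdots\otimes x_{i-1}\otimes fx_i\otimes\cdots-\cdots\otimes x_{i-1}f\otimes x_i\otimes\cdots$ for $f\in A^0\otimes K^0$, which is exactly why the paper introduces the quotient $\N'(A\otimes K)$ by the span of the images of $S_i(f)$, $R_i(f)=[b,S_i(f)]$ and checks that $\phi_K$ factors through it. You should make that ingredient explicit; with it, the telescoping you describe does close up to the commutator, and the ODE-uniqueness step (equivalently, checking that $g_tY_tg_t^{-1}$ is constant) is routine once one works componentwise in the completed algebra.

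The genuine gap is the reduction to $g$ in the identity component. The proposition is asserted for an \emph{arbitrary} invertible $g\in A^0\otimes K^0$, and the paper really uses this generality: in the proof of Theorem \ref{theorem: automorphism} gauge invariance is applied to transition functions $g\colon U\cap U'\to GL(V)$, and the group of invertibles of $\Omega^0(U\cap U')\otimes\End V$ is not connected in general (already $\mathrm{Map}(S^1,\mathbb{C}^\ast)$ has $\pi_0\cong\mathbb{Z}$). Your two proposed remedies do not close this. A general invertible element of $A^0\otimes K^0$ need not admit a logarithm, so a formal substitute $g=e^{\xi}$ is unavailable; and the formal one-parameter family $1+t(g-1)$ is invertible only over $\mathbb{R}[[t]]$, where one cannot evaluate at $t=1$. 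Since the set of $g$ for which the identity holds is a subgroup of the invertibles (because $\beta\bullet(gh)=(\beta\bullet g)\bullet h$), your argument proves the statement on the identity component, but leaves all other components untouched. To recover the full statement you would either have to supplement the ODE argument with a separate (necessarily algebraic) treatment of representatives of $\pi_0$, or fall back on the paper's direct computation, which makes no connectivity assumption. As written, the proposal therefore proves a strictly weaker statement than Proposition \ref{proposition: gauge-invariance}.
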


\begin{proof}
We need to prove the equality
\[ \phi_K( 1 \otimes e^{\beta \bullet g})= g^{-1} \phi_K( 1 \otimes e^\beta)  g.\]
Expanding the exponential $ 1 \otimes e^{\beta \bullet g}=1 \otimes e^{g^{-1} \beta g+ g^{-1}dg}$ we obtain the expression:
\[1 \otimes e^{\beta \bullet g}=\sum_{r\geq0} 1 \otimes e^{g^{-1}dg}\otimes (g^{-1}\beta g \otimes e^{g^{-1}dg})^{\otimes r}.\]
Therefore, it is enough to prove that for $r$ fixed we have:
\begin{eqnarray}\label{fixed r}
{\phi}_K(1 \otimes e^{g^{-1}dg}\otimes (g^{-1}\beta g \otimes e^{g^{-1}dg})^{\otimes r})= g^{-1} {\phi}_K(1\otimes  \beta ^{\otimes r})  g.
\end{eqnarray}
To this end,
we set \[\N'(A\otimes K):=\mathbb{B}(A\otimes K)/ \Deg'(A\otimes K),\]
where $\Deg'(A \otimes K) \subseteq \Deg(A \otimes K)$  is the vector subspace spanned by the images of elements of the form
$S_i(f), R_i(f)$ where $f \in A^0 \otimes K^0$. The map ${\phi}_K: \mathbb{B}(A\otimes K) \rightarrow \N(A) \otimes K$ factors through the quotient $\N'(A \otimes K)$. We claim that for each $r\geq 2$ the following equality holds in 
$\N'(A \otimes K)$:
\begin{equation}\label{N'}
1 \otimes e^{g^{-1}dg}\otimes (g^{-1}\beta g \otimes e^{g^{-1}dg})^{\otimes r}= 1 \otimes e^{g^{-1}dg}\otimes g^{-1}\beta\otimes\beta^{\otimes r-2}\otimes \beta g \otimes  e^{g^{-1}dg}
\end{equation}
Since the computation occurs in $\N'(A\otimes K)$, we have the following identity that will be used repeatedly:
\begin{equation}\label{basic}
 \cdots x_{i-1} \otimes dg \otimes x_i \otimes \cdots \quad = \quad  \cdots\otimes x_{i-1} \otimes gx_i \otimes \cdots - \cdots \otimes x_{i-1} g \otimes x_i \otimes \cdots 
 \end{equation}
Let us now use Equation (\ref{basic}) to show that the following identity also holds in $N'(A\otimes K)$:
\begin{equation}\label{middle}
 \cdots \otimes x g \otimes e^{ g^{-1}dg} \otimes  g^{-1} y  \otimes \cdots \quad = \quad \cdots x \otimes y \otimes \cdots 
 \end{equation}
 
 For this we expand $\cdots \otimes  x g \otimes e^{ g^{-1}dg} \otimes \cdots$
 and simplify as follows:
 
 \begin{eqnarray*}
 && \hspace{-1cm} \sum_{r \geq 0}  \cdots \otimes x g \otimes (g^{-1}dg)^{\otimes r}\otimes g^{-1}y \otimes \cdots \\
 &=& \cdots \otimes  x g \otimes  g^{-1} y \otimes \cdots \quad + \quad\sum_{r \geq 1}  \cdots \otimes x g \otimes (g^{-1}dg)^{\otimes r}\otimes g^{-1}y \otimes \cdots \\
 &=&  \cdots \otimes  x g \otimes  g^{-1} y \otimes \cdots \quad +\quad \sum_{r \geq 1}  \cdots \otimes x g \otimes (g^{-1}dg)^{\otimes r}\otimes d(g^{-1})\otimes y \otimes \cdots  \quad  \\
 && + \quad \sum_{r \geq 1}  \cdots \otimes x g \otimes (g^{-1}dg)^{\otimes r-1} \otimes g^{-1}dg g^{-1}\otimes y \otimes \cdots \\
&=& \cdots \otimes  x g \otimes  g^{-1} y \otimes \cdots \quad -\quad  \cdots x g \otimes d(g^{-1})\otimes y \otimes \cdots\\
& = & \cdots \otimes x \otimes y \otimes \cdots  
     \end{eqnarray*}
   
   This completes the proof of Equation (\ref{middle}) which clearly implies Equation (\ref{N'}). Using a telescopic sum as above one can also prove that the following identities hold in $\N'(A\otimes K)$:
   
   \begin{equation}\label{left}
   1 \otimes e^{g^{-1}dg}\otimes g^{-1}x \otimes \cdots \quad = \quad (1 \otimes g^{-1}x \otimes \cdots) \quad - \quad (1 \otimes d(g^{-1})\otimes x \otimes \cdots)
   \end{equation}
   
   \begin{equation}\label{right}
   \cdots \otimes y g \otimes e^{g^{-1}dg} \quad = \quad (\cdots \otimes yg) \quad + \quad (\cdots \otimes y \otimes dg)
   \end{equation}
   
   We can now use Equations (\ref{N'}),(\ref{left}) and (\ref{right}) together we Lemma \ref{conjugation} to prove Equation (\ref{fixed r}) as follows:
   \begin{eqnarray*}
 &&\hspace{-1cm} {\phi}_K(1 \otimes e^{g^{-1}dg}\otimes (g^{-1}\beta g \otimes e^{g^{-1}dg})^{\otimes r})\\  
  &=&\phi_K(1 \otimes e^{g^{-1}dg}\otimes g^{-1}\beta\otimes\beta^{\otimes r-2}\otimes \beta g \otimes  e^{g^{-1}dg} )\\  
 &=&\phi_K(1 \otimes e^{g^{-1}dg}\otimes g^{-1}\beta\otimes\beta^{\otimes r-2}\otimes \beta g )
 +\phi_K(1 \otimes e^{g^{-1}dg}\otimes g^{-1}\beta\otimes\beta^{\otimes r-2}\otimes \beta  \otimes  dg )\\     
&=& \phi_K( 1 \otimes e^{g^{-1}dg}\otimes g^{-1}\beta\otimes\beta^{\otimes r-1}) g \\
&=& \left(\phi_K(1 \otimes  g^{-1}\beta\otimes\beta^{\otimes r-1}) -\phi_K(1 \otimes d(g^{-1})\otimes \beta^{\otimes r})\right)g \\
&=& g^{-1} \phi_K( 1\otimes \beta^{r}) g.
 \end{eqnarray*}
\end{proof}

\begin{corollary}\label{corollary: gauge-invariance}
Let $A,B$ be graded commutative dg algebras and
$$ \varphi: \hat{\barC}(A)\to B$$
 a morphism of filtered dg algebras whose kernel contains $\Deg(A)$.
Given any graded algebra $K$, we consider the linear map
  \[
 \tilde{\varphi}_K: (A\otimes K)^1  \to  B\hat{\otimes} K, \qquad \beta \mapsto \varphi_K(1\otimes e^{ \beta}),
\]
 where $\varphi_K:= (\varphi \otimes \id)\circ \phi_K$.
 The map $\tilde{\varphi}_K$ is equivariant with respect to $U_0(A\otimes K)$,
 the group of invertible elements of $A^0\otimes K^0$,
 which acts on $B\hat{\otimes} K$ by conjugation via the inclusion:
 $$A\otimes K \hookrightarrow \barC A\otimes K \to B\otimes K. $$
\end{corollary}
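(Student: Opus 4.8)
The plan is to deduce the statement directly from Proposition \ref{proposition: gauge-invariance}, the key point being that $\varphi$ descends to the completed normalized bar complex as a morphism of dg algebras. Since $A$ is graded commutative, $\Deg(A)$ is a two-sided ideal for the shuffle product, so the projection $\pi_A\colon \hat\barC(A) \to \hat\N(A)$ is a morphism of dg algebras. By hypothesis $\ker\varphi \supseteq \Deg(A)$, and because $\varphi$ is a morphism of filtered algebras it is continuous for the bar filtrations, hence $\ker\varphi$ is closed and therefore contains $\hat\Deg(A)$. Thus there is a unique morphism of dg algebras $\bar\varphi\colon \hat\N(A) \to B$ with $\varphi = \bar\varphi\circ\pi_A$. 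Tensoring with $K$ and recalling that the map $\hat\phi_K$ of Proposition \ref{proposition: gauge-invariance} is by definition $\phi_K$ followed by the projection $\hat\barC A\hat\otimes K \to \hat\N(A)\hat\otimes K$, we get the factorisation
\[ \tilde\varphi_K \;=\; (\bar\varphi\otimes\id)\circ\hat\phi_K. \]

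First I would record what Proposition \ref{proposition: gauge-invariance} provides: for every $\beta\in (A\otimes K)^1$ (the gauge action $\beta\bullet g = g^{-1}\beta g + g^{-1}dg$ preserves total degree $1$) and every invertible $g\in A^0\otimes K^0$,
\[ \hat\phi_K(1\otimes e^{\beta\bullet g}) \;=\; g^{-1}\,\hat\phi_K(1\otimes e^{\beta})\,g \qquad \text{in } \hat\N(A)\hat\otimes K, \]
where $g$ acts through the inclusion $A\otimes K \hookrightarrow \hat\N(A)\hat\otimes K$ and the product on the right is induced by the shuffle product on $\hat\N(A)$. Applying the dg algebra morphism $\bar\varphi\otimes\id$ to both sides, and using that a unital algebra morphism intertwines conjugation by an invertible element, this becomes
\[ \tilde\varphi_K(\beta\bullet g) \;=\; \hat g^{-1}\,\tilde\varphi_K(\beta)\,\hat g, \qquad \hat g := (\bar\varphi\otimes\id)(g). \]

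It remains only to identify $\hat g$. The element $g$ is included into $\hat\N(A)\hat\otimes K$ via $A \hookrightarrow \barC A \to \N(A)$ on the first factor, so $(\bar\varphi\otimes\id)(g)$ is exactly the image of $g$ under $A\otimes K \hookrightarrow \barC A\otimes K \xrightarrow{\varphi\otimes\id} B\otimes K$, i.e.\ $\hat g$ is the element by which $U_0(A\otimes K)$ is declared to act on $B\hat\otimes K$ in the statement; since $\varphi\circ i$ is a unital algebra morphism, $\hat g$ is invertible with inverse the image of $g^{-1}$. The displayed identity is then precisely the asserted $U_0(A\otimes K)$-equivariance of $\tilde\varphi_K$. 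There is essentially no genuine obstacle here beyond bookkeeping: the only points requiring care are verifying that $\varphi$ really passes to $\hat\N(A)$ as a morphism of dg algebras (which uses graded commutativity of $A$ together with the hypothesis on $\ker\varphi$) and tracking how the gauge parameter $g$ is transported along $\varphi$.
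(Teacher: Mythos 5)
Your proposal is correct and follows the same route as the paper, which simply declares the corollary a direct consequence of Proposition \ref{proposition: gauge-invariance}; you have merely made explicit the factorization of $\varphi$ through $\hat{\N}(A)$ and the transport of the gauge parameter $g$, which is exactly the intended bookkeeping.
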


\begin{proof}
This is a direct consequence of Proposition \ref{proposition: gauge-invariance}.\end{proof}

\section{Parallel transport and automorphisms over $\loop X$}\label{section: automorphisms}

We are interested in describing $\infty$-local systems over $X$ in terms of the free loop space $\loop X$. In the case of ordinary local systems, there is the following well-known result, which appears for instance in Tadler, Wilson and Zeinalian \cite{TWZ}.

\begin{lemma}\label{basic2}
 $(E,\nabla)$ be a vector bundle with a flat connection and $\pi: \loop X \to X$ be the evaluation at $1\in S^1$.
Then $\pi^*E$  is a vector bundle with flat connection $\pi^*\nabla$. Moreover, the holonomies of $(E,\nabla)$ define an 
automorphism of the pair $(\pi^*E,\pi^*\nabla)$.
\end{lemma}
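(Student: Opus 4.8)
The plan is to verify the three assertions of Lemma~\ref{basic2} in turn: flatness of $\pi^*\nabla$, that the fibrewise holonomies assemble into a smooth invertible section of $\End(\pi^*E)$, and --- the actual content --- that this section is covariantly constant, which for a degree-$0$ endomorphism is exactly the condition of being an automorphism of the pair $(\pi^*E,\pi^*\nabla)$ in $\FlatZ(\loop X)$.

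Flatness is immediate from naturality of curvature: $F_{\pi^*\nabla}=\pi^*F_\nabla=0$, so $(\pi^*E,\pi^*\nabla)$ is a vector bundle with flat connection, in particular an object of $\FlatZ(\loop X)$ (concentrated in internal degree $0$). For the second point, note that for $\gamma\in\loop X$ the fibre $(\pi^*E)_\gamma$ is canonically $E_{\gamma(1)}$, and I define $\hol_\gamma\in\End(E_{\gamma(1)})$ to be the parallel transport of $\nabla$ once around $\gamma$, based at $\gamma(1)$; these assemble into a section $\hol$ of $\End(\pi^*E)$. It is fibrewise invertible because the parallel transport of a flat connection is always invertible, the inverse being transport around the reversed loop. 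Smoothness (in whichever model of $\loop X$ one fixes, Fr\'echet or diffeological) is local on $\loop X$: near a given loop, cover $S^1$ by finitely many closed arcs, each mapped into an open set over which $E$ trivialises; then $\hol_\gamma$ is the corresponding finite composition of transports along these arcs, and transport along an arc --- the solution of a linear ODE whose coefficients come from the connection $1$-form --- depends smoothly on the arc. Equivalently, in a local trivialisation $\nabla=d+A$ the holonomy is the convergent series of Chen iterated integrals $\sum_{k\ge 0}\int_\gamma A\cdots A$; this is exactly what Theorem~\ref{theorem: automorphism} makes precise in the $\mathbb{Z}$-graded setting, so the present lemma is its degree-$0$ shadow.

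It remains to prove $\pi^*\nabla\,\hol=0$. It suffices to test this along smooth paths in $\loop X$: let $H\colon [0,1]\times S^1\to X$ be smooth, put $\gamma_t:=H(t,\cdot)$ and $c(t):=\gamma_t(1)$, and write $P_c\colon E_{\gamma_0(1)}\to E_{\gamma_1(1)}$ for the parallel transport of $\nabla$ along $c$. Then covariant constancy of $\hol$ along $t\mapsto\gamma_t$ is precisely the identity $P_c\circ\hol_{\gamma_0}=\hol_{\gamma_1}\circ P_c$. Now the left-hand side is the parallel transport of $\nabla$ along the concatenation ``$\gamma_0$, then $c$'' and the right-hand side along ``$c$, then $\gamma_1$''; these two paths from $\gamma_0(1)$ to $\gamma_1(1)$ are the two boundary arcs, from the corner $(0,0)$ to the corner $(1,1)$, of the square obtained by composing $H$ with the quotient $[0,1]\to S^1$ in the second variable (note that $H(t,0)=H(t,1)$), hence homotopic rel endpoints. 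Since parallel transport of a flat connection is invariant under homotopies rel endpoints, the two sides agree, and $\hol$ is an automorphism of $(\pi^*E,\pi^*\nabla)$.

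The main obstacle is the last step, whose conceptual core is the homotopy-invariance of flat parallel transport (the infinitesimal Riemann--Hilbert phenomenon). Everything else is routine; the only points requiring genuine care are a consistent choice of smooth structure on $\loop X$ when asserting smoothness of $\hol$ and when reducing covariant constancy to behaviour along smooth paths, together with fixing orientation conventions on $S^1$ and the direction of holonomy. In the proof of the general Theorem~\ref{theorem: automorphism} the ODE-and-homotopy reasoning used here is replaced by the formal bar-complex computations of \S\ref{section: formal holonomies} together with Chen's iterated integral map.
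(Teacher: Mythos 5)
Your argument is correct, but it is worth pointing out that the paper offers no proof of Lemma \ref{basic2} at all: it is quoted as a well-known fact with a pointer to \cite{TWZ}, and the machinery the paper actually develops is aimed at the $\mathbb{Z}$-graded generalization, Theorem \ref{theorem: automorphism}. There the route is entirely different from yours: the covariant constancy $d\hol(E)+[\pi^*\alpha,\hol(E)]=0$ is obtained \emph{algebraically}, by showing that $1\otimes e^{\alpha}$ is a cocycle in the completed bar complex (Lemma \ref{lemma: MC}), pushing this through the twisting map $\phi_K$ and Chen's iterated integral $\sigma$ (Corollary \ref{corollary: formal holonomies}, Theorem \ref{theorem: Chen1}), and then gluing over the cover of $\loop X$ by the sets $\loop U$ with $E\vert_U$ trivial (Corollaries \ref{corollary: trivialon1} and \ref{corollary: gauge-invariance}). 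Your proof instead uses the classical picture --- parallel transport as the solution of a linear ODE, plus homotopy invariance of flat transport applied to the square $(t,s)\mapsto H(t,e^{2\pi i s})$, whose two boundary arcs from $(0,0)$ to $(1,1)$ realize $P_c\circ\hol_{\gamma_0}$ and $\hol_{\gamma_1}\circ P_c$. That argument is cleaner and more self-contained for the ungraded case, and correctly identifies covariant constancy with the intertwining identity along smooth paths in $\loop X$; its limitation, and the reason the paper takes the longer route, is that it does not extend to flat $\mathbb{Z}$-graded connections, where $\hol(E)$ has components of positive form degree on $\loop X$ and there is no single ``transport once around the loop'' operator to which homotopy invariance could be applied. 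So: correct proof, genuinely different (and for this lemma, more elementary) method than the formalism the paper uses for its generalization.
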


We will show that Lemma \ref{basic2} generalizes to the case of flat $\mathbb{Z}$-graded connections.

We treat the loop space $\loop X$ as a diffeological space, following ideas going back to the seminal work of Chen \cite{Chen}.
The interested reader is referred to the book \cite{diffeology} for a careful introduction to the theory of diffeological structures.

\subsection{Holonomy as an automorphism over $\loop X$}\label{subsection: automorphisms - automorphisms}

\begin{definition}
The iterated integral map 
$$\sigma: \barC \Omega^\bullet(X) \rightarrow \Omega^\bullet(\loop X)$$ is defined as follows.
Let $\ev: \Delta_k \times \loop X \rightarrow X^{\times (k+1)}$ be the  evaluation map:
\[ \ev(t_1,\cdots, t_k, \gamma):= (\gamma(1),\gamma(e^{2\pi i t_1}), \dots, \gamma(e^{2 \pi i t_k})).\]
The map $\sigma$ is the composition
\[ \xymatrix{
\Omega^\bullet(X)\otimes (\s \Omega^\bullet(X))^{\otimes k} \cong \Omega^\bullet(X)^{\otimes k+1} \ar[r]^(0.7){\iota}&  \Omega^\bullet(X^{\times (k+1)}) \ar[r]^{\ev^*}& \Omega^\bullet(\Delta_k \times \loop X) \ar[r]^(0.6){\int_{\Delta_k}}& \Omega^\bullet(\loop X).}\]
\end{definition}

\begin{theorem}[Chen \cite{Chen}]\label{theorem: Chen1}The  iterated integral map
\[ \sigma: \barC \Omega^\bullet(X) \rightarrow \Omega^\bullet(\loop X)\]
is a morphism of  dg algebras. 
If $X$ is connected, the kernel of $\sigma$ equals the subcomplex $\Deg(\Omega^\bullet(X))$. 
Moreover, if $X$ is simply-connected then $\sigma$ induces an isomorphism in cohomology:
\[ \barH(\Omega^\bullet(X)) \cong H^\bullet(\loop X).\]
\end{theorem}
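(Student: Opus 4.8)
The plan is to prove the three assertions separately. That $\sigma$ is a chain map, $\sigma b = d\sigma$, is a Stokes-theorem computation: writing $\sigma = \int_{\Delta_k}\circ\,\ev^*\circ\,\iota$ and splitting the de Rham differential on $\Delta_k\times\loop X$ as $d_\Delta + d_{\loop X}$, integration along the fibre turns $d_{\loop X}$ into the differential on $\Omega^\bullet(\loop X)$ and, via $\int_{\Delta_k}d_\Delta = \int_{\partial\Delta_k}$, turns the codimension-one faces of $\Delta_k$ into the terms of $b_1$: the faces $\{t_i = t_{i+1}\}$ collapse adjacent evaluation points and yield the internal products $\cdots\otimes a_ia_{i+1}\otimes\cdots$, while $\{t_1 = 0\}$ and $\{t_k = 1\}$ merge the first and last evaluation points with the fixed point $\gamma(1)$ and yield the two bimodule-action terms appropriate to $\barC A = \HC_\bullet(A,A)$; the $b_0$-part comes from $\ev^*$ commuting with $d$ and $\iota$ translating it into the per-factor de Rham differential. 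Multiplicativity, $\sigma(x \ast y) = \sigma(x)\wedge\sigma(y)$ for the shuffle product, follows from the standard simplicial subdivision of the prism $\Delta_p\times\Delta_q$ into $(p+q)$-simplices indexed by $(p,q)$-shuffles, together with Fubini, the base components multiplying at $\gamma(1)$. Beyond these two geometric inputs both claims reduce to Koszul sign bookkeeping, which I would not reproduce here.

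For the kernel, the inclusion $\Deg(\Omega^\bullet(X))\subseteq\ker\sigma$ is a dimension count: if a $0$-form $f\in C^\infty(X)$ occupies an inner slot, then $\ev^*f$ contributes the factor $f(\gamma(e^{2\pi i t_i}))$ which carries no $dt_i$, so the integrand on $\Delta_\bullet\times\loop X$ cannot contain the top form $dt_1\wedge\cdots\wedge dt_k$ and the integral vanishes; hence $\sigma\circ S_i(f) = 0$, and combining this with the chain-map property gives $\sigma\circ R_i(f) = 0$. Thus $\sigma$ descends to $\bar\sigma : \N(\Omega^\bullet(X))\to\Omega^\bullet(\loop X)$, and what remains — the injectivity of $\bar\sigma$ for $X$ connected — is the technical core of the theorem and the step I expect to be the main obstacle. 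The plan there is to detect a nonzero class $\xi\in\N(\Omega^\bullet(X))$ on a plot of $\loop X$: using naturality of $\sigma$ under smooth maps one localizes to loops supported in a coordinate chart $U\cong\mathbb{R}^n$ (this is where connectedness of $X$ enters), chooses a representative of $\xi$ with all inner entries of positive form-degree (possible since $\N$ is the quotient of $\barC$ by a span containing every chain with an inner $0$-form), and evaluates $\bar\sigma(\xi)$ on explicit multi-parameter families of piecewise axis-parallel loops in $U$, for which $\sigma$ reduces to an ordinary iterated integral $\int_{0\le s_1\le\cdots\le s_k\le 1}(\cdots)\,ds_1\cdots ds_k$ of the coefficient functions; these are then recovered by repeated differentiation of the resulting form at a constant loop, forcing $\xi = 0$. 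This recovery step is essentially Chen's argument in \cite{Chen, Chen_reduced-bar}.

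For the last assertion, combine with the acyclicity of $\Deg(\Omega^\bullet(X))$ recalled above, so that it suffices to show $\bar\sigma$ is a quasi-isomorphism when $X$ is simply connected. Conceptually, $\barC\Omega^\bullet(X) = \HC_\bullet(\Omega^\bullet(X),\Omega^\bullet(X))$ is the usual complex computing the Hochschild homology of the de Rham algebra, i.e. the derived tensor product $\Omega^\bullet(X)\otimes^{\mathbb L}_{\Omega^\bullet(X)\otimes\Omega^\bullet(X)}\Omega^\bullet(X)$; the free loop space is the homotopy pullback $\loop X = X\times_{X\times X}PX$ with $PX\simeq X$; and $\sigma$ is Chen's comparison map between the associated Eilenberg–Moore model and differential forms on $\loop X$. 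To make this effective, filter $\N(\Omega^\bullet(X))$ by the number of tensor factors and $\Omega^\bullet(\loop X)$ by preimages of a skeletal filtration of $X$ under the evaluation $\pi:\loop X\to X$, whose fibre is $\basedloop X$; since $\sigma$ uses exactly one integration variable per inner form, it is filtration-compatible. At $E_2$ the first spectral sequence is built from a $\mathrm{Tor}$ over $H^\bullet(X)\otimes H^\bullet(X)$ and matches the spectral sequence of $\pi$ once one invokes the based-loop-space version of the theorem — $\B(\Omega^\bullet(X))\to\Omega^\bullet(\basedloop X)$ is a quasi-isomorphism for simply connected $X$, proved by the same Stokes argument together with the Eilenberg–Moore spectral sequence of the contractible path fibration $\basedloop X\to PX\to X$, which converges precisely because $\pi_1(X) = 1$. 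A comparison of spectral sequences (valid since $X\times X$ is then simply connected) upgrades the $E_2$-isomorphism to $\bar\sigma$ being a quasi-isomorphism, giving $\barH(\Omega^\bullet(X))\cong H^\bullet(\loop X)$.
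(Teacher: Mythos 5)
The paper does not prove this statement at all: it is quoted as Chen's theorem and used as a black box, with the proof deferred entirely to \cite{Chen} (and \cite{Chen_reduced-bar}, \cite{GJP} for the normalized complex), so there is no in-paper argument to compare yours against. Your sketch correctly reproduces the architecture of the cited proof --- Stokes' theorem on $\Delta_k\times\loop X$ with the faces $\{t_i=t_{i+1}\}$, $\{t_1=0\}$, $\{t_k=1\}$ accounting for $b_1$, the shuffle decomposition of $\Delta_p\times\Delta_q$ for multiplicativity, the $dt_i$-counting argument for $\Deg(\Omega^\bullet(X))\subseteq\ker\sigma$ together with $R_i(f)=[b,S_i(f)]$, and the path-fibration/Eilenberg--Moore comparison in the simply-connected case --- while the two steps you explicitly defer (injectivity of the induced map on $\N(\Omega^\bullet(X))$ for connected $X$, and the convergence and comparison of the two spectral sequences) are precisely the substantive technical content of Chen's papers, so your outline is faithful to the source but, like the paper's own treatment, not self-contained at those points.
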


\begin{remark}
In the following, we will always assume that $X$ is connected and that the iterated integral map $\sigma$
denotes the induced map on the quotient
$$ \sigma: \N(\Omega^\bullet(X)) = \barC \Omega^\bullet(X)/\Deg(\Omega^\bullet(X)) \to \Omega^\bullet(\loop X).$$
\end{remark}

Our aim is to define generalized holonomies by applying the results of \S \ref{subsection: formal holonomies - bar} and \S \ref{subsection: formal holonomies - normalized bar} to $A=\Omega^\bullet(X)$ and $K = \End V$, with $V$ some complex of vector spaces.
We first prove some auxiliary results.

\begin{lemma}\label{lemma: key}
Let $W$ be an $m$-dimensional compact manifold with boundary. If $\gamma: S^1 \rightarrow W^{\circ}$ is a smooth map, there exists an open set $U\subset W$ which contains $\gamma(S^1)$ and is homotopy equivalent to a finite $CW$-complex of dimension $1$.
\end{lemma}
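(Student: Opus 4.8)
The plan is to localize the problem so that it reduces to a statement about loops in open subsets of Euclidean space, and then to invoke the standard fact that a smooth compact loop in an open manifold can be thickened to a small neighborhood which deformation retracts onto a one-dimensional complex. First I would choose a point $p$ on the loop $\gamma$; since $W^\circ$ is an $m$-manifold, $\gamma(S^1)$ is a compact subset of $W^\circ$, hence contained in some open set $W'$ that is an open subset of $\mathbb{R}^m$ up to diffeomorphism, or at worst covered by finitely many charts. The image $\gamma(S^1)$ is a compact one-dimensional subset (the continuous image of $S^1$), so it has a cofinal system of open neighborhoods, and it suffices to exhibit one such neighborhood with the desired homotopy type.

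The key step is to produce the open set $U$. Here I would triangulate or simplicially approximate: cover $\gamma(S^1)$ by finitely many chart balls $B_1,\dots,B_N$ contained in $W^\circ$, and by compactness of $S^1$ pick a finite subdivision $0=s_0<s_1<\cdots<s_k=1$ of the circle so that each arc $\gamma([s_{j-1},s_j])$ lies in a single ball $B_{i(j)}$. The union $K:=\bigcup_j \gamma([s_{j-1},s_j])$ is $\gamma(S^1)$ itself; I want to replace it by a nearby \emph{piecewise-linear} loop $\delta$ (linear in each chart) that is homotopic to $\gamma$ inside a neighborhood of $K$, together with thin tubes connecting $\gamma$ to $\delta$. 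A cleaner alternative, which I expect to be the one actually used, is this: take a smooth triangulation of $W^\circ$ fine enough that the closed star $N$ of the set of simplices meeting $\gamma(S^1)$ is contained in a prescribed neighborhood; then $N$ is a compact subcomplex, but this still need not be one-dimensional. To get dimension $1$, instead take $U$ to be a regular (open) neighborhood of the \emph{image of a PL-loop} $\delta$ obtained by simplicial approximation of $\gamma$ with respect to a sufficiently fine triangulation: $\delta$ traverses a one-dimensional subcomplex $L$ (a finite graph) of the triangulation, $\gamma$ is homotopic to $\delta$ inside any neighborhood of $L$, and an open regular neighborhood $U$ of $L$ in $W^\circ$ deformation retracts onto $L$, which is a $1$-dimensional CW-complex. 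Since simplicial approximation lets us take $\delta$ as $C^0$-close to $\gamma$ as we like, $U$ can be arranged to contain $\gamma(S^1)$.

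I would then assemble the argument: (1) by compactness reduce to finitely many charts and fix a fine enough smooth triangulation of $W^\circ$; (2) simplicially approximate $\gamma$ by a loop $\delta$ carried on a finite $1$-dimensional subcomplex $L$, with the straight-line homotopy from $\gamma$ to $\delta$ sweeping out only a small neighborhood of $L$; (3) take $U$ to be an open regular neighborhood of $L$ in $W^\circ$, which exists because $W^\circ$ is a manifold and $L$ a compact subcomplex, and which deformation retracts onto $L$ by the regular neighborhood theorem; (4) check $\gamma(S^1)\subset U$ by choosing the triangulation fine enough that the approximation error is smaller than the ``radius'' of the tube. The main obstacle is step (2)–(3): ensuring simultaneously that $U$ is genuinely homotopy equivalent to a $1$-complex (not just contains $\gamma(S^1)$) \emph{and} that $U$ still contains the original loop $\gamma(S^1)$; this is a quantitative matching of ``fineness of triangulation'' against ``size of regular neighborhood'', and it is exactly the kind of point where one needs the manifold structure and the regular neighborhood theorem rather than a soft argument. (This is presumably why the authors thank David Mart\'inez Torres for help with the proof of the companion Lemma \ref{lemma: skeleton}.)
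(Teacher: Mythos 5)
There is a genuine gap at the step you yourself flag as the main obstacle, and I do not think it can be closed along the lines you propose. The problem is the containment $\gamma(S^1)\subset U$. Simplicial approximation with respect to a triangulation of mesh $\epsilon$ only guarantees that $\gamma(x)$ and $\delta(x)$ lie in a common closed simplex, so $d(\gamma(x),L)\le \epsilon$; but a regular (second derived) neighborhood $U$ of the $1$-subcomplex $L$ only contains points within roughly $c\,\epsilon$ of $L$ for some constant $c<1$ determined by the shape of the simplices. Both the approximation error and the ``radius of the tube'' scale linearly with the mesh, so refining the triangulation does not help: concretely, if $\gamma$ passes through the barycenter of a top-dimensional simplex $\sigma$ (and after refinement it will still pass through, or arbitrarily near, the barycenter of \emph{some} top simplex), that point lies at a definite fraction of $\epsilon$ from the $1$-skeleton and is \emph{not} in the derived neighborhood of $L$. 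Replacing $L$ by a generic embedded or immersed perturbation of $\gamma$ and using a tubular neighborhood runs into the same clash, since near a self-intersection of $\gamma$ the admissible tube radius is itself of the order of the approximation error. So as written the argument does not produce an open set containing the original image $\gamma(S^1)$, which is what the lemma (and its use in Corollary \ref{corollary: trivialon1}) requires.

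The paper avoids this quantitative matching entirely by turning the problem around: fix \emph{any} smooth triangulation $T$ of $W$, let $D$ be the $(m-2)$-skeleton of the dual decomposition, and show that $W\setminus D$ deformation retracts onto the full $1$-skeleton $T_1$ (a finite $1$-complex). Then the only thing to arrange is that $\gamma(S^1)$ misses $D$, and since $\dim\gamma(S^1)+\dim D=1+(m-2)<m$ this is achieved by a general-position ambient isotopy moving $D$ to $D'$ with $D'\cap\gamma(S^1)=\emptyset$; the set $U=W\setminus D'$ then contains all of $\gamma(S^1)$ with no estimates needed. In other words: rather than shrinking a neighborhood of an adapted $1$-complex down around an approximation of the loop, one takes the largest possible ``neighborhood'' of the largest available $1$-complex (the complement of a codimension-two set) and uses transversality, which is soft. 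If you want to salvage your approach, the fix is essentially to import this idea --- put the loop in general position with respect to the triangulation rather than approximating the loop by the triangulation.
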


\begin{proof}
Let $T$ be a smooth triangulation of $W$ and $D\subset W$ be the $m-2$ skeleton of the dual decomposition of $T$. 
We recall that $D$ is defined as follows: given a $k$-simplex $\Delta$ in $T$, then
\[ \Delta\cap D:= \bigcup_{|\sigma|=2} D(\sigma),\]
where the union runs over all two-dimensional faces $\sigma$ of $\Delta$ and $D(\sigma)$ is defined to be the convex hull
of the baricenters of all faces of $\Delta$ that contain $\sigma$. 

It is well known that $U=W\setminus D$ deformation retracts onto the $1$-skeleton of the triangulation $T$. For completeness, let us reproduce the proof of this fact. For each $k\geq 0$ denote by
$T_k$ the $k$-skeleton of the triangulation $T$. Clearly, $T_1 \cap U = T_1$ and therefore it is enough to prove that if $k>1$ then $T_k \cap U $ deformation retracts onto
$T_{k-1} \cap U$. Consider a $k$-dimensional simplex $\Delta$ of $T$ and let $b \in \Delta$ be its baricenter. We need to prove that $\Delta \cap U$ deformation retracts onto $ \partial (\Delta) \cap U$. Since $k>1$ we have $b \in D$. Any point in $v \in \Delta \setminus \{b\}$ can be written uniquely as a convex combination $ v= t b+ (1-t) w$ where $w \in \partial \Delta$ and this gives a deformation retraction of $\Delta \setminus \{b\}$ onto $\partial \Delta$. We claim that this retraction restricts to a deformation retraction of $ \Delta \cap U $ onto $\partial (\Delta) \cap U$. We need to prove that for any point $ v= t b+ (1-t) w$ which lies in $U$, the segment from $v$ to $w$ lies in $U$. Suppose this were not the case and choose $z \in D$ in the segment from $v$ to $w$. By definition $z \in D(\sigma)$ for some two dimensional face $\sigma$ of $\Delta$. We know that $D(\sigma) \cap \Delta$ is convex and also $b \in D(\sigma)$. Since $v$ is in the segment from $z$ to $b$ we conclude that $v \in D$. This contradicts the assumption.

For each $m$-simplex $\Delta$ of $T$, $D\cap \Delta$ is the union of finitely many compact submanifolds of dimension $m-2$. By general position, there exists an ambient isotopy $\Gamma$ that takes $D$  to $D'$ and such that $D' \cap \gamma(S^1)=\emptyset$. Therefore, $U':= W\setminus D'$ satisfies the conditions of the lemma.
\end{proof}

\begin{lemma}\label{lemma: skeleton}
Let $X$ be a smooth manifold without boundary and $\gamma:S^1 \rightarrow X$ be a smooth map. Then there exists an open subset $U \subset X$ which contains $\gamma(S^1)$ and which is homotopy equivalent to a CW-complex of dimension $1$.
\end{lemma}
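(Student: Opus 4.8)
The plan is to reduce the statement about the manifold $X$ (without boundary) to the compact-with-boundary case already handled in Lemma~\ref{lemma: key}. The idea is to produce a compact manifold with boundary that contains the image $\gamma(S^1)$ in its interior, then apply the previous lemma verbatim.

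\medskip

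First I would invoke the existence of a proper smooth function $f\colon X\to \mathbb{R}$ bounded below (for instance, embed $X$ into some $\mathbb{R}^N$ by Whitney and take $f$ the restriction of the squared norm, or use a proper exhaustion function). Since $\gamma(S^1)$ is compact, it is contained in a sublevel set $f^{-1}((-\infty, c])$ for some regular value $c$ of $f$ (regular values are dense by Sard, so we may pick $c$ as large as needed while staying regular). Set $W := f^{-1}((-\infty, c])$. By the regular-value theorem $W$ is a smooth compact manifold with boundary $\partial W = f^{-1}(c)$, and by construction $\gamma(S^1) \subset f^{-1}((-\infty,c)) = W^{\circ}$. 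Now apply Lemma~\ref{lemma: key} to the compact manifold with boundary $W$ and the smooth loop $\gamma\colon S^1 \to W^{\circ}$: it yields an open set $U \subset W$ containing $\gamma(S^1)$ that is homotopy equivalent to a finite $1$-dimensional $CW$-complex. Since $W^{\circ}$ is open in $X$ and $U$ can be arranged to lie inside $W^{\circ}$ (shrink $U$ by intersecting with $W^{\circ}$ if necessary, noting the deformation retraction in the proof of Lemma~\ref{lemma: key} stays inside the interior because the ambient isotopy $\Gamma$ can be taken compactly supported away from $\partial W$), $U$ is also open in $X$, which is the desired conclusion.

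\medskip

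The main technical point to watch is ensuring that the open set produced by Lemma~\ref{lemma: key} is genuinely open in $X$ and not merely in $W$; this is where one must be slightly careful, since open subsets of $W$ that meet $\partial W$ are not open in $X$. The clean fix is to choose the regular value $c$ and then work with the strictly smaller sublevel set: run Lemma~\ref{lemma: key} but track that the complement $U' = W \setminus D'$ deformation retracts onto the $1$-skeleton of a triangulation of $W$, and then replace $U'$ by $U' \cap W^{\circ}$. One checks this intersection still contains $\gamma(S^1)$ (which lies in $W^{\circ}$) and still deformation retracts onto a finite $1$-complex, because one can push the triangulation's $1$-skeleton into the interior, or simply observe that $W^{\circ}$ itself already deformation retracts onto $W$ is false — so instead it is cleanest to pick \emph{two} regular values $c_1 < c_2$, apply the construction on $W = f^{-1}((-\infty,c_2])$ to get an open $U'$, and note $f^{-1}((-\infty,c_1)) \cap U'$ does the job once $\gamma(S^1)\subset f^{-1}((-\infty,c_1))$. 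I expect the only real obstacle to be bookkeeping of this openness issue; everything else is a direct citation of Lemma~\ref{lemma: key} together with standard Morse-theoretic facts (existence of proper functions, Sard's theorem, the regular value theorem).
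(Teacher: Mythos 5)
Your overall strategy is the same as the paper's: exhaust $X$ by compact sublevel (or interlevel) sets of a proper function, pick a regular value so that $W$ is a compact manifold with boundary containing $\gamma(S^1)$ in its interior, and cite Lemma~\ref{lemma: key}. That part is fine. The gap is in your resolution of the openness issue, which you correctly identify as the one delicate point but then handle incorrectly. Your final proposal is to take two regular values $c_1<c_2$, run Lemma~\ref{lemma: key} on $W=f^{-1}((-\infty,c_2])$ to get $U'$, and use $V:=U'\cap f^{-1}((-\infty,c_1))$. But nothing guarantees that $V$ is still homotopy equivalent to a $1$-complex: the deformation retraction of $U'$ onto the $1$-skeleton $T_1$ of the triangulation of $W$ does not restrict to $V$ (the retraction lines can leave $f^{-1}((-\infty,c_1))$), and $T_1\cap f^{-1}((-\infty,c_1))$ is not a deformation retract of $V$ for any stated reason. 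Cutting an open set with a sublevel set can change its homotopy type, so this step needs an argument and none is given. Your earlier parenthetical fix is also off target: the retraction of $U'=W\setminus D'$ onto $T_1$ does \emph{not} stay in $W^{\circ}$, since the triangulation $T$ triangulates all of $W$ and $T_1$ meets $\partial W$; the ambient isotopy $\Gamma$ in Lemma~\ref{lemma: key} only moves $D$ off $\gamma(S^1)$ and has nothing to do with keeping the retraction away from the boundary.

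The correct fix --- and the one the paper uses --- is your first instinct, which you abandoned: set $U:=U'\cap W^{\circ}$. The point is not that the retraction onto $T_1$ preserves $W^{\circ}$; it is that $U'$, being open in the manifold-with-boundary $W$, is itself a manifold with boundary whose manifold-interior is exactly $U'\cap W^{\circ}$, and the inclusion of the interior of a manifold with boundary into the manifold is a homotopy equivalence (collar neighbourhood theorem). Hence $U=U'\cap W^{\circ}\simeq U'\simeq T_1$, and $U$ is open in $X$ because $W^{\circ}$ is. With that one sentence added, your argument closes up and agrees with the paper's proof.
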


\begin{proof}
Let $f: X \rightarrow \mathbb{R}$ be a proper Morse function and let $a_k, b_k \in \mathbb{R}$ be regular values of $f$ such that $\lim_{k \to \infty} a_k=- \infty$ and $\lim_{k \to \infty} b_k=+\infty$. Since the image of $\gamma$ is compact, there exists $n>>0$ such that $ \gamma(S^1) \subset f^{-1}(]a_n,b_n[)$. We consider \[W:=f^{-1}([a_n,b_n]),\] which is a compact manifold with boundary which contains $\gamma(S^1)$ in its interior. 
By Lemma \ref{lemma: key} there exists an open neighbourhood $\tilde{U}\subset W$ which contains $\gamma(S^1)$ and is homotopy equivalent to a CW-complex of dimension $1$. We claim that
\[ U:= \tilde{U} \cap f^{-1}((a_n,b_n))\]
satisfies the conditions of the lemma. By construction $\gamma(S^1) \subset U$, which is open in $X$.
Also, $\tilde{U}$ is a manifold with boundary with interior $U$ and therefore the inclusion $U \hookrightarrow \tilde{U}$ is a homotopy equivalence. We conclude that $U$ is homotopy equivalent to a CW-complex of dimension $1$. 
\end{proof}

\begin{remark}
By the same arguments one sees that the image of a smooth map $f: M\to X$, with $M$ a compact manifold of dimension $m$,
is contained in an open neighborhood $U\subset X$ that is homotopy equivalent to a CW-complex of dimension $m$.
\end{remark}

\begin{corollary}\label{corollary: trivialon1}
The image of every smooth loop $\gamma: S^1\to X$ is contained in an open subset $U\subset X$ which has
the property that the restriction of an arbitrary complex vector bundle $E\to X$ to $U$ is trivializable.
\end{corollary}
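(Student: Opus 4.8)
The plan is to deduce the statement from Lemma \ref{lemma: skeleton} together with the classical fact that complex vector bundles over a $1$-dimensional $CW$-complex are trivial.

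First I would apply Lemma \ref{lemma: skeleton} to the loop $\gamma$ to obtain an open subset $U\subseteq X$ containing $\gamma(S^1)$ together with a homotopy equivalence $h\colon U\to Z$ onto a $CW$-complex $Z$ of dimension $1$. Since $U$ is an open subset of a manifold it is paracompact, so vector bundles over $U$ are invariant under homotopy of classifying maps. Choosing a homotopy inverse $k\colon Z\to U$ of $h$, for any complex vector bundle $E\to X$ we then get
\[ E|_U\;\cong\;(k\circ h)^*(E|_U)\;\cong\;h^*\bigl(k^*(E|_U)\bigr),\]
so that $E|_U$ is the pullback along $h$ of a complex vector bundle on $Z$.

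Next I would verify that every complex vector bundle of rank $n$ over the $1$-complex $Z$ is trivializable. This is done by extending a trivialization cell by cell: the bundle is trivial over the discrete $0$-skeleton, and over each closed $1$-cell (an interval) it is again trivial, so the two chosen trivializations at the endpoints of a $1$-cell differ by an element of $GL_n(\mathbb{C})$, which can be joined to the identity because $GL_n(\mathbb{C})$ is path-connected; hence the local trivializations glue to a global one. (Equivalently, isomorphism classes of such bundles form the set $[Z,BU(n)]$, which is a point because $\dim Z=1$ while $BU(n)$ is simply connected.) Applying this to $k^*(E|_U)$ and pulling back along $h$ shows that $E|_U$ is trivial, which is exactly the claim.

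I do not expect a genuine obstacle in this argument; the one point worth flagging is that it really uses that the bundles are \emph{complex}, since the analogous assertion fails for real bundles over $S^1$ (the M\"obius bundle). Concretely, what makes a single open set $U$ work simultaneously for \emph{every} complex vector bundle on $X$ is the connectedness of $GL_n(\mathbb{C})$ — equivalently the vanishing of the relevant obstruction group $H^1(Z;\pi_0 GL_n(\mathbb{C}))$ — which is already guaranteed once $U$ is homotopy equivalent to a $1$-dimensional complex, independently of $E$.
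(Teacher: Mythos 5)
Your proposal is correct and follows the same route as the paper: invoke Lemma \ref{lemma: skeleton} to place $\gamma(S^1)$ inside an open set homotopy equivalent to a $1$-dimensional CW-complex, then use the connectedness of $GL_n(\mathbb{C})$ to trivialize any complex bundle over such a complex. The paper's proof is just a two-line version of your argument; your additional details (homotopy invariance of pullbacks over paracompact bases, the cell-by-cell gluing) are exactly the standard facts being implicitly used.
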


\begin{proof}
By Lemma \ref{lemma: skeleton}, the image of $\gamma$ is contained in an open subset $U\subset X$
which is homotopy equivalent to a finite one-dimensional CW-complex.
Since $GL_n(\mathbb{C})$ is connected, any complex vector bundle over a finite, one-dimensional CW-complex is trivializable.
\end{proof}

We can now state the main result of this section:

\begin{theorem}\label{theorem: automorphism}
Let $E\rightarrow X$ be a flat graded vector bundle and denote by $\pi: \loop X\rightarrow X$ the natural projection given by evaluation at $1\in S^1$. Then $\pi^*E$ is a flat graded vector bundle over $\loop X$ which has a distinguished automorphism $\hol(E) \in \Aut(\pi^*E)$, characterized by the property that
if $U \subseteq X$  is an open subset over which $E$ trivializes, i.e.
$E\vert_U \cong U\times V$, we have
\[ \hol(E)|_{\loop U} = \sigma_{\End V}(1\otimes  e^{\alpha}),\]
where $d_\nabla=d +\alpha$.
Moreover, if $f: Y \rightarrow X$ is a smooth map then
\[ \hol(f^*E) =(\loop f)^*\hol(E),\]
with $\loop f: \loop Y \to \loop X$ given by $\loop f(\gamma):=f\circ \gamma$.
\end{theorem}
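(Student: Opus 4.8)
The plan is to define $\hol(E)$ locally via the twisted iterated integral map and then glue, using the gauge-invariance results of \S\ref{subsection: formal holonomies - normalized bar} to check that the local pieces agree on overlaps. First I would fix a good open cover $\{U_\alpha\}$ of $X$ by open sets over which $E$ trivializes; this makes sense because we only need to know $\hol(E)$ on $\loop U$ for $U$ ranging over such a cover, and by Corollary \ref{corollary: trivialon1} the sets $\loop U_\alpha$ cover $\loop X$ (every smooth loop lies in some trivializing $U_\alpha$). Over each $U_\alpha$ choose a trivialization $E|_{U_\alpha}\cong U_\alpha\times V_\alpha$; the flat $\mathbb{Z}$-graded connection becomes $d+[\alpha_\alpha,-]$ for a Maurer--Cartan element $\alpha_\alpha\in\MC(\Omega^\bullet(U_\alpha)\otimes\End V_\alpha)$. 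Applying Corollary \ref{corollary: formal holonomies} with $A=\Omega^\bullet(U_\alpha)$, $K=\End V_\alpha$, and $\varphi=\sigma$ the iterated integral map (whose kernel contains $\Deg(\Omega^\bullet(U_\alpha))$ by Theorem \ref{theorem: Chen1}), I obtain an element $\hol_\alpha:=\sigma_{\End V_\alpha}(1\otimes e^{\alpha_\alpha})\in\Omega^\bullet(\loop U_\alpha)\otimes\End V_\alpha$ satisfying $d_{\pi^*\nabla}\hol_\alpha=0$, i.e.\ a covariantly constant section of $\End(\pi^*E)|_{\loop U_\alpha}$. One checks it lives in degree $0$ and is invertible by inspecting its leading term $1\otimes\id$ and nilpotence of the higher terms (which raise the \emph{form}-degree, bounded by $\dim\loop U_\alpha$-type considerations along each compact family of loops), so $\hol_\alpha\in\Aut(\pi^*E|_{\loop U_\alpha})$.

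The key step is gluing: on $\loop(U_\alpha\cap U_\beta)$ the two trivializations differ by a transition function $g_{\alpha\beta}\in\Omega^0(U_\alpha\cap U_\beta)\otimes\mathrm{Iso}(V_\beta,V_\alpha)$, under which $\alpha_\beta=\alpha_\alpha\bullet g_{\alpha\beta}$ in the sense of Remark \ref{gauge action}. By Proposition \ref{proposition: gauge-invariance} (or Corollary \ref{corollary: gauge-invariance}) we get $\sigma_{\End V_\beta}(1\otimes e^{\alpha_\beta})=g_{\alpha\beta}^{-1}\,\sigma_{\End V_\alpha}(1\otimes e^{\alpha_\alpha})\,g_{\alpha\beta}$, which is exactly the statement that $\hol_\alpha$ and $\hol_\beta$ represent the same section of $\End(\pi^*E)$ over $\loop(U_\alpha\cap U_\beta)$ after accounting for the change of trivialization. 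Hence the local pieces patch to a global $\hol(E)\in\Aut(\pi^*E)$, and the characterizing property holds by construction. Uniqueness is immediate since the cover can be refined arbitrarily and the value on each $\loop U$ is prescribed.

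Finally, naturality under a smooth map $f:Y\to X$ follows by choosing trivializing opens for $E$ on $X$, pulling them back to trivializing opens for $f^*E$ on $Y$, and using that $\sigma$ is natural with respect to $f$ in the sense that the square relating $\sigma$ on $\barC\Omega^\bullet(X)$, $\sigma$ on $\barC\Omega^\bullet(Y)$, $f^*$ and $(\loop f)^*$ commutes (this is built into the definition of $\sigma$ via the evaluation maps, since $\ev\circ(\mathrm{id}_{\Delta_k}\times\loop f)=f^{\times(k+1)}\circ\ev$); the connection form pulls back as $f^*\alpha_\alpha$, so $\hol(f^*E)|_{\loop f^{-1}(U_\alpha)}=(\loop f)^*\hol(E)|_{\loop U_\alpha}$, and these glue.

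The main obstacle I anticipate is the gluing step, specifically making precise the passage from the \emph{formal} identity in $\N(\Omega^\bullet)\hat\otimes K$ furnished by Proposition \ref{proposition: gauge-invariance} to an honest equality of differential forms on $\loop(U_\alpha\cap U_\beta)$ with values in $\End(\pi^*E)$; this requires knowing that $\sigma$ descends to the quotient $\N$ (Theorem \ref{theorem: Chen1}, using connectedness of each piece --- one should take the $U_\alpha$ connected, harmless after refinement) and carefully tracking how the identification $\End(\pi^*E)|_{\loop U_\alpha}\cong\loop U_\alpha\times\End V_\alpha$ interacts with conjugation by $g_{\alpha\beta}$. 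A secondary technical point is verifying invertibility of $\hol_\alpha$ uniformly, i.e.\ that on any compact family of loops only finitely many terms of the series $\sigma_{\End V_\alpha}(1\otimes e^{\alpha_\alpha})$ contribute, so that the inverse is again a finite sum of iterated integrals.
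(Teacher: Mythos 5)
Your overall strategy coincides with the paper's: define $\hol(E)$ locally by $\sigma_{\End V}(1\otimes e^{\alpha})$, use Corollary \ref{corollary: formal holonomies} together with Theorem \ref{theorem: Chen1} to see that it is parallel for the pull back connection, invoke Corollary \ref{corollary: gauge-invariance} (again with Theorem \ref{theorem: Chen1}, which identifies $\ker\sigma$ with $\Deg(\Omega^\bullet(U))$) to see that the local definition is independent of the trivialization, cover $\loop X$ by the opens $\loop U$ via Corollary \ref{corollary: trivialon1} and glue, and deduce the naturality statement from the naturality of Chen's map. All of this matches the paper's argument.

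The one step that would fail as written is your treatment of convergence and invertibility. The series $\sigma_{\End V}(1\otimes e^{\alpha})$ does \emph{not} reduce to finitely many terms on a compact family of loops: the form-degree-zero component of $\sigma_{\End V}(1\otimes \alpha^{\otimes k})$ is the $k$-fold iterated integral of the $1$-form part of $\alpha$, which is generically nonzero for \emph{every} $k$; summed over $k$ this is exactly the classical Chen/Dyson series for parallel transport. Hence the $0$-form part of $\hol(E)$ is an honestly infinite sum, it is not of the form ``identity plus nilpotent,'' and your proposed uniform-finiteness argument breaks down. The paper handles this in two steps: first, for any plot $f: U\to\loop X$ the pulled-back series converges in the $\mathcal{C}^\infty$ Whitney topology (the detailed estimates are delegated to Igusa), which defines $\hol(E)$ as a diffeological form; second, invertibility is reduced to invertibility of the $0$-form component alone, and that component is the ordinary holonomy of the underlying degree-one connection, hence invertible by the classical theory rather than by nilpotence. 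Only the components of strictly positive form-degree are nilpotent (since $\hol(E)$ has total degree $0$ and the internal degrees occurring in $\End V$ are bounded), which is why the reduction to the $0$-form component suffices.
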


\begin{proof}
Let us first consider the case of a trivial graded vector bundle $M\times V$ with flat $\mathbb{Z}$-graded connection given by a form $\alpha$.
By definition, we have
\[ \hol(E):= \sigma_{\End V}(1\otimes e^\alpha).\]
We notice that, strictly speaking, the right hand side of the above definition is a series of differential forms
on $\loop X$.
Its convergence is discussed carefully in \cite{Igusa}.
From the point of view of the diffeological structure on $\loop X$,
any plot $f: U\to \loop X$ yields a series of differential forms
$$f^*\sigma_{\End V}(1\otimes \alpha) + f^*\sigma_{\End V}(1\otimes \alpha^{\otimes 2}) + \cdots$$ 
on $U$ with values in $\End V$.
Inspection of the defining formulas of $f^*\sigma_{\End V}(1\otimes e^\alpha)$ shows that this series converges in the $\mathcal{C}^\infty$ Whitney-topology. One can define $f^*\sigma_{\End V}(1\otimes e^\alpha)$ as the limit and check that one thus obtains a differential form
on $\loop X$ in the diffeological sense.

In view of Lemma \ref{lemma: MC}, Corollary \ref{corollary: formal holonomies} and Theorem \ref{theorem: Chen1}, we know that $ \hol(E)$ satisfies
\[ d \hol(E)+[\pi^*\alpha, \hol(E)]=0,\]
and therefore defines an endomorphism of the flat bundle $(\pi^*E,d+\pi^*\alpha)$. In order to check that $\hol(E)$  is invertible, it suffices to check that the $0$-form component is so. This is true because it is the ordinary holonomy of the underlying connection. 
Notice that Corollary \ref{corollary: gauge-invariance} and Theorem \ref{theorem: Chen1} imply that the definition of $\hol(E)$ for trivializable graded bundles is independent of the chosen trivialization.
Corollary \ref{corollary: trivialon1}, implies that $\loop X$ can be covered by open subsets of the form $\loop U$ where 
$U\subseteq X$ is an open such that $E\vert_U$ trivializes. Since $\hol(E)$ is well defined over each open $\loop U$, and these definitions agree on the intersections $\loop U\cap \loop U' = \loop (U\cap U')$,
 it is globally well defined.

The remaining claim follows from naturality of Chen's iterated integral map.
\end{proof}

\begin{remark}
\hspace{0cm}
\begin{itemize}
\item[(1)]
The naturality of the holonomy under pull backs along smooth maps
is not imposed in the definition, but follows from the fact
that it is specified on sufficiently small opens
of the loop space.
\item[(2)] We remark that $\hol(E)$ is well defined for
any graded vector bundle $E$ that comes equipped with a $\mathbb{Z}$-graded connection --
i.e. the local defining formulas make perfect sense regardless of whether or not
the $\mathbb{Z}$-graded connection is flat.
The flatness of the $\mathbb{Z}$-graded connection
only enters in the verification that $\hol(E)$
is constant with respect to the pull back connection on $\pi^*E$.
\end{itemize}
\end{remark}

\subsection{Inverting the holonomy $=$ reversing the loops}\label{subsection: inverting}

For later use it will be helpful to work out a more explicit description
of the inverse to $\hol(E)$. As in the case of ordinary holonomies,
the inverse is given by considering the holonomy of the reversed loop.
We denote the map which reverses a loop by $r$, i.e.
$$ r: \loop X\to \loop X, \quad r(\gamma)(e^{i t}):=\gamma(e^{-it}).$$

\begin{remark}\label{remark: factorization}
Let us consider the space of pairs of composable loops, i.e.
pairs of loops with the same base point. It is given by the fibre product
$\loop X \times_X \loop X$ and is the domain of three interesting maps:
$$
\xymatrix{
& \loop X \times_X \loop X \ar[rr]^c \ar[dr]_{\mathrm{pr}_2} \ar[dl]^{\mathrm{pr}_1}&& \loop X\\
 \loop X && \loop X. &
}
$$
Here $c$ denotes the concatenation map\footnote{Strictly speaking, concatenation might lead us out of the space of smooth loops. One way to circumvent this problem is to define $\loop X$ as the space of loops which are constant in a neighborhood of $1\in S^1$.}
$$ c(\gamma,\gamma') := \begin{cases}
\gamma(2t) & \textrm{for } t\le \frac{1}{2},\\
\gamma'(2t-1) & \textrm{for } t\ge \frac{1}{2}.
\end{cases}$$
 and
$\mathrm{pr}_1$ and $\mathrm{pr}_2$ are the projection maps.

In the following, we will rely on two important properties of
iterated integrals with regard to these maps, which were established by Chen in \cite{Chen}, see also the exposition in \cite{AZ}:
\begin{enumerate}
\item For all $\alpha_1, \dots, \alpha_k \in \Omega^\bullet(X)$,
the following identity holds in $\Omega^\bullet(\loop X \times_X \loop X)$:
$$ c^*\sigma(1\otimes \alpha_1\otimes \cdots \otimes \alpha_k) = \sum_{i=0}^k\mathrm{pr}_1^*\sigma(1\otimes \alpha_1\otimes \cdots \otimes \alpha_i) \wedge \mathrm{pr}_2^*\sigma(1\otimes \alpha_{i+1}\otimes \cdots \otimes \alpha_k).$$
\item  For all $\alpha_1,\dots,\alpha_k \in \Omega^\bullet(X)$,
the following identity holds in $\Omega^\bullet(\loop X)$:
$$ (c \circ(\mathrm{id}\times r))^*\sigma(1\otimes \alpha_1\otimes \cdots \otimes \cdots \alpha_k)=0.$$
\end{enumerate}

\end{remark}

For the holonomy automorphism, these properties imply the following result:

\begin{proposition}\label{proposition: factorization}
Let $E$ be a graded vector bundle over $X$, which comes equipped with a $\mathbb{Z}$-graded
connection.
\begin{enumerate}
\item The holonomy $\hol(E)$ of $E$ satisfies the following factorization property:
$$ c^*\hol(E) = \mathrm{pr}_1^*\hol(E)\wedge \mathrm{pr}_2^*\hol(E).$$
\item The pull back of $\hol(E)$ along the reversal map $r$ is 
the inverse of $\hol(E)$, i.e.
$$ \hol(E) \wedge r^*\hol(E) = \mathrm{id} = r^*\hol(E) \wedge \hol(E).$$
\end{enumerate}
\end{proposition}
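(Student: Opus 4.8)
\textbf{Proof proposal for Proposition \ref{proposition: factorization}.}

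The plan is to reduce both statements to the two properties of Chen's iterated integrals recorded in Remark \ref{remark: factorization}, handled locally by means of Corollary \ref{corollary: trivialon1}. First I would observe that a smooth plot $f: U \to \loop X \times_X \loop X$ has image whose two projections are swept by finitely many smooth loops; by the argument underlying Corollary \ref{corollary: trivialon1} (applied to the compact $M=U$ after shrinking $U$, or directly loop-by-loop and then patching), every point of $\loop X \times_X \loop X$ has a neighbourhood of the form $\loop U \times_U \loop U$ with $E|_U \cong U \times V$ trivializable. Since all three maps $c, \mathrm{pr}_1, \mathrm{pr}_2$ restrict to such opens compatibly, and since $\hol(E)$ is defined locally over $\loop U$ by $\sigma_{\End V}(1 \otimes e^\alpha)$, it suffices to prove both identities in the trivialized situation, i.e.\ as identities of (convergent series of) differential forms with values in $\End V$.

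For item (1), in the trivialization write $\hol(E)|_{\loop U} = \sum_{k \ge 0}\sigma_{\End V}(1 \otimes \alpha^{\otimes k})$. The twisted iterated integral $\sigma_{\End V}$ is, by construction, $\sigma$ composed with the shuffle-type bookkeeping of the $\End V$-factors via $\phi_{\End V}$; applying $c^*$ and using property (1) of Remark \ref{remark: factorization} termwise turns $c^*\sigma_{\End V}(1 \otimes \alpha^{\otimes k})$ into $\sum_{i=0}^{k}\mathrm{pr}_1^*\sigma_{\End V}(1 \otimes \alpha^{\otimes i}) \wedge \mathrm{pr}_2^*\sigma_{\End V}(1 \otimes \alpha^{\otimes (k-i)})$, where the matrix multiplication of the two $\End V$-valued factors matches the concatenation order of the loops. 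Summing over $k \ge 0$ and reindexing gives exactly $\mathrm{pr}_1^*\hol(E) \wedge \mathrm{pr}_2^*\hol(E) = c^*\hol(E)$; convergence of the rearranged series is controlled by the same Whitney-topology estimate used in the proof of Theorem \ref{theorem: automorphism}, applied on an arbitrary plot.

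For item (2), note that $r^*\hol(E)$ is, in the trivialization, $\sum_{k\ge 0}\sigma_{\End V}(1 \otimes \alpha^{\otimes k})$ pulled back along $r$; combining property (2) of Remark \ref{remark: factorization} with item (1), I would compute $(c\circ(\mathrm{id}\times r))^*\hol(E) = \hol(E) \wedge r^*\hol(E)$ on one hand, and on the other hand property (2) of Remark \ref{remark: factorization} shows $(c\circ(\mathrm{id}\times r))^*\sigma_{\End V}(1 \otimes \alpha^{\otimes k}) = 0$ for all $k \ge 1$, while the $k=0$ term gives the constant form $\id$. Hence $\hol(E) \wedge r^*\hol(E) = \id$; the identity $r^*\hol(E) \wedge \hol(E) = \id$ follows symmetrically (replacing $\gamma$ by $r(\gamma)$ and using $r^2 = \id$), and together with invertibility of $\hol(E)$ already established in Theorem \ref{theorem: automorphism} this identifies $r^*\hol(E)$ as the two-sided inverse. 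The main obstacle I anticipate is purely bookkeeping: tracking the Koszul signs produced by $\phi_{\End V}$ under the shuffle/decomposition of property (1) and confirming they assemble into the correct graded matrix product on $\End V$; the analytic point (termwise pullback and rearrangement of the convergent series) is routine given the estimates from Theorem \ref{theorem: automorphism}.
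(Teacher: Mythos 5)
Your proposal is correct and follows essentially the same route as the paper: reduce to a trivializing open via Corollary \ref{corollary: trivialon1}, write $\hol(E)$ locally as $\sigma_{\End V}(1\otimes e^{\alpha})$, and apply the two iterated-integral identities of Remark \ref{remark: factorization} termwise (the paper's own proof is just a terser version of this, leaving the reindexing, convergence, and sign bookkeeping implicit). The only elaboration you add beyond the paper is the explicit observation that composable loops share a base point, so a single trivializing open for the concatenated loop covers the whole local computation on $\loop U\times_U\loop U$.
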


\begin{proof}
The proof of the first claim proceeds by reduction to the local case.
Locally, $\hol(E)$ is given by $\sigma_{\End(V)}(1\otimes e^\alpha)$
for some differential form $\alpha$ with values in the endomorphism algebra of a graded vector space. The factorization then follows from 
part 1. of Remark \ref{remark: factorization}.
In the same fashion, one obtains the second claim from part 2. of the same remark.
\end{proof}

\subsection{Automorphism of the pull back functor}\label{subsection: automorphisms - functor version}

In \S\ref{subsection: automorphisms - automorphisms} we constructed an automorphism $\hol(E)\in \Aut(\pi^*E,\pi^*D)$ in the dg category $\FlatZ(\loop X)$ of flat graded vector bundles over $\loop X$. This automorphism is given by the higher holonomies of the flat $\mathbb{Z}$-graded vector bundle $E$. We will now show that this construction lifts to the categorical level, i.e. we will construct a natural automorphism of the pull back functor
$\pi^*: \FlatZ(X) \to \FlatZ(\loop X)$.

\begin{remark}
Pull back along any smooth map $f: Y\to X$ induces a dg functor
$$f^*: \FlatZ(X) \to \FlatZ(Y).$$
In particular, the evaluation map 
$\pi: \loop X \to X, \quad \gamma \mapsto \gamma(1)$
yields a dg functor
$$ \pi^*: \FlatZ(X) \to \FlatZ(\loop X).$$
\end{remark}

\begin{definition}\label{definition: Ainfty-transformations}
Let $F$, $G: \C \to \D$ be dg functors between dg categories.
An $\Ainfty$-transformation $\mu: F \Longrightarrow G$ is the following data:
\begin{itemize}
\item An assignment $\rho_0: \Ob \C \to \mathsf{Z}^0\Hom(F(-),G(-))$ and
\item A collection of linear maps $\rho_r: \s \Hom(A_{r-1},A_r)\otimes \cdots \otimes \s \Hom(A_0,A_1) \to \Hom(F(A_{0}),G(A_{r}))$ of degree $0$ for $r>0$,
\end{itemize}
such that for all composable chains of morphisms $(f_r,\dots,f_1)$ in $\C$ the relation
\begin{eqnarray*}
G(f_r) \circ \rho_{r-1}(f_{r-1},\dots,f_{1}) - (-1)^{\#} \rho_{r-1}(f_r,\dots,f_{2}) \circ F(f_1)
=
\rho_\bullet b(f_r,\dots, f_1) - d \rho_r(f_r,\dots, f_1)
\end{eqnarray*}
is satisfied where $\#$ is $|f_2|+\dots +|f_r|-r+1$.
\end{definition}

\begin{remark}\label{remarkfunctor}
%If $F,G:\C\rightarrow \D$, and $L:\D \rightarrow \D'$ are dg functors and $\rho: 
%f \Longrightarrow G$ is an $\Ainfty$-transformation then $L \circ \rho: L \circ F \Longrightarrow L\circ G$ is also an $\Ainfty$-transformation.
%We note for later use that 
Given two dg categories $\C$ and $\D$, one
can form a category
$\dgFun_{\Ainfty}(\C,\D)$ whose objects are the dg functors form $\C$ to $\D$
and whose morphisms are $\Ainfty$-transformations. The composition
of two $\Ainfty$-transformations $\rho: F \Longrightarrow G$ and $\lambda: G\Longrightarrow H$
is given by 
$$ (\lambda \circ \rho)_r := \sum_{i=0}^r \lambda_i \circ \rho_{r-i}.$$
An $\Ainfty$-transformation $\rho$ from $F$ to itself is called an $\Ainfty$-automorphism
if it is invertible in $\dgFun_{\Ainfty}(\C,\D)$.

\end{remark}

\begin{theorem}\label{theorem: automorphism - categorical}
Let $X$ be a manifold and 
$\pi^*: \FlatZ(X) \to \FlatZ(\loop X)$ be the dg functor induced by pull back along the evaluation at $1\in S^1$. There is an $\Ainfty$-automorphism
$$\underline{\hol}: \pi^* \Longrightarrow  \pi^*,$$
characterized by the property that
if $\iota :U \hookrightarrow  X$  is the inclusion of an open set on which every vector bundle trivializes, then the $\Ainfty$-transformation:
\[ (\loop \iota)^* \circ \underline{\hol}: (\loop \iota)^* \circ \pi^* \Longrightarrow (\loop \iota)^* \circ \pi^*,\]
is described as follows with respect to trivializations:
\[( \loop \iota)^* \circ \underline{\hol}(E(0)) \cong \sigma_{\End V(0)}(1\otimes  e^{\alpha_0}),\]
\[(\loop \iota)^* \circ \underline{\hol}(\phi_r,\dots,\phi_1)\cong \sigma_{\End{W}}(1\otimes e^{ \alpha_r} \otimes \beta_r \otimes e^{ \alpha_{r-1}} \otimes \cdots \otimes e^{ \alpha_1} \otimes  \beta_1 \otimes e^{ \alpha_0}).\] 

Here $ E(0),\dots ,E(r)$ are objects in $ \FlatZ(X)$, $\phi_i\in \Hom(E(i-1),E(i))$ and we have chosen
trivializations $ \iota^* (E(i))\cong U \times V(i)$ such that $ D(i):= d + \alpha_i$ and $\phi_i=\beta_i$ where $\alpha_i \in \Omega^\bullet(U) \otimes \End(V(i)) $, $\beta_i \in \Omega^\bullet(U)\otimes \Hom(V(i-1), V(i))$ and $W=\bigoplus_i V(i)$.

\end{theorem}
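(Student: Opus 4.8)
The plan is to construct $\underline{\hol}$ by the same local-to-global strategy used in the proof of Theorem~\ref{theorem: automorphism}, and then to check the $\A_\infty$-transformation axioms by reducing them to the formal identities of \S\ref{section: formal holonomies}. First I would fix the local model: over an open $U\subseteq X$ on which all bundles in play trivialize, an object $E(i)$ becomes $(U\times V(i), d+\alpha_i)$ and a morphism $\phi_i$ becomes a form $\beta_i\in\Omega^\bullet(U)\otimes\Hom(V(i-1),V(i))$. Setting $K=\End(W)$ with $W=\bigoplus_i V(i)$ and $\alpha:=\sum_i\alpha_i$ (viewed as an element of $\Omega^\bullet(U)\otimes K$ of degree $1$), together with the off-diagonal elements $\beta_i$, the proposed formulas $\sigma_{\End V(0)}(1\otimes e^{\alpha_0})$ and $\sigma_{\End W}(1\otimes e^{\alpha_r}\otimes\beta_r\otimes\cdots\otimes\beta_1\otimes e^{\alpha_0})$ are honest (convergent) differential forms on $\loop U$ by exactly the convergence argument recalled in the proof of Theorem~\ref{theorem: automorphism}. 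So $\rho_0(E(0))$ is a degree-$0$ element of $\Hom(\pi^*E(0),\pi^*E(0))$ and $\rho_r(\phi_r,\dots,\phi_1)$ a degree-$0$ element of $\Hom(\pi^*E(0),\pi^*E(r))$, as required by Definition~\ref{definition: Ainfty-transformations}.

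Next I would verify the defining relation of an $\A_\infty$-transformation locally. The key observation is that the whole tower of formulas is the image under $\sigma_{\End W}$ (equivalently, under $\varphi_K$ with $\varphi=\sigma$, $K=\End W$) of the single bar-complex element
\[
\Xi := 1\otimes e^{\alpha_r}\otimes\beta_r\otimes e^{\alpha_{r-1}}\otimes\cdots\otimes\beta_1\otimes e^{\alpha_0},
\]
and that $b\Xi$, computed via the $b_0+b_1$ formula, produces precisely the combination of: (i) terms where a $\beta_i$ is differentiated or where two adjacent $\beta$'s are multiplied --- these assemble the Hochschild/bar differential $b(\phi_r,\dots,\phi_1)$ of the chain of morphisms, giving the $\rho_\bullet b(f_r,\dots,f_1)$ and $d\rho_r$ terms; and (ii) boundary terms where $\beta_1$ absorbs the rightmost $e^{\alpha_0}$ block on the left, or $\beta_r$ absorbs the leftmost $e^{\alpha_r}$ block, which after applying $\varphi_K$ become $G(f_r)\circ\rho_{r-1}(\dots)$ and $\rho_{r-1}(\dots)\circ F(f_1)$ respectively, with the sign $\#$ coming from the Koszul bookkeeping in Proposition~\ref{proposition: formal holonomies}. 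Concretely, Corollary~\ref{corollary: formal holonomies} applied to $\beta=\alpha$ (the flat connection on $W$) shows $d\sigma_{\End W}(1\otimes e^\alpha)+[\pi^*\alpha,\sigma_{\End W}(1\otimes e^\alpha)]=0$; the mixed element $\Xi$ is handled by the same algebra, so that the failure of $\sigma_{\End W}(\Xi)$ to be $b$-closed is exactly the sum of the four boundary/composition terms in Definition~\ref{definition: Ainfty-transformations}. The signs are the only genuinely delicate point, and I would pin them down by tracking the degree shifts in $\phi_K$ from Proposition~\ref{proposition: formal holonomies}.

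Then I would globalize. By Corollary~\ref{corollary: trivialon1}, $\loop X$ is covered by opens $\loop U$ with $U\subseteq X$ such that all bundles restrict trivially over $U$; over such $\loop U$ the formulas above define $\underline{\hol}$. Gauge-invariance --- Corollary~\ref{corollary: gauge-invariance} (and its morphism-level analogue, obtained by letting $g\in U_0(\Omega^0(U)\otimes\End W)$ act by conjugation, noting that an element of $U_0$ preserving the block decomposition induces simultaneous changes of trivialization on all the $E(i)$) --- shows that the local formulas are independent of the chosen trivializations, hence agree on overlaps $\loop U\cap\loop U'=\loop(U\cap U')$. Naturality of Chen's $\sigma$ under smooth maps then gives the stated compatibility $(\loop\iota)^*\circ\underline{\hol}$, and in particular shows $\underline{\hol}$ is pulled back correctly along $\loop f$ for any smooth $f$. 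Invertibility of $\underline{\hol}$ in $\dgFun_{\Ainfty}(\FlatZ(X),\FlatZ(\loop X))$ follows, as in Theorem~\ref{theorem: automorphism}, from invertibility of $\rho_0(E(0))$ (its $0$-form part is ordinary holonomy, hence invertible); one can either invoke the standard fact that an $\A_\infty$-transformation with invertible $\rho_0$ is invertible, or construct the inverse explicitly using the reversed-loop description from Proposition~\ref{proposition: factorization}.

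The main obstacle I anticipate is the sign bookkeeping: matching the Koszul signs produced by $b$ acting on the mixed tensor $\Xi$ (where the inserted factors $\beta_i$ have arbitrary total degree) against the sign $\#=|f_2|+\cdots+|f_r|-r+1$ and the twisted-differential signs in Definition~\ref{definition: Ainfty-transformations}. Everything else is a bookkeeping extension of results already proved; the conceptual content is entirely contained in Proposition~\ref{proposition: formal holonomies}, Corollary~\ref{corollary: formal holonomies}, Corollary~\ref{corollary: gauge-invariance}, and Chen's theorem, but assembling the morphism-level statement requires keeping careful track of which $e^{\alpha_i}$ blocks merge into which $\beta_j$'s under $b_1$.
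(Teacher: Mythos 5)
Your proposal follows the same architecture as the paper's proof: define $\underline{\hol}$ locally by the stated iterated-integral formulas, verify the $\A_\infty$-transformation relation by reducing it to the formal identities of \S\ref{section: formal holonomies}, glue via Corollary \ref{corollary: gauge-invariance} and Corollary \ref{corollary: trivialon1}, and obtain invertibility from the reversed-loop description. The one place where the paper's argument differs in a way that matters is exactly the point you flag as your main obstacle. Rather than computing $b\Xi$ directly on the mixed tensor $\Xi = 1\otimes e^{\alpha_r}\otimes\beta_r\otimes\cdots\otimes\beta_1\otimes e^{\alpha_0}$ and matching Koszul signs by hand, the paper introduces formal variables $\tau_1,\dots,\tau_r$ of degree $1-|\beta_i|$, forms the single degree-$1$ element $\gamma=\alpha_0+\cdots+\alpha_r+\beta_1\tau_1+\cdots+\beta_r\tau_r$ in $\Omega^\bullet(U)\otimes\End W\otimes F_\tau$ with $F_\tau$ the free graded algebra on the $\tau_i$, and applies Corollary \ref{corollary: formal holonomies} once to $1\otimes e^\gamma$; your $\Xi$ and the entire $\A_\infty$ relation, signs included, are then read off as the coefficient of $\tau_r\tau_{r-1}\cdots\tau_1$. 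This device turns the delicate sign bookkeeping into a formal consequence of a result already proved, and the same trick is reused for invertibility by extracting the $\tau_r\cdots\tau_1$ coefficient of $\sigma_{\End W\otimes F_\tau}(1\otimes e^\gamma)\wedge r^*\sigma_{\End W\otimes F_\tau}(1\otimes e^\gamma)=1$. Your direct computation would work, but you would have to carry out precisely the verification you anticipate being painful; if you adopt the formal-variable packaging, the gap closes essentially for free.
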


\begin{proof}
In view of Corollary \ref{corollary: trivialon1} we know that there is at most one $\Ainfty$-transformation
which has a local expression as above. Let us check locally that the expression above satisfies
the conditions of an $\Ainfty$-transformation. The fact that 
\[ \sigma_{\End V(0)}(1\otimes  e^{\alpha_0}),\]
provides  an automorphism of $\pi^*E$ is guaranteed by Theorem \ref{theorem: automorphism}.
In order to prove that the equations for an $\Ainfty$-transformations are satisfied, we consider the differential form 
 \[ \gamma:= \alpha_0 + \cdots +\alpha_r+   \beta_1\tau_1+\cdots+ \beta_r\tau_r \in \Omega^{\bullet}(U) \otimes \End W\otimes F_\tau,\]
where $\tau_1, \dots, \tau_r$ are formal variables of degree $1-|\beta_i|$ and $F_\tau$ is the free
graded (but not graded commutative!) algebra on these variables.
By Corollary  \ref{corollary: formal holonomies}, we know that:
\[ [(\pi^* \otimes \mathrm{id})(\gamma),\sigma_{\End W\otimes F_\tau}(1\otimes e^{\gamma})]=\sigma_{\End W\otimes F_\tau} b(1\otimes e^{ \gamma}) -d\sigma_{\End W\otimes F_\tau}(1\otimes e^{\gamma}), \]
which is an equation in $\Omega^\bullet(\loop X)\otimes \End W\otimes F_\tau$.
As the components of this equation in front of $\tau_r \tau_{r-1}\cdots \tau_1 \in F_\tau$ we find:
\[\pi^*(\phi_r) \circ \underline{\hol}_{r-1}(\phi_{r-1},\dots,\phi_{1}) - (-1)^{|\phi_2|+\cdots +|\phi_r|-r+1}\underline{\hol}_{r-1}(\phi_r,\dots,\phi_{2}) \circ \pi^*(\phi_1)
= [\underline{\hol}_r,d](\phi_r,\dots, \phi_1).\]

This is precisely the condition required from an $\Ainfty$-transformation.
Corollary \ref{corollary: gauge-invariance} implies that the expression above is gauge invariant and therefore the local transformations glue to a global one.

It remains to prove that $\underline{\hol}$ is invertible.
We define a new $\A_\infty$-transformation
$r^*\underline{\hol}$ which is the composition of
$\underline{\hol}$ and the pull back along the reversal map
$r:\loop X \to \loop X$.
The fact that $r^*\underline{\hol}$ is an $\A_\infty$-transformation
is easy to check.
The claim that $r^* \underline{\hol}$ is inverse to $\underline{\hol}$ is a formal consequence of the fact that
the pull back $r^*\hol(E)$ along the reversal map
$r:\loop X\to \loop X$ is inverse to $\hol(E)$: if we apply this to the formal connection
$\gamma$ from above, we obtain the following equation in
$\Omega^\bullet(\loop X)\otimes \End W\otimes F_\tau$:
$$ \sigma_{\End W\otimes F_\tau}(1\otimes e^\gamma) \wedge r^*\sigma_{\End W\otimes F_\tau}(1\otimes e^\gamma) = 1.$$
If we extract the coefficient in front of $\tau_r \tau_{r-1}\cdots \tau_1$, this
yields
$$ \sum_{i=0}^r \underline{\hol}(\phi_r,\dots, \phi_{i+1}) \wedge r^*\underline{\hol}(\phi_i, \dots, \phi_1) = 0$$
provided $r>0$, which is the desired result.

\end{proof}

\section{Restriction to the based loop space}\label{section: based loops}

\subsection{Representations of the Pontryagin algebra}\label{subsection: based loops - Pontryagin product}

Let $* \hookrightarrow X$ be a pointed manifold.

\begin{definition}
The space of based loops $\basedloop X$ of $(X,*)$ is
the space of all smooth maps from $S^1\subset \mathbb{C}$ to $X$
that map a neighborhood of $1$ to the base point $*$.
\end{definition}

\begin{remark}
The diffeological space $\basedloop X$ comes equipped with 
the concatenation map $c$, which is the restriction of 
the concatenation map $c: \loop X\times_X \loop X \to \loop X$
which was considered in \S \ref{subsection: inverting}.
We notice that concatenation is not associative, but
this can be fixed by passing to Moore loops.
\end{remark}

\begin{definition}
The space of based Moore loops $\basedloop^\Moore X$ of $(X,*)$
is the space of pairs $(\gamma,l)$, where $l\in \mathbb{R}_{\ge 0}$ and
$\gamma: \mathbb{R}_{\geq 0}\rightarrow X$ is a smooth map which takes the value $*$ in a neighbourhood of $0$ and also in the interval $ (l-\epsilon,\infty) $ for some $\epsilon >0$. The based Moore loop space inherits the subspace topology and diffeology from $ C^{\infty}(\mathbb{R}_{\geq 0},X) \times \mathbb{R}$.
\end{definition}

\begin{remark}
\begin{enumerate}
\item
The space $\basedloop^\Moore X$ has the structure of a topological and diffeological monoid with concatenation product $c$ defined by
\[ c((\gamma, l),(\gamma',l')):= (\gamma \ast \gamma', l+l')\]
where
$$
(\gamma \ast \gamma')(t) \mapsto \begin{cases}
\gamma(t) & \textrm{ for } t\leq l\\
\gamma'(t-l) & \textrm{ for } t\geq l'. 
\end{cases}
$$
There is a natural  homotopy equivalence $q: \basedloop^\Moore X\rightarrow \basedloop X$ defined by:
\[q(\gamma,l)(e^{2 \pi i\theta}):= \gamma( l\theta), \text{ for } \theta \in [0,1].\]
This map is also compatible with the diffeological structure.
\item
Since  $\basedloop^\Moore X$ is a topological monoid,
its singular chain complex has the structure of a dg Hopf algebra. We will only be interested in the differential and the multiplication, which is given by the composition
$$
\xymatrix{
\chains(\basedloop^\Moore X)\otimes \chains(\basedloop^\Moore X) \ar[rr]^(0.55){\EZ}&&
\chains(\basedloop^\Moore X \times \basedloop^\Moore X) \ar[rr]^(0.6){c_*} &&
\chains(\basedloop^\Moore X),
}
$$
where $\EZ$ is the Eilenberg-Zilber map defined by
\begin{eqnarray*}
 \EZ(\mu \otimes \nu)(t_1,\cdots, t_{r+s}) &:=& \sum_{\chi \in \Sigma_{(r,s)}}(-1)^{|\chi|}(\EZ_\chi(\mu\otimes\nu))(t_1,\cdots,t_{r+s})\\
 &=& \sum_{\chi \in \Sigma_{(r,s)}} (-1)^{|\chi|}(\mu(t_{\chi(1)},\cdots,t_{\chi(r)}),\nu(t_{\chi(r+1)},\cdots,t_{\chi(r+s)})).
\end{eqnarray*}
We refer to this multiplication as the Pontryagin product.

\item The dg algebra $\chains(\basedloop^\Moore X)$ comes equipped with a natural augmentation
map $\epsilon$, which is induced by the map from
$\basedloop^\Moore X$ to a point.
\end{enumerate}
\end{remark}

\begin{definition}
The iterated integral map 
$$\sigma: \HC_\bullet (\Omega^\bullet(X)) \rightarrow \Omega^\bullet(\basedloop X)$$ is defined as follows.
Let $\ev: \Delta_k \times \basedloop X \rightarrow X^{\times k}$ be the  evaluation map:
\[ \ev(t_1,\cdots, t_k, \gamma):= (\gamma(e^{2\pi i t_1}), \cdots, \gamma(e^{2 \pi i t_k})).\]
The map $\sigma$ is the composition
\[ \xymatrix{
(\s\Omega^\bullet(X))^{\otimes k} \ar[r]^(0.55)\cong & \Omega^\bullet(X)^{\otimes k} \ar[r]^{\iota}&  \Omega^\bullet(X^{\times k}) \ar[r]^(0.4){\ev^*}& \Omega^\bullet(\Delta_k \times \basedloop X) \ar[r]^(0.6){\int_{\Delta_k}}& \Omega^\bullet(\basedloop X).}\]
\end{definition}

\begin{remark}
There is a natural map:
\[ \barC \Omega^\bullet(X) \rightarrow \HC_\bullet (\Omega (X)), \quad \alpha_0 \otimes \cdots \otimes \alpha_r \mapsto (\alpha_0\vert_*) \alpha_1 \otimes \cdots \otimes \alpha_r\]
which makes the following diagram commute
$$
\xymatrix{
\barC (\Omega^\bullet(X)) \ar[r]^\sigma \ar[d]& \Omega^\bullet( \loop X)\ar[d]^{\iota^*}\\
\HC_\bullet ( \Omega(X)) \ar[r]^\sigma& \Omega^\bullet(\basedloop X)}.$$
In particular, if $E$ is a trivial graded vector bundle with flat $\mathbb{Z}$-graded connection $D=d+\alpha$, then
the restriction $\hol_*(E)\in \Omega^\bullet( \basedloop X) \otimes \End E\vert_*$  of $\hol(E)$ to the based loop space is given by the formula
\[ \hol_*(E):=\sigma_{\End E\vert_*} ( e^\alpha). \]
\end{remark}

\begin{theorem}[Chen \cite{Chen}]\label{theorem: Chen2}
 The iterated integral map
\[ \sigma: \HC_\bullet(\Omega^\bullet(X)) \rightarrow \Omega^\bullet(\basedloop X)\]
is a morphism of  dg algebras. 
Moreover, if $X$ is simply-connected, then $\sigma$ induces an isomorphism
\[ \HH_{\bullet}(\Omega^\bullet(X)) \cong H^\bullet(\basedloop X).\]
\end{theorem}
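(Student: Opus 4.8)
The statement is Chen's theorem \cite{Chen}, so I only sketch the strategy I would follow. The plan is to treat separately the algebraic claim --- that $\sigma$ is a morphism of dg algebras --- and the geometric claim --- that $\sigma$ is a quasi-isomorphism when $X$ is simply-connected --- the latter being the real content.

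\textbf{$\sigma$ is a morphism of dg algebras.} For the chain-map property I would invoke Stokes' theorem for the fibre integration $\int_{\Delta_k}$: writing $\omega=\ev^*\iota(\alpha_1\otimes\cdots\otimes\alpha_k)$, one has $d\int_{\Delta_k}\omega=\int_{\Delta_k}d\omega\pm\int_{\partial\Delta_k}\omega$. The interior term reproduces $b_0$ (the de Rham differential in each slot). The boundary of the simplex $\{0\le t_1\le\cdots\le t_k\le1\}$ is the union of the collision faces $\{t_i=t_{i+1}\}$, on which $\ev$ factors through the diagonal in the $(i,i+1)$ pair of slots so that the boundary integral becomes $\sigma$ of $\cdots\otimes(\alpha_i\wedge\alpha_{i+1})\otimes\cdots$ --- the interior terms of $b_1$ --- together with the two endpoint faces $\{t_1=0\}$ and $\{t_k=1\}$, on which $\gamma(1)=*$ collapses the outermost form to the constant $\alpha_1\vert_*=\epsilon(\alpha_1)$, respectively $\alpha_k\vert_*$, producing the outer terms of $b_1$; the count of faces ($k+1$) matches the number of terms of $b$ exactly. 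For multiplicativity I would use $\int_{\Delta_p}(-)\wedge\int_{\Delta_q}(-)=\int_{\Delta_p\times\Delta_q}(-)$ together with the standard subdivision of $\Delta_p\times\Delta_q$ into the top-dimensional simplices indexed by $(p,q)$-shuffles (the interleavings of the two blocks of time parameters); on each such simplex $\ev_p\times\ev_q$ agrees with $\ev_{p+q}$ after the corresponding reordering, so summing over shuffles recovers precisely the shuffle product on $\HC_\bullet(\Omega^\bullet(X))$. Both points reduce to bookkeeping of Koszul signs, using that the pieces of $\partial(\Delta_p\times\Delta_q)$ other than the shuffle simplices have measure zero.

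\textbf{$\sigma$ is a quasi-isomorphism for $X$ simply-connected.} Here $\HC_\bullet(\Omega^\bullet(X))$ is the two-sided bar construction $\mathbb{B}(\mathbb{R},\Omega^\bullet(X),\mathbb{R})$, so its cohomology is $\mathrm{Tor}_{\Omega^\bullet(X)}(\mathbb{R},\mathbb{R})$; by the de Rham theorem this coincides with $\mathrm{Tor}_{C^\bullet(X;\mathbb{R})}(\mathbb{R},\mathbb{R})$, which the Eilenberg--Moore theorem (valid since $X$ is simply-connected) identifies with $H^\bullet(\basedloop X)$. The plan is to show that $\sigma$ realizes this composite abstract isomorphism geometrically, by comparing it with the Eilenberg--Moore spectral sequence of the path--loop fibration $\basedloop X\hookrightarrow\Path_*X\to X$ (with $\Path_*X$ contractible). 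First I would replace $\Omega^\bullet(X)$ by a Sullivan minimal model $(\Lambda V,d)$ with $V$ concentrated in degrees $\ge 2$ --- legitimate because Hochschild homology is invariant under cdga quasi-isomorphisms and $\sigma$ transports along them --- so that the bar-length filtration $F^s\HC_\bullet=\bigoplus_{k\ge s}(\s\bar A)^{\otimes k}$ is bounded in each total degree, converges, and has $E_2=\mathrm{Tor}_{H^\bullet(X)}(\mathbb{R},\mathbb{R})$. Transporting this filtration along $\sigma$ to $\Omega^\bullet(\basedloop X)$ yields a de Rham model for the Eilenberg--Moore filtration, whose spectral sequence converges strongly to $H^\bullet(\basedloop X)$ precisely because $X$ is simply-connected, and on $E_2$ the induced map is an isomorphism, being the levelwise (simplicial) de Rham comparison for the classifying-space construction $\mathbb{B}(*,X,*)$. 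Convergence of both spectral sequences then promotes this $E_2$-isomorphism to the asserted isomorphism $\HH_\bullet(\Omega^\bullet(X))\cong H^\bullet(\basedloop X)$.

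\textbf{Main obstacle.} The algebraic part is routine; the hard step is the last one --- identifying the filtration carried over from the bar complex through $\sigma$ with the Eilenberg--Moore filtration, and invoking the Eilenberg--Moore/Dress convergence theorem, which is exactly where simple-connectedness is indispensable (for general $X$, $\sigma$ is still a dg algebra map but fails to be a quasi-isomorphism, since $H^\bullet(\basedloop X)$ then sees $\pi_1$). A more hands-on alternative that bypasses the abstract convergence statement is to exploit naturality and multiplicativity of $\sigma$: via a Mayer--Vietoris argument for good covers and the Leray--Hirsch principle along $\Path_*X\to X$ one reduces the claim to $X=S^n$ with $n\ge 2$, where it is a finite-dimensional computation; in that approach the difficulty reappears in the inductive bootstrap along a cell decomposition of $X$.
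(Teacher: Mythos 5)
The paper does not prove this statement at all: Theorem \ref{theorem: Chen2} is imported verbatim from Chen's work via the citation \cite{Chen}, exactly as with Theorem \ref{theorem: Chen1}, so there is no in-paper argument to compare against. Judged on its own terms, your outline is a correct reconstruction of the standard proof. The dg algebra part is right: Stokes' theorem for $\int_{\Delta_k}$ turns the $k+1$ codimension-one faces of $\{0\le t_1\le\cdots\le t_k\le 1\}$ into the $k+1$ terms of $b_1$ (the collision faces giving $\alpha_i\wedge\alpha_{i+1}$, the endpoint faces giving evaluation at the basepoint, which is the augmentation action on the trivial bimodule $\mathbb{R}$ and hence vanishes on $\bar{A}$), while the shuffle decomposition of $\Delta_p\times\Delta_q$ gives multiplicativity. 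The quasi-isomorphism part via $\mathrm{Tor}_{\Omega^\bullet(X)}(\mathbb{R},\mathbb{R})$ and the Eilenberg--Moore spectral sequence of $\basedloop X\hookrightarrow \Path_* X\to X$ is the modern route (closer to Getzler--Jones--Petrack than to Chen's original induction on the bar filtration via formal power series connections), and you correctly isolate the two genuine delicacies: convergence of the bar spectral sequence, for which $H^1=0$ is what makes the bar-length filtration locally bounded, and the identification of the bar filtration with the geometric Eilenberg--Moore filtration under $\sigma$. One small caution: the step ``replace $\Omega^\bullet(X)$ by a Sullivan minimal model and transport $\sigma$ along it'' should be phrased as invariance of $\HH_\bullet$ under cdga quasi-isomorphism applied to the source only, since $\sigma$ itself is defined geometrically on $\Omega^\bullet(X)$ and does not literally transport; this does not affect the argument.
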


Observe that the map $ q: \basedloop^\Moore X \rightarrow \basedloop X$ does not respect concatenation. Nevertheless,
the following holds:

\begin{lemma}\label{proposition: holonomies and Moore loops}
On the image of Chen's iterated integral map
$$\sigma: \HC_\bullet(\Omega^\bullet(X)) \to \Omega^\bullet(\basedloop X)$$
the maps $ c^*\circ q^*$ and  $(q\times q)^* \circ c^*$
coincide.
\end{lemma}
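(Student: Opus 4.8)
The statement compares two ways of transporting a Chen iterated integral on $\basedloop X$ along the concatenation map and the homotopy equivalence $q\colon \basedloop^\Moore X\to\basedloop X$: on one side we pull back first by $q$ and then by the Moore-loop concatenation $c$, on the other we pull back first by $c$ and then by $q\times q$. The plan is to reduce the assertion to the explicit factorization formula for iterated integrals under concatenation (part 1.\ of Remark \ref{remark: factorization}), which holds on the space of composable \emph{smooth} loops, and to check that it survives under the reparametrization $q$. Concretely, one uses that the concatenation diagram for Moore loops maps, via $q$ and via $q\times q$, to the concatenation diagram for (ordinary, but unparametrized-by-$S^1$) loops up to a homotopy that is constant in a neighborhood of the basepoint, and that $\sigma$ only depends on the loop up to orientation-preserving reparametrization — a property visible directly from the defining formula of $\sigma$, since a monotone change of variables in the simplex integral $\int_{\Delta_k}$ does not change the value.

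\textbf{Key steps.} First I would spell out the two maps $\basedloop^\Moore X\times_X\basedloop^\Moore X\to\basedloop X$ given by $q\circ c$ and $c\circ(q\times q)$ and observe that they differ only by reparametrization of the resulting loop: $q(c((\gamma,l),(\gamma',l')))$ traverses $\gamma$ on $[0,\tfrac12]$ and $\gamma'$ on $[\tfrac12,1]$ after rescaling both by the total length $l+l'$, whereas $c(q(\gamma,l),q(\gamma',l'))$ traverses $\gamma$ on $[0,\tfrac12]$ at speed $1/l$ and $\gamma'$ on $[\tfrac12,1]$ at speed $1/l'$; these agree precisely when $l=l'$ and in general differ by a piecewise-linear, orientation-preserving homeomorphism of $S^1$ fixing $1$. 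Second I would invoke the reparametrization-invariance of $\sigma$ on $\HC_\bullet(\Omega^\bullet(X))$: for $\rho\colon S^1\to S^1$ an orientation-preserving diffeomorphism fixing $1$, and any $\gamma$, the evaluation maps $\ev$ used in the definition of $\sigma$ differ only by the substitution $t_i\mapsto\rho(t_i)$ in the integration over $\Delta_k$, so by the change-of-variables formula (or its mild extension to piecewise-smooth $\rho$, which is where one uses that loops are constant near $1$) one gets $\sigma(\mu)(\gamma\circ\rho)=\sigma(\mu)(\gamma)$. Third, combining these two points, both $q\circ c$ and $c\circ(q\times q)$ produce, for each pair of composable Moore loops, loops that are reparametrizations of one another by such a $\rho$, hence they pull $\sigma(\mu)$ back to the same differential form on $\basedloop^\Moore X\times_X\basedloop^\Moore X$ — which is exactly the claim $c^*q^*\sigma(\mu)=(q\times q)^*c^*\sigma(\mu)$. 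One should carry this out compatibly with the diffeology, i.e.\ check it after pulling back along an arbitrary plot, but that is routine once the pointwise statement is in hand.

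\textbf{Main obstacle.} The genuine subtlety is that the reparametrizations $\rho$ appearing here are only \emph{piecewise} smooth (linear on each half of $S^1$, with a corner at $-1$), so the naive change-of-variables argument for $\sigma$ needs care: one must either work with the subspace of loops that are locally constant near the breakpoint (so that the form $\ev^*(\cdots)$ vanishes near the corner and the substitution causes no boundary contribution), or split $\Delta_k$ into the chambers cut out by the hyperplanes $\{t_i=\tfrac12\}$ and apply a smooth change of variables on each chamber separately, then reassemble. Making this precise — and verifying that it matches the concatenation factorization formula of Remark \ref{remark: factorization} rather than producing a spurious correction term — is the heart of the argument; everything else is bookkeeping with the explicit formula for $q$ and the diffeological pullbacks.
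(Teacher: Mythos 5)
Your proof is correct and follows essentially the same route as the paper: both identify the explicit piecewise-linear reparametrization of $S^1$ relating $q\circ c$ and $c\circ(q\times q)$ and then invoke the reparametrization-invariance of Chen's iterated integrals (which the paper simply cites from \cite{AS}, \S 3, rather than re-deriving as you sketch). The only quibble is a harmless bookkeeping slip: it is $c\circ(q\times q)$, not $q\circ c$, that traverses $\gamma$ on $[0,\tfrac12]$, while $q\circ c$ traverses it on $[0,\tfrac{l}{l+l'}]$ at uniform speed $l+l'$.
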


\begin{proof}
Out of two Moore loops
$(\gamma,l)$ and $(\gamma',l')$ in $X$, one can form the two
loops $q(c((\gamma,l),(\gamma',l')))$ and $c(q(\gamma,l),q(\gamma',l'))$.
We notice that these two loops are related by the following
piecewise linear bijection of the unit interval (we identify $S^1$ with $\mathbb{R}/\mathbb{Z})$:
$$
\tau(t):=\begin{cases}
t \frac{2l}{l+l'} & \quad \textrm{ for } t\le \frac{1}{2}\\
\frac{l}{l+l'} + (2t-1)\frac{l'}{l+l'} & \quad \textrm{ for } t \ge \frac{1}{2}
\end{cases},
$$
that is $$
q(c((\gamma,l),(\gamma',l')))\circ \tau = c(q(\gamma,l),q(\gamma',l')).$$

It is well-known that such reparametrization of $S^1$ leave iterated integrals unchanged. A discussion of this fact can for instance be found in \cite{AS}, \S 3 -- and in particular Definition 3.11, Lemma 3.12 and Remark 3.13 in loc.cit.
In the case at hand, this means that
for any plot $\hat{f}\times \hat{g}: U \to \basedloop^\Moore X \times \basedloop^\Moore X$, and any collection of differential forms
$\alpha_1,\dots,\alpha_k \in \Omega^\bullet(X)$, the equality
$$ (q \circ  c\circ \hat{f}\times \hat{g})^*\sigma(\alpha_1,\dots,\alpha_k)= (c \circ q\times q \circ \hat{f}\times \hat{g})^*\sigma(\alpha_1,\dots,\alpha_k)$$
holds.
This completes the proof.

\end{proof}

Suppose now that $E\to X$ is a flat graded vector bundle. Theorem \ref{theorem: automorphism} asserts that
the pull back of $(E,D)$ along the projection 
$$\pi: \loop X \to X, \quad \gamma \mapsto \gamma(1)$$
comes equipped with a natural automorphism $\hol(E) \in \Omega^\bullet(\loop X, \End E)$.
If we restrict this element to $\basedloop X$, the bundle $\pi^*E$ trivializes
and we obtain an element
$$ \hol_*(E) \in \Omega^\bullet(\basedloop X, \End E\vert_*)\cong \Omega^\bullet(\basedloop X) \otimes \End E\vert_*.$$

\begin{proposition}\label{proposition: holonomy on based loops}
Let $c$ be the concatenation map on $\basedloop^\Moore X$, $q: \basedloop^\Moore X \to \basedloop X$ the map defined above and $\mathrm{pr}_1, \mathrm{pr}_2: \basedloop^\Moore X\times \basedloop^\Moore  X\to \basedloop^\Moore X$  the natural projection maps. Then the identity
$$ c^*(q^*\hol_*(E)) = \mathrm{pr}_1^*(q^*\hol_*(E)) \wedge \mathrm{pr}_2^*(q^*\hol_*(E))$$
holds.
\end{proposition}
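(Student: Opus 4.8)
The plan is to reduce the statement to the case of a trivial bundle and then assemble it from two facts that are already in place: the reparametrization--invariance recorded in Lemma \ref{proposition: holonomies and Moore loops} and the factorization of the holonomy established in Proposition \ref{proposition: factorization}. Both sides of the asserted identity are globally defined forms on $\basedloop^\Moore X\times\basedloop^\Moore X$ with values in $\End E\vert_*$, so it suffices to check equality after restriction to the opens $\basedloop^\Moore U\times\basedloop^\Moore U$, where $U$ runs over the opens of $X$ on which $E$ trivializes. These opens cover the product: given based Moore loops $(\gamma,l)$ and $(\gamma',l')$, the concatenation $c((\gamma,l),(\gamma',l'))$ is a single based Moore loop, so by Corollary \ref{corollary: trivialon1} its image -- which is compact and contains $*$ -- lies in some trivializing open $U$; hence $(\gamma,l),(\gamma',l')\in\basedloop^\Moore U$. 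So I may assume $E=X\times V$ with flat $\mathbb{Z}$-graded connection $d+\alpha$, in which case $\hol_*(E)=\sigma_{\End V}(e^\alpha)=\sum_{k\ge 0}\sigma_{\End V}(\alpha^{\otimes k})$.

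Next I would invoke Lemma \ref{proposition: holonomies and Moore loops}. Writing $\alpha=\sum_j\omega_j\otimes M_j$ with $\omega_j\in\Omega^\bullet(X)$ and $M_j\in\End V$, each $\sigma_{\End V}(\alpha^{\otimes k})$ is, up to Koszul signs, a finite $\End V$-linear combination of forms $\sigma(\omega_{j_1}\otimes\cdots\otimes\omega_{j_k})$ lying in the image of Chen's iterated integral map $\sigma\colon\HC_\bullet(\Omega^\bullet(X))\to\Omega^\bullet(\basedloop X)$. Lemma \ref{proposition: holonomies and Moore loops} applies to each such scalar form, and since $c^*\circ q^*$ and $(q\times q)^*\circ c^*$ are continuous algebra homomorphisms they agree on the convergent series $\hol_*(E)$. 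This yields $c^*(q^*\hol_*(E))=(q\times q)^*(c^*\hol_*(E))$, where $c$ on the right-hand side denotes the concatenation on $\basedloop X$, i.e. the restriction of $c\colon\loop X\times_X\loop X\to\loop X$ considered in \S\ref{subsection: inverting}.

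Then I would bring in Proposition \ref{proposition: factorization}(1). Every based loop is based at $*$, so $\basedloop X\times\basedloop X$ sits inside $\loop X\times_X\loop X$, and restricting the identity $c^*\hol(E)=\mathrm{pr}_1^*\hol(E)\wedge\mathrm{pr}_2^*\hol(E)$ along this inclusion -- recalling $\hol_*(E)$ is precisely the restriction of $\hol(E)$ to the based loop space -- gives $c^*\hol_*(E)=\mathrm{pr}_1^*\hol_*(E)\wedge\mathrm{pr}_2^*\hol_*(E)$ on $\basedloop X\times\basedloop X$. Applying $(q\times q)^*$, and using $\mathrm{pr}_i\circ(q\times q)=q\circ\mathrm{pr}_i$ together with multiplicativity of pullback, one obtains $(q\times q)^*(c^*\hol_*(E))=\mathrm{pr}_1^*(q^*\hol_*(E))\wedge\mathrm{pr}_2^*(q^*\hol_*(E))$. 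Combined with the previous paragraph, this is exactly the asserted identity over each $\basedloop^\Moore U\times\basedloop^\Moore U$; since $c$, $q$ and the $\mathrm{pr}_i$ are natural, the local identities glue to the global one.

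I do not expect a genuine obstacle here: the analytic content -- invariance of iterated integrals under reparametrization, and Chen's factorization formula for $\sigma$ -- has already been isolated in Lemma \ref{proposition: holonomies and Moore loops} and Proposition \ref{proposition: factorization}. The only points demanding care are bookkeeping ones: keeping the concatenation on $\basedloop^\Moore X$ distinct from the one on $\basedloop X$, tracking how $q$ mediates between them, and the routine verification that the trivializing opens $\basedloop^\Moore U$ cover $\basedloop^\Moore X$ (and hence $\basedloop^\Moore X\times\basedloop^\Moore X$).
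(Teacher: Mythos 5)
Your proof is correct and follows essentially the same route as the paper's: combine the factorization property from Proposition \ref{proposition: factorization}, restricted to the based loop space, with the compatibility of $q^*$ and concatenation on the image of Chen's map from Lemma \ref{proposition: holonomies and Moore loops}. The only difference is that you spell out the local-trivialization and term-by-term reduction needed to place $\hol_*(E)$ in the image of $\sigma$, which the paper leaves implicit.
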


\begin{proof}
Recall that by Proposition \ref{proposition: factorization} in \S \ref{subsection: inverting}, $\hol(E)$ satisfies the factorization property
$$c^*\hol(E) = \mathrm{pr}_1^*\hol(E) \wedge \mathrm{pr}_2^*\hol(E).$$
This implies that the same factorization holds for the restriction
of $\hol(E)$ to $\basedloop X$.
Since by Lemma \ref{proposition: holonomy on based loops} pulling back along $q^*$ is compatible with the concatenation map
on the image of Chen's map $\sigma$, the claimed
factorization for $q^*\hol_*(E)$ follows.
\end{proof}

\begin{theorem}\label{theorem: reps of Pontryagin algebra from automorphisms}
The linear map
\begin{eqnarray*}
\widehat{\hol}_*(E): \chains(\basedloop^\Moore X) \to \End E\vert_*, \quad
\nu \mapsto (-1)^{\frac{(k+1)(k+2)}{2}}\int_{\Delta_k} \nu^*(q^*\hol_*(E))
\end{eqnarray*}
is a morphism of dg algebras. Here $k$ is the dimension of the simplex $\nu$ and $\End E\vert_*$ is seen as a dg algebra with differential $[\partial,-]$,
where $\partial$ is the restriction of the flat $\mathbb{Z}$-graded connection to the base point.
\end{theorem}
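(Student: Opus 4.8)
The plan is to extract the two structural features of the $\End E\vert_*$-valued form $\Phi := q^*\hol_*(E)$ on $\basedloop^\Moore X$ that make the statement hold, and then to obtain the chain-map property from Stokes' theorem and the multiplicativity from the classical shuffle triangulation of a product of simplices. First, one checks the formula is well posed: for a smooth singular $k$-simplex $\nu\colon \Delta_k\to\basedloop^\Moore X$ the pull back $\nu^*\Phi$ has a well-defined form-degree-$k$ component, and this component is a \emph{finite} sum, since in each fixed form-degree the series $\hol_*(E)=\sum_{j\ge 0}\sigma_{\End E\vert_*}(\alpha^{\otimes j})$ has only finitely many non-zero terms (form-degrees being bounded by $\dim X$); hence $\int_{\Delta_k}\nu^*\Phi$ is unambiguous, and linear extension defines $\widehat{\hol}_*(E)$ on $\chains(\basedloop^\Moore X)$. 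It annihilates degenerate simplices, as pulling back along a collapse lowers form-degree, and it has degree $0$ because $\Phi$ has total degree $0$, so a $k$-simplex (of cohomological degree $-k$) is sent to an element of degree $-k$ in $\End E\vert_*$.

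The two properties of $\Phi$ used below are: \emph{(horizontality)} $(d+[\partial,-])\Phi = 0$, which follows by restricting the identity $d\hol(E)+[\pi^*\alpha,\hol(E)]=0$ of Theorem~\ref{theorem: automorphism} to $\basedloop X$ --- where $\pi$ is the constant map to $*$, so $\pi^*\alpha$ reduces to the value $\partial$ of the form-degree-zero part of the connection form at $*$ --- and then pulling back along the smooth map $q$; and \emph{(factorization)} $c^*\Phi = \mathrm{pr}_1^*\Phi\wedge \mathrm{pr}_2^*\Phi$ on $\basedloop^\Moore X\times_X\basedloop^\Moore X$, which is precisely Proposition~\ref{proposition: holonomy on based loops} and is what compensates for $q$ not being a morphism of monoids. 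Lastly, evaluated at the constant loop $\hol_*(E)=\sigma_{\End E\vert_*}(e^\alpha)$ reduces to $\id$, because the iterated integrals of $\alpha^{\otimes j}$, $j\ge 1$, vanish there.

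For the chain-map property, apply Stokes to the form-degree-$(k-1)$ part of $\nu^*\Phi$:
$$\sum_{i}(-1)^i\int_{\Delta_{k-1}}(\nu\circ\delta_i)^*\Phi = \int_{\partial\Delta_k}\nu^*\Phi = \int_{\Delta_k}\nu^*(d\Phi) = -\int_{\Delta_k}\nu^*[\partial,\Phi] = -\left[\partial,\int_{\Delta_k}\nu^*\Phi\right],$$
using horizontality and that $\partial$ is constant along $\Delta_k$; the left-hand side is, up to the normalizing sign, $\widehat{\hol}_*(E)$ evaluated on the singular boundary of $\nu$. For multiplicativity, write $\mu\cdot\nu = c_*\EZ(\mu\otimes\nu) = \sum_{\chi\in\Sigma_{(r,s)}}(-1)^{|\chi|}\,c\circ\EZ_\chi(\mu\otimes\nu)$ with $r=\dim\mu$, $s=\dim\nu$, pull back $\Phi$, and invoke factorization:
$$(c\circ\EZ_\chi(\mu\otimes\nu))^*\Phi = a_\chi^*(\mu^*\Phi)\wedge b_\chi^*(\nu^*\Phi),$$
where $(a_\chi,b_\chi)\colon\Delta_{r+s}\to\Delta_r\times\Delta_s$ ranges over the simplices of the standard triangulation of the prism $\Delta_r\times\Delta_s$. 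Summing over $\chi$ with signs $(-1)^{|\chi|}$ and integrating gives $\int_{\Delta_r\times\Delta_s}\mathrm{pr}_1^*(\mu^*\Phi)\wedge\mathrm{pr}_2^*(\nu^*\Phi)$, which Fubini identifies with $\big(\int_{\Delta_r}\mu^*\Phi\big)\big(\int_{\Delta_s}\nu^*\Phi\big)$; taking top form-degrees then gives $\widehat{\hol}_*(E)(\mu\cdot\nu) = \widehat{\hol}_*(E)(\mu)\,\widehat{\hol}_*(E)(\nu)$, and unitality ($r=s=0$) follows from the value of $\hol_*(E)$ at the constant loop.

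All the conceptual ingredients --- Stokes' theorem, horizontality of $\hol(E)$ (Theorem~\ref{theorem: automorphism}), the factorization property (Proposition~\ref{proposition: holonomy on based loops}), and the prism triangulation --- are at hand. The delicate point, and the one place where genuine care is needed, is the sign bookkeeping: one must reconcile the Eilenberg--Zilber signs $(-1)^{|\chi|}$, the orientation conventions entering Stokes and Fubini, and the Koszul signs produced by the $\End E\vert_*$-valued forms, and check that the normalizing factor $(-1)^{(k+1)(k+2)/2}$ in the statement is exactly what makes all of these cancel simultaneously in both the chain-map and the multiplicativity identities.
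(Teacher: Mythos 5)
Your proposal follows essentially the same route as the paper: the chain-map property comes from the horizontality $d\hol(E)+[\pi^*\alpha,\hol(E)]=0$ restricted to based loops, and multiplicativity comes from the factorization property of Proposition \ref{proposition: holonomy on based loops} combined with the shuffle decomposition of $\Delta_r\times\Delta_s$ (a.e.\ diffeomorphism via the maps $\psi_\chi$) and Fubini. The only thing you defer --- verifying that the Koszul sign $(-1)^{rs}$ from separating the two $\End E\vert_*$-valued factors reconciles the normalizations $\tfrac{(r+s+1)(r+s+2)}{2}$ versus $\tfrac{(r+1)(r+2)}{2}+\tfrac{(s+1)(s+2)}{2}$ --- is exactly the computation the paper carries out at the end of its proof.
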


\begin{proof}
We denote the flat $\mathbb{Z}$-graded connection on $E$ by $D$.
The map $\widehat{\hol}_*(E)$ is compatible with the differentials
since $\hol(E) \in \Omega^\bullet(\loop X, \pi^*\End(E))$
is parallel with respect to $\pi^*D$ and the restriction
of $\pi^*D$ to the based loop space is $d_{DR} + \partial_*$.

Concerning the compatibility with the products, take two chains $\nu$ and $\mu$
on $\basedloop^\Moore X$. We claim that
$$ \widehat{\hol}_*(E)(c_*\EZ(\mu\otimes\nu)) = \widehat{\hol}_*(E)(\mu)\circ \widehat{\hol}_*(E)(\nu).$$
We evaluate the left-hand side:
\begin{eqnarray*}
\widehat{\hol}_*(E)(c_*\EZ(\mu \otimes \nu)) &=&  (-1)^{\frac{(r+s)(r+s+1)}{2}}
\int_{\Delta_{r+s}} (c\circ \EZ(\mu \otimes \nu))^*(q^*\hol_*(E))\\
& =&(-1)^{\frac{(r+s+1)(r+s+2)}{2}} \sum_{\chi \in \Sigma_{(r,s)}} (-1)^{|\chi|}\int_{\Delta_{r+s}} (\EZ_\chi(\mu \otimes \nu))^*c^*(q^*\hol_*(E))\\
&=& (-1)^{\frac{(r+s+1)(r+s+2)}{2}}\sum_{\chi \in \Sigma_{(r,s)}}(-1)^{|\chi|} \int_{\Delta_{r+s}} (\EZ_\chi(\mu \otimes \nu))^* (\mathrm{pr}_1^*q^*\hol_*(E)\wedge \mathrm{pr}_2^* q^* \hol_*(E))\\ 
&=&(-1)^{\frac{(r+s+1)(r+s+2)}{2}} \int_{\Delta_r\times \Delta_s} (\mu\times \nu)^*(\mathrm{pr}_1^*q^*\hol_*(E)\wedge \mathrm{pr}_2^*q^*\hol_*(E))\\
&=& (-1)^{\frac{(r+1)(r+2)}{2}+ \frac{(s+1)(s+2)}{2}} \left(\int_{\Delta_r}\mu^*(q^*\hol_*(E))\right)\circ \left(\int_{\Delta_s}\nu^*(q^*\hol_*(E))\right)\\
&=& \widehat{\hol}_*(E)(\mu)\circ \widehat{\hol}_*(E)(\nu).
\end{eqnarray*}
To pass from the second to the third line we applied Proposition \ref{proposition: holonomy on based loops}.
The transition from the third to the forth line uses the fact that $\EZ_\chi(\sigma,\mu)$
is the composition of
$$ \psi_\chi: \Delta_{r+s} \to \Delta_r\times \Delta_s, \quad (t_1,\cdots,t_{r+s})\mapsto ((t_{\chi(1)},\cdots,t_{\chi(r)}),(t_{\chi(r+1)},\cdots,t_{\chi(r+s)}))$$
with $\sigma\times \mu: \Delta_r\times \Delta_s \to \basedloop^\Moore X \times \basedloop^\Moore X$
and that
$$ \psi: \coprod_{\chi \in \Sigma_{(r,s)}}\Delta_{r+s} \to \Delta_r\times \Delta_s, \quad \psi = \coprod_{\chi \in \Sigma_{(r,s)}}\psi_{\chi}$$
is a diffeomorphism away from a subset of measure zero.
Notice that $\psi_\chi$ is orientation preserving if and only if $\chi$ is an even permutation.

Concerning the additional sign, we observe that in the passage from the fourth to the fifth
line, we have to separate the endomorphism-valued part from the rest of the expression.
In doing so, one picks up a Koszul sign $(-1)^{rs}$, which exactly accounts for the change
from $\frac{(r+s+1)(r+s+2)}{2}$ to $\frac{(r+1)(r+2)}{2}+ \frac{(s+1)(s+2)}{2}$.
\end{proof}

\begin{remark}
In Theorem \ref{theorem: reps of Pontryagin algebra from automorphisms}, the proof of the compatibility with the products
is independent of the flatness of the $\mathbb{Z}$-graded connection
on $E$.
The flatness enters only in the compatibility with the differentials.
\end{remark}

\subsection{The $\A_\infty$-functor}

The aim of this subsection is to extend the assignment $E\mapsto \widehat{\hol}_*(E)$ to an $\A_\infty$-funtor from the dg category $\FlatZ(X)$ of flat graded vector bundles over $X$ 
to the dg category $\Rep(\chains(\basedloop^\Moore X))$ of representations of $\chains(\basedloop^\Moore X)$.

Recall from \S \ref{subsection: automorphisms - functor version}
that the assignment $E \mapsto \hol(E)$ naturally extends to an automorphism
of the dg functor $\pi^*: \FlatZ(X)\to \FlatZ(\loop X)$. More concretely,
this amounts to the existence of certain linear maps
$$ \underline{\hol}_r: \s \Hom(E_{r-1},E_r)\otimes \cdots \otimes \Hom(E_0,E_1) \to \Hom(\pi^*E_0,\pi^*E_r))=\Omega^\bullet(\loop X,\Hom(\pi^*E_0,\pi^*E_r)),$$
which serve as a system of coherent homotopies for the failure of $\hol$ to be a natural transformation
from $\pi^*$ to itself. Upon restriction to the based loop space, one obtains a family of maps
into $\Omega^\bullet(\basedloop X)\otimes \Hom(E_0\vert_*,E_r\vert_*)$.

The following lemma, which can be proved by a simple computation, is useful for replacing $\Omega^\bullet(\basedloop X)$ by the Hochschild cochain complex $\s\HC^\bullet(\chains(\basedloop^\Moore X))$.

\begin{lemma}
\hspace{0cm}
\begin{enumerate}
\item Let $Y$ be a diffeological space and consider the space $\chains(Y)$ as a complex.
There is a morphism of complexes
$$ I^Y: \Omega^\bullet(Y) \to (\s \HC^\bullet(\chains(Y)), \tilde{b}_0),$$
given by
$$ I^Y(\alpha)(\s \mu_1\otimes\cdots \otimes \s \mu_k):=  \begin{cases}
(-1)^{\frac{(|\alpha|+1)(|\alpha|+2)}{2}} \int_{\mu_1} \alpha & \textrm{if } k=1\\
0 & \textrm{otherwise}.
\end{cases}
$$
Here $\tilde{b}_0$ is the coboundary operator induced
from $b_0$ under suspension.
\item If $V$ is a complex, we obtain a morphism of complexes
$$ I^Y_V: \Omega^\bullet(Y)\otimes V \to \s \HC^\bullet(\chains(Y),V)$$
given by the composition
$$
\xymatrix{
\Omega^\bullet(Y)\otimes V \ar[r]^(0.4){I^Y\otimes \id} & \s \HC^\bullet(\chains(Y))\otimes V \ar[r]^{\cong} & \s \HC(\chains(Y),V),
}$$
where the last morphism consists of rearrangements and Koszul-signs.
\item The map $I^Y_V$ is natural in $Y$ and $V$.
\end{enumerate}
\end{lemma}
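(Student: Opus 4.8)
The plan is to verify the three assertions in order; only item~(1) involves a computation, and items~(2) and~(3) are formal consequences of it.

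For item~(1), I would first record that $I^Y(\alpha)$ is a well-defined element of $\HC^\bullet(\chains(Y))=\Hom(\HC_\bullet(\chains(Y)),\mathbb{R})$ of the correct degree: the integral $\int_{\mu_1}\alpha$ is nonzero only when the simplicial dimension of $\mu_1$ equals the form degree $|\alpha|$, so after the suspension shift $I^Y(\alpha)$ sits in degree $|\alpha|$, as it must. To see that $I^Y$ intertwines $d$ with $\tilde{b}_0$, observe that both $I^Y(d\alpha)$ and $\tilde{b}_0(I^Y(\alpha))$ are concentrated in tensor-length~$1$: the former by definition of $I^Y$, the latter because $b_0$ preserves tensor-length while $I^Y(\alpha)$ vanishes in all lengths $\neq 1$. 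It therefore suffices to evaluate both sides on a single suspended smooth simplex $\s\mu$. On such an element $b_0$ reduces, up to its built-in Koszul sign, to the singular boundary operator $\partial$, so that $\tilde{b}_0(I^Y(\alpha))(\s\mu)$ unwinds to a signed multiple of $\int_{\partial\mu}\alpha$, while $I^Y(d\alpha)(\s\mu)$ is a signed multiple of $\int_\mu d\alpha$. Since a plot pulls a diffeological form back to an ordinary form on the standard simplex, Stokes' theorem gives $\int_\mu d\alpha=\int_{\partial\mu}\alpha$, and all that remains is to check that the prefactor $(-1)^{(|\alpha|+1)(|\alpha|+2)/2}$ --- together with the suspension sign, the Koszul sign inside $b_0$, and the alternating-sum convention for $\partial\mu$ --- combines into an honest equality of signs. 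This is precisely the triangular-number normalization already appearing in Theorem~\ref{theorem: reps of Pontryagin algebra from automorphisms}, and I expect this sign bookkeeping to be the only genuine obstacle, albeit an entirely mechanical one.

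For item~(2), recall that $I^Y_V$ is by construction the composite of $I^Y\otimes\id_V$ with the canonical isomorphism $\s\HC^\bullet(\chains(Y))\otimes V\cong\s\HC^\bullet(\chains(Y),V)$, which merely permutes tensor factors and inserts Koszul signs. The first arrow is a morphism of complexes since $I^Y$ is one by item~(1) and $V$ is a complex; the second is a morphism of complexes by the standard compatibility of this permutation with $\tilde{b}_0$ and with the internal differential of $V$. Being a composite of chain maps, $I^Y_V$ is itself a chain map.

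For item~(3), naturality in $V$ is immediate, since $I^Y_V$ depends on $V$ only through linearity. For naturality in $Y$, given a smooth map $f\colon Y\to Y'$ one must see that $f^*$ on forms is intertwined with the Hochschild cochain map induced by precomposition with $f_*\colon\chains(Y)\to\chains(Y')$; evaluating on a suspended simplex $\s\mu$ of $Y$, this reduces to the identity $\int_\mu f^*\alpha=\int_{f_*\mu}\alpha$, i.e. the naturality of integration under change of variables. Together with item~(2) this yields the compatibility of the corresponding square with coefficients in $V$, completing the proof.
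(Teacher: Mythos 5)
The paper offers no proof of this lemma at all — it is stated with the remark that it ``can be proved by a simple computation'' — and your outline is exactly that computation, correctly structured: reduction to tensor-length one, Stokes' theorem on the standard simplex for item (1), formal composition of chain maps for item (2), and change of variables for item (3). The only piece you leave implicit is the explicit verification that the triangular-number prefactor absorbs the suspension and Koszul signs, which is indeed mechanical and is precisely what the authors were waving at.
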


\begin{lemma}\label{lemma: factorization}
Assume $Y$ is a diffeological monoid, with multiplication map $c$.
Equip $\chains(Y)$ with the corresponding multiplication $*$.
Moreover, let $B$ be a dg algebra and $\alpha \in \Omega^\bullet(Y)\otimes B$ a differential form on $Y$ that satisfies the factorization property
$$ c^*\alpha = \mathrm{pr}_1^*\alpha_1 \wedge \mathrm{pr}_2^*\alpha_2.$$

Then $I^Y_B(\alpha)$ satisfies
$$\tilde{b} I^{Y}_B(\alpha) + \tilde{\cup}(I^Y_B(\alpha_1)\otimes I^Y_B(\alpha_2))=0.$$
Here $\tilde{b}$ and $\tilde{\cup}$ denote the operators
corresponding to the Hochschild differential $b$ and the cup product
$\cup$ under suspension.

\end{lemma}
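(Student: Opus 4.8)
The plan is to verify the identity directly, comparing the two sides componentwise as Hochschild cochains on the dg algebra $A:=\chains(Y)$ with values in $B$. The decisive simplification is that $I^Y_B$ produces cochains concentrated in tensor-length $1$: the cochains $I^Y_B(\alpha)$, $I^Y_B(\alpha_1)$ and $I^Y_B(\alpha_2)$ all vanish on $(\s\bar A)^{\otimes k}$ for $k\neq 1$. Consequently $\tilde b\,I^Y_B(\alpha)$ is supported in tensor-lengths $0$, $1$ and $2$, whereas $\tilde\cup\bigl(I^Y_B(\alpha_1)\otimes I^Y_B(\alpha_2)\bigr)$ is supported purely in tensor-length $2$; so it suffices to check the equality separately in lengths $0$, $1$ and $2$.

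Length $0$ is vacuous. In length $1$ the two module-action terms of the Hochschild coboundary $\tilde b$ factor through the augmentation $\epsilon\colon\chains(Y)\to\mathbb{R}$ and therefore vanish on the augmentation ideal; hence on tensor-length $1$ the operator $\tilde b$ agrees with $\tilde b_0$, so the length-$1$ component of the asserted identity is exactly the chain-map property of $I^Y_B$ with respect to $\tilde b_0$ -- the $B$-coefficient version of the previous lemma -- combined with the fact that $\alpha$ is closed for the total differential on $\Omega^\bullet(Y)\otimes B$ (which is what applies to $q^*\hol_*(E)$, as it is parallel by flatness). The real content sits in tensor-length $2$, and this is where the factorization hypothesis is used. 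Evaluating on singular simplices $\mu$ of dimension $r$ and $\nu$ of dimension $s$, the length-$2$ part of $\tilde b\,I^Y_B(\alpha)$ is, up to a sign, $\int_{\Delta_{r+s}}\bigl(c\circ\EZ(\mu\otimes\nu)\bigr)^*\alpha=\int_{\EZ(\mu\otimes\nu)}c^*\alpha$, while $\tilde\cup\bigl(I^Y_B(\alpha_1)\otimes I^Y_B(\alpha_2)\bigr)$ evaluates, again up to a sign, to the product $\bigl(\int_{\Delta_r}\mu^*\alpha_1\bigr)\cdot\bigl(\int_{\Delta_s}\nu^*\alpha_2\bigr)$ in $B$. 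Substituting $c^*\alpha=\mathrm{pr}_1^*\alpha_1\wedge\mathrm{pr}_2^*\alpha_2$, and using that the shuffle maps $\psi_\chi\colon\Delta_{r+s}\to\Delta_r\times\Delta_s$ assemble, as $\chi$ ranges over $(r,s)$-shuffles, into a map that is a bijection off a set of measure zero and orientation preserving precisely when $\chi$ is even, I would reduce the first integral to $\int_{\Delta_r\times\Delta_s}\mathrm{pr}_1^*(\mu^*\alpha_1)\wedge\mathrm{pr}_2^*(\nu^*\alpha_2)$, which by Fubini equals the product above. This is precisely the manipulation already carried out in the proof of Theorem \ref{theorem: reps of Pontryagin algebra from automorphisms}.

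The only genuine labour is the sign bookkeeping: one has to keep track of the Koszul signs in the Eilenberg--Zilber map, the sign $(-1)^{|\chi|}$ attached to each shuffle, the Koszul sign $(-1)^{rs}$ incurred when the $B$-valued factor is separated from the simplicial data, and the normalization sign $(-1)^{\frac{(|\alpha|+1)(|\alpha|+2)}{2}}$ built into the definition of $I^Y$. These should combine exactly as in the proof of Theorem \ref{theorem: reps of Pontryagin algebra from automorphisms} -- with the Koszul sign $(-1)^{rs}$ absorbing the discrepancy between $\frac{(|\alpha|+1)(|\alpha|+2)}{2}$ and $\frac{(r+1)(r+2)}{2}+\frac{(s+1)(s+2)}{2}$ -- and I expect this delicate but routine verification, rather than any conceptual difficulty, to be the main obstacle.
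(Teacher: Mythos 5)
Your proposal follows essentially the same route as the paper: both reduce the identity to its tensor-length-two component, evaluate on a pair of simplices $\s\mu\otimes\s\nu$, and derive everything from $\int_{\mu*\nu}\alpha=\bigl(\int_\mu\alpha_1\bigr)\bigl(\int_\nu\alpha_2\bigr)$ via the Eilenberg--Zilber/shuffle computation already carried out in the proof of Theorem \ref{theorem: reps of Pontryagin algebra from automorphisms}, with the remaining work being exactly the sign bookkeeping you describe (the paper just records the two sign-laden evaluations and observes that they cancel). Your additional discussion of the length-zero and length-one components correctly diagnoses that the operator in the statement should really be read as $\tilde b_1$ -- which is what the paper's own proof computes and what its application uses -- so nothing essential is missing.
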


\begin{proof}
We only consider the case of trivial coefficients, the general case is analogous.
Let $\mu$ and $\nu$ be two chains on $Y$.
Computation leads to
\begin{eqnarray*}
\tilde{b}_1(I^Y(\alpha))(\s \mu\otimes \s \nu) &=& -(-1)^{|\alpha_2| + \frac{(|\alpha|+1)(|\alpha|+2)}{2}}\int_{\mu*\nu}\alpha,\\
\tilde{\cup}(I^Y(\alpha_1)\otimes I^Y(\alpha_2))(\s\mu\otimes \s \nu) &=& (-1)^{|\alpha_2|+|\alpha_1||\alpha_2|  +\frac{(|\alpha_1|+1)(|\alpha_1|+2)}{2}+\frac{(|\alpha_2|+1)(|\alpha_2|+2)}{2}} \left(\int_{\mu}\alpha_1\right)\left(\int_\nu \alpha_2\right),
\end{eqnarray*}
which sum to zero because 
$$\int_{\mu*\nu}\alpha = \left(\int_{\mu}\alpha_1\right)\left(\int_\nu \alpha_2\right),$$
as in the proof of Theorem \ref{theorem: reps of Pontryagin algebra from automorphisms}.

\end{proof}

\begin{definition}

For $r\ge 1$, we define linear maps
$$\varphi_r: \s\Hom(E_{r-1},E_r)\otimes \cdots \otimes \s \Hom(E_0,E_1)\to \s \HC^\bullet(\chains(\basedloop X),\Hom(E_0\vert_*,E_r\vert_*))$$
 of degree $0$ as the compositions $I^{\basedloop^\Moore X}_{\Hom(E_0\vert_*,E_r\vert_*)}\circ \underline{\hol}_r$.
\end{definition}

\begin{theorem}\label{theorem:restriction}
 The collection of multilinear maps $(\tilde{\varphi}_r)_{r\ge 1}$
defined
by 
$$\tilde{\varphi}_1(\s \phi):= \phi\vert_* +  \varphi_1(\s \phi), \qquad  \textrm{and} \qquad \tilde{\varphi}_r = \varphi_r \quad \textrm{for } r >1,$$
 yields an $\A_\infty$-functor from the dg category $\FlatZ(X)$ of flat $\mathbb{Z}$-graded connections
over $X$ to the dg category $\Rep(\chains(\basedloop^\Moore X))$ of representations of $\chains(\basedloop^\Moore X)$.
\end{theorem}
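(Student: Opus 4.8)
The plan is to verify directly that the data of the statement — together with the assignment on objects spelled out below — satisfies the relations of an $\Ainfty$-functor into the dg category $\Rep(A)$, where I abbreviate $A:=\chains(\basedloop^\Moore X)$ (for the notion of $\Ainfty$-functor see e.g.\ \cite{Keller}). On objects, to $E\in\FlatZ(X)$ with flat $\Z$-graded connection $D$ I associate the complex $(E\vert_*,\partial_*)$ — with $\partial_*$ the restriction of $D$ to the base point — equipped with the Maurer--Cartan element $\tau_E\in\MC_+(\HC^\bullet(A,\End E\vert_*))$ corresponding, under the bijection of \S\ref{subsection: reps of dg algebras} between $\MC_+$ and $\Ainfty$-morphisms, to the strict dg algebra map $\widehat{\hol}_*(E)\colon A\to(\End E\vert_*,[\partial_*,-],\circ)$ furnished by Theorem \ref{theorem: reps of Pontryagin algebra from automorphisms}; unitality of $\widehat{\hol}_*(E)$ places $\tau_E$ in $\MC_+$. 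I would also record that $\tilde\varphi$ is strictly unital: the local formula in Theorem \ref{theorem: automorphism - categorical} shows that $\underline{\hol}_r$ vanishes as soon as one of its arguments is an identity, because inserting the constant $0$-form $\id$ produces a bar word lying in the degenerate subcomplex $\Deg(\Omega^\bullet(U))$, which $\sigma$ annihilates (Theorem \ref{theorem: Chen1}); hence $\varphi_r(\dots,\s\id,\dots)=0$ for $r\ge1$ and $\tilde\varphi_1(\s\id_E)=\id_{E\vert_*}=\id_{\tilde\varphi(E)}$.

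For the $\Ainfty$-functor relations, the strategy is to transport the defining identity of the $\Ainfty$-automorphism $\underline{\hol}\colon\pi^*\Longrightarrow\pi^*$ — Theorem \ref{theorem: automorphism - categorical} and the displayed equation in its proof — to the based loop space, and then apply the chain map $I^{\basedloop^\Moore X}$. Three inputs are used. First, the lemma introducing the maps $I^Y$, which converts the de Rham differential on $\basedloop X$ — together with the term $[\partial_*,-]$ coming from the fact that $\pi^*D$ restricts to $d_{DR}+\partial_*$ on $\basedloop X$ — into the $b_0$-part of the Hochschild differential. Second, Theorem \ref{theorem: reps of Pontryagin algebra from automorphisms} itself, i.e.\ the statement that $\tau_E\in\MC(\HC^\bullet(A,\End E\vert_*))$; this accounts for the $\tau_{E_0}$- and $\tau_{E_r}$-twists in the differential $b_{\tau,\tau'}$ on the morphism complexes of $\Rep(A)$. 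Third, a generalized factorization property for the higher holonomies, which I would establish first.

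Concretely, I would prove that, with respect to local trivializations,
\[ c^*\,\underline{\hol}_d(\phi_d,\dots,\phi_1)\;=\;\sum_{i=0}^{d}\mathrm{pr}_1^*\,\underline{\hol}_{d-i}(\phi_d,\dots,\phi_{i+1})\wedge\mathrm{pr}_2^*\,\underline{\hol}_{i}(\phi_i,\dots,\phi_1), \]
where $\underline{\hol}_0$ with an empty chain of arguments at an object $E_k$ means $\hol(E_k)$. This follows from the local formula $\underline{\hol}_d\cong\sigma_{\End W}(1\otimes e^{\alpha_d}\otimes\beta_d\otimes\cdots\otimes\beta_1\otimes e^{\alpha_0})$ together with part~(1) of Remark \ref{remark: factorization}, by summing Chen's concatenation formula over the exponential blocks; the $i=0$ and $i=d$ terms are the boundary contributions and extend Proposition \ref{proposition: factorization}. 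Restricting to the based loop space and using Lemma \ref{proposition: holonomies and Moore loops} to pass compatibly to Moore loops, the forms $\alpha_d:=q^*\underline{\hol}_d(\phi_d,\dots,\phi_1)\vert_*$ satisfy a summed factorization along the concatenation of $\basedloop^\Moore X$, so the corresponding extension of Lemma \ref{lemma: factorization} yields
\[ \tilde{b}\,I^{\basedloop^\Moore X}(\alpha_d)\;+\;\sum_{i+j=d}\tilde{\cup}\bigl(I^{\basedloop^\Moore X}(\alpha_i)\otimes I^{\basedloop^\Moore X}(\alpha_j)\bigr)\;=\;0. \]
Here the terms with $1\le i\le d-1$ are precisely the cup products $\tilde\varphi_{d-i}\,\tilde{\cup}\,\tilde\varphi_i$ occurring on the $\Rep(A)$-side of the $d$-th relation, while the $i=0$ and $i=d$ terms reassemble into the $\tau$-twists of $b_{\tau,\tau'}$. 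Combining this identity with the $\underline{\hol}$-relation — whose remaining terms $\pi^*(\phi_d)\wedge\underline{\hol}_{d-1}(\dots)$ and $\underline{\hol}_{d-1}(\dots)\wedge\pi^*(\phi_1)$ restrict on $\basedloop X$ to left/right multiplication by $\phi_d\vert_*$ and $\phi_1\vert_*$ (only the $0$-form parts survive under $\pi^*$), and whose right-hand side produces $\tilde\varphi_d(\dots,d\phi_j,\dots)$ and $\tilde\varphi_{d-1}(\dots,\phi_{j+1}\phi_j,\dots)$ — gives the required $\Ainfty$-functor relation in degree $d$. (As for Theorems \ref{theorem: automorphism} and \ref{theorem: reps of Pontryagin algebra from automorphisms}, flatness of $D$ enters only through the compatibility of $\underline{\hol}$ with differentials.)

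Since every identity above is checked in local trivializations, gauge-invariance — Corollary \ref{corollary: gauge-invariance}, used exactly as in the proof of Theorem \ref{theorem: automorphism - categorical} — guarantees that the local relations glue to global ones, $X$ being covered by opens over which all vector bundles trivialize (Corollary \ref{corollary: trivialon1}). The main obstacle I expect is bookkeeping rather than conceptual: getting the generalized factorization of the $\underline{\hol}_r$ stated with the correct combinatorics — in particular tracking which ``dangling'' bar words produced by a concatenation split recombine into honest $\underline{\hol}$-terms and which are killed upon restriction to $\basedloop X$ — and then reconciling the Koszul signs built into $\HC^\bullet$, the cup product, the maps $I^Y$ and the twisted differential $b_{\tau,\tau'}$ of $\Rep(A)$. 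Once the sign ledger is settled, the verification is a term-by-term comparison.
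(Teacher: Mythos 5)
Your proposal is correct and follows essentially the same route as the paper's own proof: the relations are split according to whether the resulting Hochschild cochain has domain $\s\chains(\basedloop^\Moore X)$ or $(\s\chains(\basedloop^\Moore X))^{\otimes 2}$, the former being handled by the $\A_\infty$-transformation identities for $\underline{\hol}$ together with the chain-map property of $I^Y$, and the latter by the summed factorization $c^*\underline{\hol}_r=\sum_{k+l=r}\mathrm{pr}_1^*\underline{\hol}_k\wedge\mathrm{pr}_2^*\underline{\hol}_l$ (with the $k=0$ and $l=0$ terms giving the $\tau$-twists) fed into Lemma \ref{lemma: factorization} after passing to Moore loops via Lemma \ref{proposition: holonomies and Moore loops}. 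The extra material you supply — the explicit object assignment, strict unitality, and the boundary terms of the factorization — is consistent with and merely amplifies the paper's argument.
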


\begin{proof}
Let $(\phi_r,\dots,\phi_1)$ be a composable chain of morphisms in $\FlatZ(X)$. Checking that $(\tilde{\varphi}_r)_{r\ge 1}$ is an $\A_\infty$-functor amounts to check that
the various ways to apply the maps $\tilde{\varphi}_r$, the differentials and the compositions in the dg categories
are compatible, i.e. produce elements of $\s \HC^\bullet(\chains(\basedloop^\Moore X)),\Hom(E_0\vert_*,E_r\vert_*))$
that satisfy a linear relation.
By construction, all elements of $\s \HC^\bullet(\chains(\basedloop^\Moore X)),\Hom(E_0\vert_*,E_r\vert_*))$
that arise in these considerations
are linear maps with domain either $\s\chains(\basedloop^\Moore X)$
or $(\s \chains(\basedloop^\Moore X))^{\otimes 2}$.

Let us first focus on the component in $\s \Hom(\s \chains(\basedloop^\Moore X),\Hom(E_0\vert_*,E_r\vert_*)$,
i.e. the Hochschild cochains we can produce from $\phi_r,\dots,\phi_1$ which depend in a linear fashion on
$\s\chains(\basedloop^\Moore X)$.
The defining equation in this component reads
\begin{eqnarray*}
\tilde{\varphi}_\bullet(b(\phi_r\otimes \cdots \otimes \phi_1)) &=&\tilde{b}_0\tilde{\varphi}_r(\phi_r\otimes\cdots\otimes \phi_1)\\
&& +\tilde{\cup}(\s\phi_r\vert_* \otimes \tilde{\varphi}_{r-1}( \phi_{r-1}\otimes \cdots \otimes \phi_1)\\
&& + \tilde{\cup}(\tilde{\varphi}_{r-1}(\phi_r\otimes \cdots \otimes \phi_2) \otimes \s \phi_1\vert_*),
\end{eqnarray*}
where $\tilde{b}_0$ and $\tilde{\cup}$ are the operations on the shifted Hochschild cohomology complexes
induced from the terms in the Hochschild differential which do not increase the number of arguments, and the cup product, respectively.
It is straightforward to check that this equation is a consequence of the defining equations for $\underline{\hol}$ to
be an $\A_\infty$-transformation.

Concerning the component in $\s \Hom((\s \chains(\basedloop^\Moore X))^{\otimes 2},\Hom(E_0\vert_*,E_r\vert_*))$
the defining relation reads
$$
\tilde{b}_1\tilde{\varphi}_r(\phi_r\otimes\cdots\otimes \phi_1) + \sum_{k+l=r}\tilde{\cup}(\tilde{\varphi}_k(\phi_r\otimes\cdots\otimes \phi_{l+1}) \otimes \tilde{\varphi}_l(\phi_l\otimes \cdots \otimes \phi_1)) = 0.
$$

Observe that as a consequence of Proposition \ref{proposition: holonomies and Moore loops}, we have the following equality
of differential forms over $\basedloop^\Moore X \times \basedloop^\Moore X$ with values in $\Hom(E_0\vert_*,E_r\vert_*)$:
$$c^*\underline{\hol}_r(\phi_r\otimes\cdots\otimes \phi_1) = \sum_{k+l=r}\mathrm{pr}_1^*\underline{\hol}_{k}(\phi_r\otimes\cdots\otimes\phi_{l+1}) \wedge \mathrm{pr}_2^*\underline{\hol}_l(\phi_l\otimes\cdots\otimes \phi_1).$$
Hence we can apply Lemma \ref{lemma: factorization}
and obtain the remaining relations for $\tilde{\varphi}$ to be an $\A_\infty$-functor.
\end{proof}

\subsection{Reconstructing a flat $\mathbb{Z}$-graded connection} \label{subsection: based loops - reconstructing the connection}

The aim of this subsection is to prove that the $\Ainfty$-functor $\tilde{\varphi}: \FlatZ(X) \to \Rep(\chains(\basedloop^\Moore X))$
is an $\Ainfty$ quasi-equivalence of dg categories, i.e. to establish

\begin{theorem}\label{functor}
Let $X$ be a connected manifold.
 The $\Ainfty$-functor
 $$ \tilde{\varphi}: \FlatZ(X) \to \Rep(\chains(\basedloop^\Moore X))$$
 is a quasi-equivalence, i.e.
 \begin{enumerate}
 \item
 The chain maps given by $\tilde{\varphi}_1$ induce isomorphisms between the cohomologies
 of the Hom-complexes.
 \item The induced functor between the homotopy categories is an equivalence.
 \end{enumerate}
\end{theorem}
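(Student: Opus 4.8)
The plan is to separate the two parts of the statement. For part (1), I would establish that the chain maps $\tilde\varphi_1\colon \Hom_{\FlatZ(X)}(E,E')\to \Hom_{\Rep(\chains(\basedloop^\Moore X))}(\tilde\varphi E,\tilde\varphi E')$ are quasi-isomorphisms. A direct check seems hard, so the strategy is to reduce to a known statement about Hochschild cohomology. After passing to a good cover and using the naturality of all constructions under pull-back, one can reduce to the case of trivial bundles $E=X\times V$, $E'=X\times V'$, where the Hom-complex on the source side is $\Omega^\bullet(X,\Hom(V,V'))$ with a twisted de Rham differential, and the Hom-complex on the target side is $\s\HC^\bullet(\chains(\basedloop^\Moore X),\Hom(V,V'))$ with a twisted Hochschild differential. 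The map $\tilde\varphi_1$ is built from Chen's iterated integral $\sigma$ together with the integration map $I^{\basedloop^\Moore X}_{\Hom(V,V')}$. The untwisted version of this composite is, up to the Felix--Halperin--Thomas comparison of the Hochschild complex of $\chains(\basedloop^\Moore X)$ with $\HH^\bullet(\Omega^\bullet(X))$ (or equivalently with $H^\bullet$ of $\loop X$ via Chen's Theorem~\ref{theorem: Chen2}), a quasi-isomorphism. Then I would run a spectral-sequence or filtration argument: both sides carry filtrations (by iterated-integral length / by the number of Hochschild arguments, matched with the form degree filtration) for which $\tilde\varphi_1$ is a filtered map inducing the Felix--Halperin--Thomas isomorphism on the associated graded; a comparison theorem for the resulting spectral sequences then gives the quasi-isomorphism in the twisted case as well. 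This is where Appendix~\ref{appendix: HPT} and the cited result of \cite{FHT} enter.

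For part (2), I would show that $\mathsf{H}^0\tilde\varphi\colon \mathsf{H}^0\FlatZ(X)\to \mathsf{H}^0\Rep(\chains(\basedloop^\Moore X))$ is essentially surjective; full faithfulness is automatic from part (1). The key reduction is the standard one recalled in the remark after the definition of $\Rep(A)$: every object of $\Rep(\chains(\basedloop^\Moore X))$ is quasi-isomorphic to one of the form $((V,0),\tau)$, a genuine $\Ainfty$-module structure on a complex with zero differential. Restricting $\tau$ along the map of dg algebras $\chains(\basedloop^\Moore X)\to H_0(\basedloop^\Moore X)\cong \mathbb{R}[\pi_1(X,*)]$ from the introduction, one obtains in degree zero an honest representation of $\pi_1(X,*)$ on $H^0(V)$ (and more generally on each cohomology group, after homotopy-transferring), i.e.\ a local system. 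By the classical Riemann--Hilbert correspondence this local system is realized by a flat vector bundle $E_0\to X$. The remaining task is to promote the higher components of $\tau$ to the choice of a flat $\mathbb{Z}$-graded connection on an appropriate graded bundle whose image under $\tilde\varphi$ is quasi-isomorphic to $(V,\tau)$; here one can either invoke the equivalence of points of view $\mathbf{I}\sim\mathbf{S}$ of Block--Smith \cite{BS} together with $\mathbf{S}\sim\mathbf{T}$ of Holstein \cite{Holstein} to know abstractly that such a flat $\mathbb{Z}$-graded connection exists, and then use part (1) of the present theorem to match it with $(V,\tau)$, or else give a direct obstruction-theoretic construction building the connection form $\alpha$ order by order in the iterated-integral filtration, the obstructions vanishing because the relevant cohomology groups on the two sides already agree by part~(1).

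The main obstacle is the first step: proving that $\tilde\varphi_1$ is a quasi-isomorphism in the \emph{twisted} setting, i.e.\ controlling the interaction of the Felix--Halperin--Thomas identification of $\HC^\bullet(\chains(\basedloop^\Moore X))$ with the iterated-integral model, with the twisting by a Maurer--Cartan element (equivalently, a flat $\mathbb{Z}$-graded connection). The cleanest route is to phrase everything in terms of filtered quasi-isomorphisms and apply the comparison theorem for the associated spectral sequences, so that the twisted case follows formally from the untwisted one once one checks that the differentials respect the filtrations and that the filtrations are exhaustive and bounded below on the relevant graded pieces (which they are, since the bundles are finite rank and $X$ is finite-dimensional, so only finitely many iterated-integral lengths contribute in each total degree). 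Once part~(1) is in hand, part~(2) is comparatively soft, resting on Riemann--Hilbert and the standard structure theory of $\Ainfty$-modules.
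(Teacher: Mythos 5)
Your overall architecture coincides with the paper's: part (1) is proved by a filtered comparison whose associated graded reduces to the Felix--Halperin--Thomas quasi-isomorphism (Proposition \ref{proposition: FHT} and Corollary \ref{corollary: FHT}), and part (2) is proved by reducing to objects $((V,0),\tau)$, extracting a $\pi_1(X,*)$-representation from the degree-zero part of $\tau$, realizing it by a flat bundle via Riemann--Hilbert, and then lifting the higher components of $\tau$ order by order through the quasi-isomorphism of part (1). That last step is exactly Proposition \ref{proposition: HPT} of the appendix -- which therefore enters in part (2), not in part (1) as you state.

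Two points in your part (1) would fail or mislead as written. First, the filtration you propose on the target, ``by the number of Hochschild arguments,'' is not the right one: $\tilde{\varphi}_1$ lands entirely in the components of $\HC^\bullet(\chains(\basedloop^\Moore X),\Hom(E\vert_*,F\vert_*))$ with zero or one argument (by the definition of $I^Y$), so that filtration does not match the form-degree filtration on $\Omega^\bullet(X,\Hom(E,F))$, and its associated graded does not reproduce the FHT comparison. The filtration actually used (Definition \ref{definition: auxiliary filtration}) is dual to the \emph{total degree} of the Hochschild chains; with it the associated graded differential is the fibrewise $\partial$, the next page is $\HC^\bullet(\chains(\basedloop^\Moore X),\Hom(H_\partial(E\vert_*),H_\partial(F\vert_*)))$ twisted only by the holonomy of the induced \emph{ordinary} flat connection $[\nabla]$, and Proposition \ref{proposition: FHT} (proved precisely for ordinary flat bundles) applies there. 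Second, the preliminary reduction ``to trivial bundles via a good cover'' is both unnecessary and problematic: the Hom-complex on the representation side is built from chains on $\basedloop^\Moore X$ and has no evident Mayer--Vietoris descent along a cover of $X$; the paper never localizes, because the spectral-sequence argument reduces everything to the global statement for an ordinary flat bundle. Finally, in part (2) note that the order-by-order lift only produces a Maurer--Cartan element whose image is \emph{gauge-equivalent} to $\tau_+$ (one must correct by gauge transformations $e^{\xi_{(k)}}$ at each stage); this is what Proposition \ref{proposition: HPT} delivers, and it suffices for the conclusion.
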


Let us start by proving the first claim of Theorem \ref{functor}
for the case of ordinary flat vector bundles.
It turns out that  this reduces to a result due to
F\'elix, Halperin and Thomas \cite{FHT}.

\begin{proposition}\label{proposition: FHT}
Let $X$ be a connected manifold. 
The restriction of $\tilde{\varphi}_1$ to the trivial coefficient system
$\mathbb{R}$ yields a chain map
$$ \varphi: \Omega^\bullet(X) \to  \HC^\bullet(\chains(\basedloop^\Moore X))$$
which induces an isomorphism in cohomology.
Furthermore, the result extends to the case of coefficients
with values in ordinary flat vector bundles $(E,\nabla)$,
and the module over $\chains(\basedloop^\Moore X)$
determined by the holonomy representation corresponding to $(E,\nabla)$.
Explicitly, the chain map $\varphi$ is given by
$$ \beta \mapsto \varphi_\beta(\s \mu_1 \otimes \cdots \otimes \s \mu_m):= 
\begin{cases}
1 \mapsto \beta \vert_* & \textrm{ if } m=0,\\
\s \mu \mapsto \pm\int_{\Delta_k \times [0,1]} \hat{\mu}^*\beta  & \textrm{ if } m=1,\\
0 & \textrm{ otherwise},
\end{cases}
 $$
 where $\hat{\mu}: \Delta_k\times [0,1] \to X$ is the map adjoint
 to $q\circ \mu: \Delta_k \to \basedloop^\Moore X \to \basedloop X$.
\end{proposition}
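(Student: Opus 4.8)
The plan is to deduce the first assertion from a theorem of F\'elix, Halperin and Thomas \cite{FHT} together with the de Rham theorem, and then to treat the coefficient case as a twisted version of the same argument. Recall first that, by construction of $\Rep(\chains(\basedloop^\Moore X))$, the target complex $\HC^\bullet(\chains(\basedloop^\Moore X),\mathbb{R})=\Hom(\HC_\bullet(\chains(\basedloop^\Moore X)),\mathbb{R})$ is the linear dual of the reduced bar complex of the Pontryagin algebra $\chains(\basedloop^\Moore X)$. The first input is the F\'elix--Halperin--Thomas computation of this bar complex: since $X$ is connected, $\basedloop^\Moore X$ is a group-like diffeological monoid whose classifying space $B\basedloop^\Moore X$ is weakly equivalent to $X$, and \cite{FHT} yields a natural quasi-isomorphism $\Psi\colon\HC_\bullet(\chains(\basedloop^\Moore X))\to\chains(X)$ realising this equivalence on chains (equivalently, $\HH^\bullet(\chains(\basedloop^\Moore X))\cong H^\bullet(X;\mathbb{R})$). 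The second input is the de Rham map $R\colon\Omega^\bullet(X)\to C^\bullet(X;\mathbb{R})$, integration of forms over smooth singular simplices, which is a quasi-isomorphism for every manifold.

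Next I would unwind the definition of $\varphi$. By Theorem \ref{theorem:restriction} and the local description of $\underline{\hol}$ from Theorem \ref{theorem: automorphism - categorical}, for the trivial line bundle with trivial connection $\underline{\hol}_1(\s\beta)$ is Chen's iterated integral $\sigma(1\otimes\beta)$; restricting to the based loop space and applying $I^{\basedloop^\Moore X}$, one finds that $\varphi(\beta)$ is the Hochschild cochain which vanishes on bar-words of length $\ge 2$ and is given otherwise by $1\mapsto\beta\vert_*$ and $\s\mu\mapsto\pm\int_{\Delta_k\times[0,1]}\widehat{\mu}^*\beta$, with $\widehat{\mu}\colon\Delta_k\times[0,1]\to X$ the map adjoint to $q\circ\mu\colon\Delta_k\to\basedloop^\Moore X\to\basedloop X$ --- precisely the formula in the statement. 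I would then recall the explicit components of $\Psi$: its length-$0$ component is the unit $\mathbb{R}\to\chains(X)$, $1\mapsto[*]$, and its length-$1$ component sends a smooth $k$-simplex $\mu$ of $\basedloop^\Moore X$ to a chain representing the prism $\widehat{\mu}$. Pairing these with $R(\beta)$ recovers $\varphi(\beta)$ on bar-words of length $\le 1$; since the higher components of $\Psi$ dualise to Hochschild cochains concentrated in bar-word length $\ge 2$, where $\varphi(\beta)$ vanishes, one concludes that $\varphi$ and $\Psi^\vee\circ R$ differ by an explicit chain homotopy built from those higher components. In particular $\varphi$ induces on cohomology the same isomorphism as $\Psi^\vee\circ R$, which is the first claim for trivial coefficients.

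For an ordinary flat vector bundle $(E,\nabla)$ on the connected manifold $X$ the argument is the twisted analogue. The holonomy of $(E,\nabla)$ is a representation $\rho\colon\pi_1(X,*)\to GL(E\vert_*)$, and through the augmentation-preserving dg algebra map $\chains(\basedloop^\Moore X)\to H_0(\basedloop^\Moore X)\cong\mathbb{R}[\pi_1(X,*)]$ it makes $E\vert_*$ into a $\chains(\basedloop^\Moore X)$-module $M$; by Theorem \ref{theorem: reps of Pontryagin algebra from automorphisms} this $M$ is exactly the module underlying $\tilde\varphi(E,\nabla)$. The F\'elix--Halperin--Thomas equivalence has a version with coefficients in $M$: the bar complex of $\chains(\basedloop^\Moore X)$ with coefficients in $M$ is naturally quasi-isomorphic to $\chains(X;\mathcal{L}_\rho)$, the chains of $X$ with coefficients in the local system $\mathcal{L}_\rho$ determined by $\rho$, since the bar construction with coefficients models the associated local system on the classifying space. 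Dualising and combining this with the de Rham theorem with coefficients, $\Omega^\bullet(X,E)\to C^\bullet(X;\mathcal{L}_\rho)$, and with the same local computation as above identifying the twisted $\tilde\varphi_1$ with the corresponding composite, one obtains the second claim. (Alternatively one can cover $X$ by contractible opens over which $E$ trivialises and reduce the coefficient statement to the trivial one by a Mayer--Vietoris argument, using the naturality of $\tilde\varphi_1$ and the fact that it is a morphism of modules over the trivial-coefficient map $\varphi$.)

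The step I expect to be the main obstacle is the comparison in the second paragraph: one has to match the truncated iterated-integral cochain $\varphi(\beta)$ --- with its sign $(-1)^{(k+1)(k+2)/2}$ and the orientation conventions on the prisms $\Delta_k\times[0,1]$ and their triangulations --- with the linear dual of the F\'elix--Halperin--Thomas bar-construction quasi-isomorphism, and to produce the chain homotopy accounting for the higher bar-word components. Once this identification is in place the rest is formal.
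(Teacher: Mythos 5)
Your overall strategy coincides with the paper's: both reduce the statement to the F\'elix--Halperin--Thomas comparison between $\HC_\bullet(\chains(\basedloop^\Moore X))$ and $\chains(X)$ together with the de Rham theorem, and both try to identify $\varphi$ with the dual of that comparison by matching the bar-length $0$ and $1$ components. The gap sits exactly at the step you yourself flag as ``the main obstacle,'' and the mechanism you propose for closing it does not work as stated. First, \cite{FHT} does not hand you a direct quasi-isomorphism $\Psi\colon \HC_\bullet(\chains(\basedloop^\Moore X))\to\chains(X)$ with prescribed components; it gives a span through the two-sided complex $\HC_\bullet(\chains(\basedloop^\Moore X),\chains(\mathsf{P}_*^\Moore X))$, and one must invert the leg $r$ induced by the augmentation of $\chains(\mathsf{P}_*^\Moore X)$. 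The paper does this explicitly: using a contracting homotopy $h$ of $\chains(\mathsf{P}_*^\Moore X)$ (the Moore path space is contractible) it lifts a cocycle $x$ to $\tilde x=\sum_{k\ge 0}x_{(k)}$ with $x_{(k)}=(-1)^k h([M,h])^{k-1}M(x_{(0)})$, and then pushes forward by $\ev_*$. Second, and more importantly, the assertion that $\varphi$ and $\Psi^\vee\circ R$ ``differ by an explicit chain homotopy built from those higher components'' is a non sequitur: a chain map whose difference is concentrated in bar-length $\ge 2$ need not be null-homotopic, so agreement in lengths $\le 1$ does not by itself give equality of the induced maps on cohomology.

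The paper's resolution is not a chain homotopy at all. It shows that for $k\ge 2$ the chains $\ev_*(x_{(k)})$ pair to zero with \emph{every} differential form on $X$: these chains are built from the double application of the homotopy operator, and the underlying map $\Delta_k\times[0,1]^{2}\to X$, $((t_1,\dots,t_k),s,s')\mapsto \mu(t_1,\dots,t_k)(st)$, has nowhere full rank, so all integrals of forms over it vanish. Hence, after pairing with de Rham classes, only $\ev_*(x_{(0)}+x_{(1)})$ survives, and the identity $\int_{\ev_* h(\mu)}\beta=\pm\int_\mu\sigma(\s\beta)$ identifies the surviving terms with $\varphi$ on cohomology. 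You need this vanishing/rank argument (or a substitute for it) to turn your second paragraph into a proof. Your handling of the flat-bundle case is consistent with the paper, which simply observes that the whole argument carries over verbatim to twisted coefficients.
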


\begin{proof}
We first consider the case of trivial coefficients $\mathbb{R}$.
It is established in \cite{FHT}, Theorem 6.3, that one has a span of quasi-isomorphisms
of dg coalgebras
$$
\xymatrix{
 & \HC_\bullet(\chains(\basedloop^\Moore X), \chains(\mathsf{P}_*^{\Moore} X)) \ar[dl]_{r} \ar[dr]^{\ev_*}& \\
\HC_\bullet(\chains(\basedloop^\Moore X)) & & \chains(X).
}
$$
Here $\mathsf{P}_*^{\Moore}X$ denotes the space of Moore paths
which start at the based point $*$. The Moore loops based at $*$ act
on this space by pre-composition and thus equip
$\chains(\mathsf{P}_*^\Moore X)$ with a dg module structure over
$\chains(\basedloop^\Moore X)$.
The morphism $r$ is induced by the augmentation map
$\chains(\mathsf{P}_*^\Moore X) \to \mathbb{R}$,
while $\ev_*$ pushes chains on $\mathsf{P}^\Moore_*X$ forward to $X$ along
the evaluation map and maps everything else to zero.
Dualizing, one obtains that the dg algebras
$C^\bullet(X)$ and $\HC^\bullet(\chains(\basedloop^\Moore X))$ are quasi-isomorphic,
see Theorem 7.2 of \cite{FHT}.
We claim that the induced map in cohomology
$$ H^\bullet(X) \to \HH^\bullet(\chains(\basedloop^\Moore X))$$
coincides with $H^\bullet(\varphi)$ modulo (inessential) signs.

In order to verify this claim, we have to understand
how to find 
a cocycle $\tilde{x}$ of $HC_\bullet(\chains(\basedloop^\Moore X),\chains(\mathsf{P}_*^\Moore X))$
which maps under $r$ to a given cocycle $x$ of $\HC_\bullet(\chains(\basedloop^\Moore X))$.
One proceeds as follows: define $x_{(0)}$ to be the image
of $x$ under the inclusion $x\mapsto 1\otimes x \subset \HC_\bullet(\chains(\basedloop^\Moore X),\chains(\mathsf{P}_*^\Moore(X))$.
Since the inclusion is not a chain map,
$x_{(0)}$ fails to be a cocycle. The defect for closedness is given by the image of $x_{(0)}$ under the linear map
\begin{eqnarray*}
M: \HC_\bullet(\chains(\basedloop^\Moore X),\chains(\mathsf{P}_*^\Moore(X))
&\to& \HC_\bullet(\chains(\basedloop^\Moore X),\chains(\mathsf{P}_*^\Moore(X)),\\ \xi\otimes \s \mu_1\otimes \cdots\otimes \s \mu_k &\mapsto & (\xi\cdot \mu_1)\otimes \s \mu_2\otimes \cdots \otimes \s \mu_k.
\end{eqnarray*}
Recall that $P_*^\Moore X$ is contractible, hence we obtain
a homotopy operator $h$ on $\chains(\mathsf{P}_*^\Moore(X))$,
which we extend to the Hochschild chain complex.
If one defines
$$ x_{(1)} := -hM(x_{(0)}),$$
the sum $x_{(0)} + x_{(1)}$ is still not closed, but the failure
can be expressed as $-[M,h]M(x_{(0)})$.
Now consider the formal expression
$$ \tilde{x} := x_{(0)} + x_{(1)} + x_{(2)} + \cdots,$$
where for $k\ge 1$ we set $x_{(k)}:=(-1)^k h ([M,h])^{k-1}M(x_{(0)}).$
Since $M$ decreases the tensor degree, $\tilde{x}$ is well-defined.
Moreover, it is closed and maps to $x$ under $r$ by construction.
To finally obtain a singular chain on $X$,
we apply $\ev_*$ to $\tilde{x}$.

We claim that for 
$k\ge 2$, the smooth singular chains $\ev_*(x_{(k)})$ pair to zero with any differential form on $X$ and hence are cohomologically negligible. 
 As a consequence, the map
$$H(\ev_*)\circ H(r)^{-1}: \HH_\bullet(\chains(\basedloop^\Moore X))
\to H_\bullet(X)$$ can be represented by
$x\mapsto \ev_*(x_{(0)}+ x_{(1)})$. Using that for $\mu \in \chains(\mathsf{P}_*^\Moore X)$ and $\beta \in \Omega^\bullet(X)$
the identity
$$ \int_{\ev_*h(\mu)}\beta = \pm\int_{\mu} \sigma(\s \beta) \qquad (*)$$
holds,
one sees that the above map is dual
to $\varphi$ on the level of cohomology.

To prove that for $k\ge 2$ the chains $\ev_*(x_{(k)})$ pair to zero with every differential form, one uses $(*)$ and the fact that all pairings between
singular chains and differential forms on the path space that have the form
$$ \int_{h(\mu)} \sigma(\s \beta)$$
vanish. This is true because the
chain $\ev_* hh(\mu)$ on $X$, which is given by simplicial approximation
of 
$$\Delta_k \times [0,1]^{\times 2} \to X, \quad ((t_1,\dots,t_k),s,s') \mapsto \mu(t_1,\dots,t_k)(st), $$ 
has nowhere full rank. We observe that the whole proof remains valid if one replaces
the trivial coefficient system $\mathbb{R}$ by a flat vector bundle.
\end{proof}

\begin{definition}\label{definition: auxiliary filtration}
Let $E$ be a graded vector bundle over $X$.
The {\em auxiliary grading} on $\Omega^\bullet(X,E)$ and $\HC^{\bullet}(\chains(\basedloop^\Moore X),E\vert_*)$
is given by the following graded subspaces for $k\ge 0$: 
\begin{itemize}
\item $\Omega_{(k)}(X,E):= \Omega^{k}(X,E)$, i.e. the auxiliary grading equals the form-degree.
\item $\HC^{\bullet}_{(k)}(\chains(\basedloop^\Moore X),E\vert_*) = \Hom(\HC_{-k}(\chains(\basedloop^\Moore X)),E\vert_*)$, i.e. the auxiliary grading is dual to the total degree of $\HC_\bullet(\chains(\basedloop^\Moore X))$.\footnote{We remind the reader that we always work with cohomological gradings -- as a consequence the dg algebra
of singular chains is concentrated in non-positive degrees.}
\end{itemize}

The auxiliary gradings determine filtrations
$F_\bullet \Omega^\bullet(X,E)$ and $F_\bullet\HC^\bullet(\chains(\basedloop^\Moore X),E\vert_*)$:
\begin{itemize}
\item $F_r\Omega^\bullet(X,E):= \bigoplus_{k\ge r} \Omega^\bullet_{(k)}(X,E)$,
\item $F_r\HC^\bullet(\chains(\basedloop^\Moore X),E\vert_*) = \bigoplus_{k\ge r} \HC^\bullet_{(k)}(\chains(\basedloop^\Moore X),E\vert_*)$.
\end{itemize}
\end{definition}

\begin{remark}

The graded subspaces $F_r\Omega^\bullet(X,E)$ and $F_r\HC^\bullet(\chains(\basedloop^\Moore X),E\vert_*)$
are dg submodules of $\Omega^\bullet(X,E)$ and $\HC^\bullet(\chains(\basedloop^\Moore X),E\vert_*)$, respectively. The corresponding filtrations 
are descending and complete, i.e. the intersection of all the subspaces is zero.
Moreover they are locally finite in the sense that, if we fix $p\in \mathbb{Z}$, the intersections
of $F_r \Omega^\bullet(X,E)$ and $F_r\HC^\bullet(\chains(\basedloop^\Moore X),E\vert_*)$
with the subspaces of total degree $p$
are non-zero only for a finite number of $r\ge 0$.

\end{remark}

\begin{corollary}\label{corollary: FHT}
The $\Ainfty$-functor $\tilde{\varphi}$ induces isomorphisms between the homomorphism complexes, i.e.
for any graded vector bundles $E$, $F$ over $X$ with flat $\mathbb{Z}$-graded connection, the chain map
$$ \tilde{\varphi}_1: \Omega^\bullet(X,\Hom(E,F)) \to \HC^{\bullet}(\chains(\basedloop^\Moore X),\Hom(E\vert_*,F\vert_*))$$
induces an isomorphism on cohomology.
\end{corollary}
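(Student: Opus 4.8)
The plan is to reduce the statement to Proposition~\ref{proposition: FHT}, which settles the case of ordinary flat vector bundles, by a spectral-sequence comparison based on the auxiliary filtrations of Definition~\ref{definition: auxiliary filtration}. The first ingredient is structural: extracting the form-degree-one component of $D^2=0$ gives $\{D_0,D_1\}=0$, i.e.\ the fibrewise differential $D_0$ (the internal-degree-$(+1)$ part of a flat $\mathbb{Z}$-graded connection) is covariantly constant for the connection induced by the form-degree-one part $D_1$. Hence each $D_0\colon E_k\to E_{k+1}$ has locally constant rank, so the fibrewise cohomology $\mathcal{H}_E:=H_\bullet(E,D_0)$ is again a graded vector bundle, carrying a flat connection $\nabla_E$ induced by $D_1$ (well defined since $D_1^2=-[D_0,D_2]$ is $D_0$-exact); moreover $\mathcal{H}_E\vert_*$ is the cohomology of the complex $(E\vert_*,D_0\vert_*)$. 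Writing $\mathcal{H}:=\Hom(\mathcal{H}_E,\mathcal{H}_F)=H_\bullet(\Hom(E,F),D_0)$, we obtain an ordinary flat vector bundle over $X$.

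Next I would check that $\tilde{\varphi}_1$ is a morphism of the auxiliary-filtered complexes, i.e.\ $\tilde{\varphi}_1(\Omega^p(X,\Hom(E,F)))\subseteq F_p\,\HC^\bullet(\chains(\basedloop^\Moore X),\Hom(E\vert_*,F\vert_*))$. The crucial point is that Chen's iterated integrals only see the form-degree-$\ge 1$ parts of the connection forms: a form-degree-zero factor produces no $dt$-leg and kills the integral. Consequently, for $\phi$ of form-degree $p$, every $\sigma$-term occurring in $\underline{\hol}_1(\phi)$ has form-degree at least $p-1$ on the loop space, so $\varphi_1(\s\phi)=I^{\basedloop^\Moore X}(\underline{\hol}_1(\phi))$ is supported in auxiliary degree $\ge p$; the only contribution in auxiliary degree $0$ is $\phi\vert_*$, which is nonzero only when $p=0$. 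Since both auxiliary filtrations are complete, exhaustive and locally finite in each total degree (the Remark after Definition~\ref{definition: auxiliary filtration}), the associated spectral sequences converge, and by the standard comparison theorem it suffices to show that $\tilde{\varphi}_1$ induces an isomorphism on the $E_2$-pages.

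It then remains to identify these $E_2$-pages. On both sides the $d_0$-differential is the fibrewise operator -- $[D_0,-]$ on $\Omega^\bullet(X,\Hom(E,F))$, and $[\partial,-]$ with $\partial=D_0\vert_*$ on the Hochschild complex -- so by the first paragraph its cohomology is $\Omega^\bullet(X,\mathcal{H})$ respectively $\HC^\bullet(\chains(\basedloop^\Moore X),\mathcal{H}\vert_*)$. The $d_1$-differential is then the covariant de Rham differential of $(\mathcal{H},\nabla)$ respectively the Hochschild differential twisted by the module structure coming from the ordinary holonomy of $(\mathcal{H},\nabla)$ -- here one uses that the auxiliary-degree-one component of $\widehat{\hol}_*$ is precisely the ordinary parallel transport. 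Hence $E_2^{dR}=H^\bullet_{dR}(X,(\mathcal{H},\nabla))$ and $E_2^{\mathrm{Hoch}}=\HH^\bullet(\chains(\basedloop^\Moore X),\mathcal{H}\vert_*)$, and a comparison of the explicit formulas identifies the map induced by $\tilde{\varphi}_1$ on $E_2$ with the chain map $\varphi$ of Proposition~\ref{proposition: FHT} attached to the flat vector bundle $\mathcal{H}$; that proposition -- valid, as noted there, with coefficients in any flat vector bundle -- yields the required isomorphism on $E_2$, and hence the corollary. The main obstacle is keeping the bookkeeping under control: verifying carefully that $\tilde{\varphi}_1$ is genuinely filtered, pinning down the $E_2$-pages with all Koszul signs, and matching the resulting map with the one of Proposition~\ref{proposition: FHT}; the conceptual heart, by contrast, is the observation that the generalized holonomies ignore the fibrewise differential, which is exactly what makes the filtration argument run.
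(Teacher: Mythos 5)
Your proposal is correct and follows essentially the same route as the paper: filter both Hom-complexes by the auxiliary grading of Definition \ref{definition: auxiliary filtration}, check that $\tilde{\varphi}_1$ is filtered, and use convergence of the associated spectral sequences to reduce to the comparison of second pages, which is exactly Proposition \ref{proposition: FHT} applied to the ordinary flat bundle $\Hom(H_\partial(E),H_\partial(F))$. Your additional observations -- that $D_0$ is covariantly constant and hence of locally constant rank (so the fibrewise cohomology is again a vector bundle), and the explicit verification that iterated integrals shift auxiliary degree correctly -- are details the paper leaves implicit, but the argument is the same.
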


\begin{proof}
Let $F_\bullet \Omega^\bullet(X,\Hom(E,F))$ and $F_\bullet \HC^\bullet(\chains(\basedloop^\Moore X),\Hom(E\vert_*,F\vert_*))$
be the filtrations introduced in Definition \ref{definition: auxiliary filtration}.
The map $\tilde{\varphi}_1$ is compatible with the filtrations. Since the associated
spectral sequences $\mathcal{E}$ and $\mathcal{F}$ converge to the cohomologies of
$\Omega^\bullet(X,\Hom(E,F))$ and $\HC^\bullet(\chains(\basedloop^\Moore X),\Hom(E\vert_*,F\vert_*))$,
it suffices to check that $\tilde{\varphi}_1$ induces an isomorphism between the second pages of $\mathcal{E}$
and $\mathcal{F}$, respectively.

The first sheets are given by
$$\mathcal{E}_1=\Omega^\bullet(X,\Hom(H_\partial(E),H_\partial(F))) \quad \textrm{and} \quad\mathcal{F}_1=\HC^\bullet(\chains(\basedloop^\Moore X),\Hom(H_\partial(E\vert_*),H_\partial(F_*))),$$
where $H_\partial(E)$ and $H_\partial(F)$ refer to cohomology with respect to the fibrewise coboundary
operators $\partial$ on $E$ and $F$.
The induced coboundary operators on the second sheets are given by the covariant derivative with respect to the induced flat connection
$[\nabla]$ on the graded vector bundle $\Hom(H_\partial(E),H_\partial(F))$ and the Hochschild coboundary operator $b$, twisted by the holonomies
with respect to $[\nabla]$.
Moreover, the induced chain map coincides with $\varphi$ from Proposition \ref{proposition: FHT}
and hence induces an isomorphism between the second sheets $\mathcal{E}_2$ and $\mathcal{F}_2$, respectively.
\end{proof}

\begin{proposition}\label{prop: quasi-surjectivity}
The $\Ainfty$-functor $\tilde{\varphi}: \FlatZ(X) \to \Rep(\chains(\basedloop^\Moore X))$ is quasi-surjective,
i.e
for every object $\tau$ of $\Rep(\chains(\basedloop^\Moore X))$
there is a flat graded vector bundle $E$ such that
$\widehat{\hol}_*(E)$ is quasi-isomorphic to $\tau$.
\end{proposition}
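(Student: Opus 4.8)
The plan is to deduce quasi-surjectivity from a Maurer--Cartan transfer along the filtered quasi-isomorphism of Corollary \ref{corollary: FHT}.

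\emph{Normalization.} First I would replace $\tau$ by a quasi-isomorphic object whose underlying complex $(V,\partial)$ satisfies $\partial=0$; this is possible by homotopy transfer (see the Remark following the definition of $\Rep(A)$, and \cite{Keller}), and since quasi-isomorphic objects have quasi-isomorphic images it suffices to prove the statement for such $\tau$. Thus $\tau$ is a Maurer--Cartan element of $(\HC^\bullet(\chains(\basedloop^\Moore X),\End V),b,\cup)$ lying in $\MC_+$; in particular it lies in the first step $F_1$ of the auxiliary filtration of Definition \ref{definition: auxiliary filtration}.

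\emph{Choice of a base object.} Next I would fix a flat graded vector bundle $E_0$ with $E_0\vert_*\cong V$ to serve as the leading term of the eventual solution. One natural choice is to extract, from the component of $\tau$ on $0$-chains --- which by the Maurer--Cartan equation and $\partial=0$ descends to an algebra homomorphism $H_0(\basedloop^\Moore X)=\mathbb{R}[\pi_1(X,*)]\to\End V$, i.e. a graded representation of $\pi_1(X,*)$ --- a flat vector bundle via the Riemann--Hilbert correspondence; alternatively one may simply take $E_0=X\times V$ with the trivial flat connection $\nabla_0$. In either case set $\tau_0:=\widehat{\hol}_*(E_0)$, which by Theorem \ref{theorem: reps of Pontryagin algebra from automorphisms} is again a Maurer--Cartan element of the same complex, and put $y:=\tau-\tau_0$. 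Since both $\tau$ and $\tau_0$ lie in $\MC_+\subseteq F_1$, the element $y$ lies in $F_1$; equivalently, $y$ is a Maurer--Cartan element of the twisted endomorphism algebra $\End_{\Rep(\chains(\basedloop^\Moore X))}(\tau_0)=(\HC^\bullet(\chains(\basedloop^\Moore X),\End V),b_{\tau_0,\tau_0},\cup)$, representing a ``small'' deformation of $\tau_0$.

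\emph{Transfer.} Applying the $\Ainfty$-functor $\tilde\varphi$ to the single object $E_0$ induces an $\Ainfty$-algebra morphism
\[
\Psi\colon\ \End_{\FlatZ(X)}(E_0)=\bigl(\Omega^\bullet(X,\End E_0),d_{\nabla_0},\wedge\bigr)\ \longrightarrow\ \End_{\Rep(\chains(\basedloop^\Moore X))}(\tau_0)
\]
whose linear part is $\tilde\varphi_1$, which is compatible with the auxiliary filtrations and is a quasi-isomorphism by Corollary \ref{corollary: FHT}. As these filtrations are descending, complete and locally finite (the Remark after Definition \ref{definition: auxiliary filtration}), the homological perturbation result of Appendix \ref{appendix: HPT} produces a filtered $\Ainfty$-quasi-inverse $\Phi$ of $\Psi$. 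I would then set $\alpha:=\sum_{r\ge1}\Phi_r(y^{\otimes r})$: the sum converges by completeness and local finiteness, $\alpha$ is a Maurer--Cartan element of $\End_{\FlatZ(X)}(E_0)$, and hence $D:=\nabla_0+[\alpha,-]$ is a flat $\mathbb{Z}$-graded connection on the graded bundle $E_0$. Because $\tilde\varphi$ is an $\Ainfty$-functor, the object $\widehat{\hol}_*(E_0,D)=\tilde\varphi(E_0,D)$ is obtained from $\tau_0$ by twisting with the Maurer--Cartan element $\Psi(\alpha)=\Psi(\Phi(y))$; since $\Psi\circ\Phi$ is $\Ainfty$-homotopic to the identity, $\Psi(\Phi(y))$ is gauge-equivalent to $y$, so $\widehat{\hol}_*(E_0,D)$ is isomorphic --- in particular quasi-isomorphic --- to the object obtained from $\tau_0$ by twisting with $y$, which is exactly $\tau$. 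Taking $E:=(E_0,D)$ finishes the argument.

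\emph{Main obstacle.} The hard part will be the transfer step: the DGLAs involved are not nilpotent, so one must use the versions of ``a filtered quasi-isomorphism admits a filtered $\Ainfty$-quasi-inverse'' and ``$\Ainfty$-homotopic morphisms send a Maurer--Cartan element to gauge-equivalent ones'' that hold for DGLAs complete with respect to a locally finite descending filtration --- this is precisely the technical input deferred to Appendix \ref{appendix: HPT}. One also has to verify --- routine but sign-laden --- that an $\Ainfty$-functor sends an object twisted by a Maurer--Cartan element to the image object twisted by the image Maurer--Cartan element.
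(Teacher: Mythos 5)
Your outline follows the paper's strategy in its first branch (normalize so that $\partial=0$, extract the holonomy representation from the degree-one data, build a flat bundle by Riemann--Hilbert, then transfer the higher Maurer--Cartan components along the filtered quasi-isomorphism of Corollary \ref{corollary: FHT} using Appendix \ref{appendix: HPT}). However, there are two genuine problems. First, the ``alternative'' choice $E_0=X\times V$ with the trivial flat connection does not work. With that choice the difference $y=\tau-\tau_0$ has a nonzero component in auxiliary degree one --- precisely the component encoding the (possibly nontrivial) representation $[\tau_{(1)}]:\mathbb{R}[\pi_1(X,*)]\to\End V$ --- so $y$ lies in $F_1$ but not in $F_2$. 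The transfer result of the appendix, Proposition \ref{proposition: HPT}, is stated and proved only for $\MC(F_2(-))$, and this restriction is essential: for an element of $F_1$ the lowest component $\tau_{(1)}$ satisfies the nonlinear equation $b\tau_{(1)}+\tau_{(1)}\cup\tau_{(1)}=0$ rather than being a cocycle, so the inductive lifting argument has no base case, and indeed the degree-one part cannot in general be produced by an $F_2$-deformation (a nontrivial rank-one local system on $S^1$ is not gauge-equivalent to the trivial one). The Riemann--Hilbert step that matches the holonomy representation is therefore not optional; it is exactly what reduces the remaining deformation to an element of $F_2$, where the appendix applies.

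Second, your transfer step invokes a ``filtered $\Ainfty$-quasi-inverse $\Phi$ of $\Psi$'' together with the statement that $\Ainfty$-homotopic morphisms send Maurer--Cartan elements to gauge-equivalent ones. Neither is provided by Appendix \ref{appendix: HPT}, and in the complete filtered (non-nilpotent) setting both would require separate proof. What the appendix actually proves, by a direct obstruction-theoretic induction, is that the push-forward $\psi_*:\MC(F_2(A))\to\MC(F_2(B))$ of a grading-compatible $\Ainfty$ quasi-isomorphism is surjective on gauge-equivalence classes --- which is exactly the conclusion you need, applied to $\psi=\tilde\varphi$ restricted to the endomorphism algebra of the flat bundle constructed in the first step and to the element $\tau_+=\sum_{k\ge 2}\tau_{(k)}$ of the suitably twisted Hochschild complex. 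You should cite that statement directly rather than route the argument through an unconstructed quasi-inverse; as written, the key step of your proof rests on tools the paper does not supply.
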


\begin{proof}
Recall from \S \ref{subsection: reps of dg algebras} that every object
in $\Rep(\chains(\basedloop^\Moore X))$ is quasi-isomorphic to one where the underlying complex is of the form
$(V,0)$, i.e. has trivial coboundary operator. Let $\tau$ be such an object. By definition, $\tau$ is a Maurer-Cartan element of the dg algebra
$\HC^\bullet(\chains(\basedloop^\Moore X),\End V)$.
We decompose $\tau$ with respect to the auxiliary degree we introduced in Definition \ref{definition: auxiliary filtration}, i.e.
$$ \tau = \sum_{k \ge 0}\tau_{(k)}, \qquad \tau_{(k)}\in \Hom(\HC_{-k}(\chains(\basedloop^\Moore X)),\End V).$$
Since $\chains(\basedloop^\Moore X)$ is concentrated in non-positive degrees, the only component of
$\HC_k(\chains(\basedloop^\Moore X))$ in degree $0$ is $\mathbb{R}$. Since the restriction of
$\tau$ to $\mathbb{R}$ is required to vanish, we conclude that $\tau_{(0)}$ vanishes as well.
We next consider the component $\tau_{(1)}$. By definition, it is given by the restriction of $\tau_{(1)}$
to elements of $\HC_\bullet(\chains(\basedloop^\Moore X))$ of degree $-1$. This are exactly the chains
on $\basedloop^\Moore X$ of degree $0$, i.e. linear combinations of points in $\basedloop^\Moore X$.
The map
$$ \tau_{(1)}: C_0(\basedloop^\Moore X) \to \End V$$
is part of the linear component $\tau_1$ of an $\Ainfty$-morphism from
$\chains(\basedloop^\Moore X)$ to $\End V$.
Hence it induces a morphism of graded algebras on the level of homology. Since the coboundary operator on $V$
is trivial we obtain a morphism of graded algebras
$$ [\tau_{(1)}]: H_0(\basedloop^\Moore X) \to \End V.$$
We note that $H_0(\basedloop^\Moore X)$ is the group-ring of the fundamental group $\pi_1(X,*)$ of $X$.
By the universal property of the group-ring, $[\tau_{(1)}]$ corresponds
to a morphism of groups
$$ \rho_\tau: \pi_1(X,*) \to GL(V),$$
i.e. a representation of the fundamental group of $X$.
By the adjoint bundle construction, there is a vector bundle $E$, equipped with a flat connection $\nabla$,
such that the holonomy representation of $(E,\nabla)$ coincides with $\rho_\tau$. The $\mathbb{Z}$-grading on $V$
induces a $\mathbb{Z}$-grading on $E$, which is preserved by $\nabla$.
We now consider the restriction of the $\Ainfty$-functor $\tilde{\varphi}$
to the object $(E,\nabla)$. This gives us an $\Ainfty$-morphism
$$\tilde{\varphi}: \Omega^\bullet(X,\End E) \to \HC^\bullet(\chains(\basedloop^\Moore X),\End V).$$
The left-hand side is equipped with the covariant derivative corresponding to $\nabla$,
while the right-hand side is equipped with the Hochschild boundary operator twisted
by $\rho_\tau$.
We observe that
$$ \tau_+ := \sum_{k\ge 2} \tau_{(k)}$$
is a Maurer-Cartan element of the right-hand side.
By Proposition \ref{proposition: HPT} from Appendix \ref{appendix: HPT} we can find
a Maurer-Cartan element $\alpha = \sum_{k\ge 2} \alpha_{(k)}$ of
$\Omega^\bullet(X,\End E)$ such that
$\tilde{\varphi}_*(\omega)$ is gauge-equivalent to $\tau_+$. Finally, we observe that this is tantamount to saying that the $\Ainfty$-functor
$\tilde{\varphi}$ sends $E$, equipped with the the flat $\mathbb{Z}$-graded connection
corresponding to $(\nabla,\alpha_2,\alpha_3,\dots)$, to an object of
$\Rep(\chains(\basedloop^\Moore X))$ which is isomorphic to $\tau$.
This completes the proof.
\end{proof}

As a consequence we have the following result:

\begin{corollary}
Let $X$ be a connected manifold.
The dg categories $\FlatZ(X)$ and $\Rep(\chains(\basedloop^\Moore X))$
are quasi-equivalent.
\end{corollary}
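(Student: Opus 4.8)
The plan is to obtain the Corollary as an immediate consequence of Theorem \ref{functor}, whose two assertions have, by this point, essentially been established; the only remaining work is to assemble them and to note that a single quasi-equivalence suffices. First I would recall that Corollary \ref{corollary: FHT} is exactly the first assertion of Theorem \ref{functor}: for every pair of objects $E,F$ of $\FlatZ(X)$ the chain map $\tilde{\varphi}_1$ induces an isomorphism $H^\bullet(\Hom_{\FlatZ(X)}(E,F)) \cong H^\bullet(\Hom_{\Rep(\chains(\basedloop^\Moore X))}(\tilde{\varphi}(E),\tilde{\varphi}(F)))$. Specialising to degree zero, this says that the induced functor $\mathsf{H}^0\tilde{\varphi}\colon \mathsf{H}^0\FlatZ(X) \to \mathsf{H}^0\Rep(\chains(\basedloop^\Moore X))$ is fully faithful.

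For the second assertion of Theorem \ref{functor} it then remains to verify that $\mathsf{H}^0\tilde{\varphi}$ is essentially surjective, and this is precisely Proposition \ref{prop: quasi-surjectivity}: every object $\tau$ of $\Rep(\chains(\basedloop^\Moore X))$ is isomorphic in the homotopy category to $\widehat{\hol}_*(E) = \tilde{\varphi}(E)$ for a suitable flat $\mathbb{Z}$-graded vector bundle $E$ over $X$. A functor that is fully faithful and essentially surjective is an equivalence of categories, so $\mathsf{H}^0\tilde{\varphi}$ is an equivalence; combined with the previous paragraph this establishes both bullets of Theorem \ref{functor}, i.e. $\tilde{\varphi}$ is a quasi-equivalence of dg categories.

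Finally, the Corollary follows by unwinding the terminology: quasi-equivalence of dg categories is the equivalence relation generated by the existence of a quasi-equivalence between them, so exhibiting the single quasi-equivalence $\tilde{\varphi}\colon \FlatZ(X) \to \Rep(\chains(\basedloop^\Moore X))$ is enough, and there is nothing further to check. The genuine obstacles are not in this assembly but in the two inputs: Corollary \ref{corollary: FHT}, whose proof rests on the F\'elix--Halperin--Thomas description of the Hochschild complex of the Pontryagin algebra (Proposition \ref{proposition: FHT}) together with a comparison of the spectral sequences attached to the auxiliary filtration, and Proposition \ref{prop: quasi-surjectivity}, which uses the classical Riemann-Hilbert correspondence to reconstruct the underlying flat vector bundle from the holonomy representation and the homotopy-transfer argument of Appendix \ref{appendix: HPT} to reconstruct the higher components of the flat $\mathbb{Z}$-graded connection. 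Granting these, the Corollary is purely formal.
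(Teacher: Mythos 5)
Your proposal is correct and matches the paper's intended argument: the corollary is stated there as an immediate consequence of Theorem \ref{functor}, which is exactly the assembly of Corollary \ref{corollary: FHT} (isomorphisms on the cohomology of Hom-complexes, hence full faithfulness of $\mathsf{H}^0\tilde{\varphi}$) with Proposition \ref{prop: quasi-surjectivity} (essential surjectivity on the homotopy category) that you describe. Nothing further is needed.
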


\begin{remark}
One can obtain this corollary in a less direct way by combining
\begin{itemize}
\item The quasi-equivalence between $\FlatZ(X)$
and $\Rep^\infty(\pi_\infty(X))$, established by Block-Smith  \cite{BS}.
\item The quasi-equivalence between $\Rep^\infty(\pi_\infty(X))$
and $\Rep^\infty(\chains(\basedloop^\Moore X))$, established by Holstein  \cite{Holstein}.
\end{itemize}
\end{remark}

\appendix

\section*{Appendix}

\section{The Maurer-Cartan equation in a filtered dg algebra}\label{appendix: HPT}

Here we establish a technical result regarding Maurer Cartan elements
of dg algebras that are equipped with an auxiliary grading. 
This result is used in the proof of Proposition \ref{prop: quasi-surjectivity}.

%Throughout this appendix dg algebras are allowed to 
%have the trivial product (in particular, they can be nonunital).

\begin{definition}
A filtration on a dg algebra $A$ is a decreasing sequence of dg ideals
\[ A=F_0(A) \supseteq F_1(A) \supseteq F_2(A)\supseteq \cdots,\]
such that:
\[ \bigcap_{k\in \mathbb{N}} F_k(A)=0,\]
and
\[ F_i(A) \cdot F_j(A) \subseteq F_{i+j}(A).\]
The filtration is said to be complete if the natural inclusion
\[ A \hookrightarrow \hat{A}:=\varinjlim A / F_k(A)\]
is an isomorphism.
\end{definition}

\begin{definition}
Let $A,B$ be filtered algebras. We say that an $\A_\infty$-morphism $\psi: A \rightarrow B$
is filtration preserving if
\[ \psi_k(a_1,\cdots a_k)\in F_{l_1+ \cdots +l_k-1}(B) \textit{ whenever } a_i \in F_{l_i}(A).\]
We say that $\psi: A \rightarrow B$ is strongly filtration preserving if
\[ \psi_k(a_1,\cdots a_k)\in F_{l_1+ \cdots +l_k+k-1}(B) \textit{ whenever } a_i \in F_{l_i}(A).\]\end{definition}

\begin{remark}
If $A$ is a filtered algebra then each of the algebras $F_k(A)$ inherits a filtration given by
\[ F_i(F_k(A)):=F_{i+k}(A).\]
If $\psi: A \rightarrow B $ is a filtration preserving $\A_\infty$-morphism then for each $k\geq 1$, $\psi$ restricts to an $\A_\infty$-morphism:
\[ \psi: F_k(A) \rightarrow F_k(B),\]
which is strongly filtration preserving.
\end{remark}

\begin{remark}
A filtration on a dg algebra $A$ induces a Hausdorff topology on the underlying vector space of $\hat{A}$ where the basis of neighbourhoods for zero is $\{F_k(A)\}_{k \in \mathbb{N}}$. Let $A,B$ be dg algebras with filtration. If $\psi:A \rightarrow B$ is a strongly filtration preserving $\A_\infty$-morphism then the push-forward map
\[ \psi_*: \MC(\hat{A})\rightarrow \MC(\hat{B}), \quad  \psi_*(\alpha):= \sum_{k\geq 1}\psi_k(\alpha^{\otimes k}) \]
is well defined because the infinite sequence on the right is convergent.
\end{remark}\begin{definition}
Let $A$ be a dg algebra.
An auxiliary grading on $A$ is a decomposition
$$ A = \bigoplus_{k\ge 0} A_{(k)},$$
with $\{A_{(k)}\}_{k\ge 0}$ a collection of graded subspaces of $A$ such that:
\begin{itemize}
\item For fixed degree $r \in \mathbb{Z}$ we have
$A^r \cap A_{(k)}= 0 \textit{ for } k\gg0$.
\item The collection $\{A_{(k)}\}_{k\ge 0}$ is compatible with the differential and the multiplication
in the sense that
$$ dA_{(k)}\subseteq A_{(k+1)} \qquad \textrm{and} \qquad A_{(k)}\cdot A_{(l)} \subset A_{(k+l)}$$
hold.
\end{itemize}

\end{definition}
\begin{remark}

Let $A$ be a dg algebra with an auxiliary grading, then there is an induced filtration given by:
\[ F_k(A):= \bigoplus_{l\geq k} A_{(l)}.\]
Recall that $U(A)$ denotes the group of invertibles 
of degree $0$ in $A$.
Since the filtration is locally finite, i.e. in each degree $r$ only a finite number of intersections $A^r \cap F_kA$ are non-trivial,
the filtration is complete.

Denote by $\G(A)$ the subset of $U(A)$ given by elements of the form
$$ e^\xi = 1 + \xi + \xi \cdot \xi + \xi \cdot \xi \cdot \xi +...$$
for some $\xi \in F_1(A)^0$.
The gauge-action of $U(A)$ restricts to an action on the spaces $\MC(F_2(A))$. 
 The Maurer-Cartan equation for $\tau = \sum_{k\ge 2} \tau_{(k)} \in \MC(F_2(A))$ is equivalent to
$$ d \tau_{(k)} = -\sum_{r+s = k+1} \tau_{(r)} \cdot \tau_{(s)}$$
for all $k \ge 2$.
An element $e^\xi$ of $\G(A)$ acts on $\tau$ by
$$  \tau \bullet e^\xi = e^{-\xi}\tau e^{\xi} + d \xi.$$
In terms of the decomposition $\tau = \sum_{k \ge 2} \tau_{(k)}$
we have
$$ ( \tau \bullet e^\xi)_{(k)} = d(\xi_{(k-1)})+\sum_{i_1+\cdots +i_r+j+l_1+\cdots+l_s=k} (-1)^{t} \xi_{(i_1)}\cdots \xi_{(i_r)} \tau_{(j)} \xi_{(l_1)}\cdots \xi_{(l_s)}.$$
\end{remark}

\begin{definition}
Let $A$ and $B$ be two dg algebras with auxiliary gradings.
An $\Ainfty$-morphism $\psi: A\to B$ is said to be compatible with the auxiliary gradings if $\psi_r$
 maps $ A_{(k_1)}\otimes \cdots \otimes A_{(k_r)}$ to $B_{(k_1 + \cdots k_r - r +1)}.$
\end{definition}

\begin{remark}
Let $\psi:A \to B$ be  an $\Ainfty$-morphism compatible with auxiliary gradings on $A$ and $B$, respectively. Then $\psi$ restricts to a map:
\[ \psi: F_2(A)\rightarrow F_2(B),\]
which is strongly filtration preserving and therefore there is a well defined push forward map
$$ \psi_*: \MC(F_2(A)) \to \MC(F_2(B)).$$
\end{remark}

\begin{proposition}\label{proposition: HPT}
Let $A$ and $B$ be two dg algebras, equipped with auxiliary gradings. Suppose that
$\psi: A\to B$
is an $\Ainfty$ quasi-isomorphism which is compatible with the auxiliary gradings.
Then the push-forward map
$$ \psi_*: \MC(F_2(A)) \to \MC(F_2(B))$$
is surjective on gauge-equivalence classes of $\MC(F_2(B))$.

\end{proposition}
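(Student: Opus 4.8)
The plan is to prove the statement by obstruction theory with respect to the auxiliary grading: given $\tau \in \MC(F_2(B))$, I will construct \emph{simultaneously} a Maurer--Cartan element $\alpha = \sum_{k\ge 2}\alpha_{(k)} \in \MC(F_2(A))$ and a gauge transformation $g = e^{\xi}$, with $\xi = \sum_{k\ge 1}\xi_{(k)} \in F_1(B)^0$, such that $\psi_*(\alpha)\bullet g = \tau$; this clearly implies surjectivity on gauge-equivalence classes. The homogeneous pieces $\alpha_{(k)}$ and $\xi_{(k)}$ will be produced one auxiliary degree at a time. Since the filtrations $F_\bullet$ are locally finite in each total degree (as observed after Definition~\ref{definition: auxiliary filtration}), in each fixed total degree only finitely many pieces are nonzero, so no convergence issue arises and the inductive procedure simply terminates.

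The inductive hypothesis is: for $n \ge 2$ there are homogeneous components $\alpha_{(2)},\dots,\alpha_{(n)}$ (total degree $1$, auxiliary degrees $2,\dots,n$) and $\xi_{(1)},\dots,\xi_{(n-1)}$ (total degree $0$, auxiliary degrees $1,\dots,n-1$) such that, writing $\alpha^{(n)} := \sum_{k=2}^{n}\alpha_{(k)}$ and $\xi^{(n-1)} := \sum_{k=1}^{n-1}\xi_{(k)}$, one has $d\alpha^{(n)} + (\alpha^{(n)})^2 \in F_{n+2}(A)$ and $\psi_*(\alpha^{(n)})\bullet e^{\xi^{(n-1)}} \equiv \tau \pmod{F_{n+1}(B)}$. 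Both the base case (relevant new auxiliary degree $N=2$) and the inductive step from $n$ to $n+1$ (new degree $N=n+1$) reduce to the following two problems in auxiliary degree $N$. First, the auxiliary-degree-$(N+1)$ component of the Maurer--Cartan equation forces $d\alpha_{(N)} = -o^A$, where $o^A := \sum_{r+s = N+1,\, r,s\ge 2}\alpha_{(r)}\alpha_{(s)} \in A^2_{(N+1)}$ is already determined by the lower pieces; a Bianchi-type computation using the lower Maurer--Cartan equations shows $o^A$ is a $d$-cocycle. Second, once $\alpha_{(N)}$ is chosen, the auxiliary-degree-$N$ component of the matching condition reads $\psi_1(\alpha_{(N)}) + d\xi_{(N-1)} = \delta$, where $\delta \in B^1_{(N)}$ is the auxiliary-degree-$N$ discrepancy between $\tau$ and the previous approximation. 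Here it is essential that $\psi$ is compatible with the auxiliary gradings: the maps $\psi_r$ with $r\ge 2$ contribute to auxiliary degree $N$ only through pieces of $\alpha$ of strictly lower auxiliary degree, so the new unknown $\alpha_{(N)}$ enters the degree-$N$ equation only via $\psi_1$, and likewise $\xi_{(N-1)}$ enters only via $d\xi_{(N-1)}$.

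The cohomological heart of the argument, and the only place where the quasi-isomorphism hypothesis is used, is the vanishing of the two obstruction classes. Using that $\psi$ intertwines the Maurer--Cartan operators up to terms of strictly higher auxiliary degree, that the gauge action preserves the Maurer--Cartan property, and that $\tau$ is Maurer--Cartan, one computes the identity $\psi_1(o^A) = -d\delta$; in particular $\psi_1(o^A)$ is $d$-exact, and $\delta$ is a $d$-cocycle. Since $\psi_1$ is \emph{injective} on cohomology, $[o^A] = 0$ in $H^2(A)$, so $d\alpha_{(N)} = -o^A$ is solvable; because $d$ raises the auxiliary grading by exactly one, a solution $\alpha_{(N)}$ of pure auxiliary degree $N$ exists. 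Fixing one such $\alpha_{(N)}$, the leftover discrepancy $\delta' := \delta - \psi_1(\alpha_{(N)})$ is again a $d$-cocycle in $B^1_{(N)}$, since $d\delta' = d\delta - \psi_1(d\alpha_{(N)}) = d\delta + \psi_1(o^A) = 0$. Since $\psi_1$ is \emph{surjective} on cohomology, $[\delta']$ lies in the image of $H^1(A)\to H^1(B)$, which yields a $d$-cocycle $z \in A^1_{(N)}$ and an element $\xi_{(N-1)} \in B^0_{(N-1)}$ with $\psi_1(z) + d\xi_{(N-1)} = \delta'$. Replacing $\alpha_{(N)}$ by $\alpha_{(N)} + z$ (which preserves $d\alpha_{(N)} = -o^A$ because $z$ is closed) and using this $\xi_{(N-1)}$ completes the inductive step. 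Passing to the limit produces $\alpha \in \MC(F_2(A))$ and $g = e^{\xi} \in \G(B)$ with $\psi_*(\alpha)\bullet g = \tau$.

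I expect the main obstacle to be the bookkeeping behind these cohomological claims rather than any single conceptual point: one must keep precise track of how $d$, the product, and each structure map $\psi_r$ shift the auxiliary grading, verify the Bianchi identity for $o^A$ and the identity $\psi_1(o^A) = -d\delta$ at the exact auxiliary degree where they are needed, and carefully separate the prescribed differential of $\alpha_{(N)}$ from the free closed correction $z$ and the gauge correction $\xi_{(N-1)}$. Everything else is a routine unfolding of the definitions of the gauge action and of the push-forward $\psi_*$ recalled in the remarks preceding the statement.
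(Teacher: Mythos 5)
Your proposal is correct and follows essentially the same inductive obstruction-theoretic argument as the paper: the quadratic obstruction $o^A$ (the paper's $R_{(k+2)}$) is killed using injectivity of $\psi_1$ on cohomology via the identity relating $\psi_1(o^A)$ to the differential of the discrepancy, and the closed correction $z$ plus gauge term $\xi$ (the paper's $\lambda_{(k+1)}$ and $\xi_{(k)}$) are produced using surjectivity. The only cosmetic difference is that you apply the accumulated gauge transformation to $\psi_*(\alpha)$ while the paper applies it to $\tau$ and collects the infinite product at the end.
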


\begin{proof}
Let $\tau$ be a Maurer-Cartan element of $F_2(B)$.
We will work inductively with respect to the auxiliary grading.
It suffices to prove the following statement for any $k\geq 1$.
Given
$$ \mu[k] := \sum_{2\le l \le k} \mu_{(l)} \in A^1, \quad \textrm{with} \quad \mu_{(l)}\in A^1_{(l)}$$ such that:
\begin{itemize}
 \item[(i)] For all $2\le l \le k$ the equation
 $$ d\mu_{(l)} + \sum_{r+s=l+1} \mu_{(r)}\cdot \mu_{(s)}=0$$
 holds.
 \item[(ii)] 
we have $ \psi_*(\mu[k]) \equiv \tau \,\, \mathrm{mod}(F_{k+1}(B))$.
\end{itemize}
Then there exists $\mu_{(k+1)} \in A^1_{(k+1)}$ and $\xi_{(k)}\in B^0_{(k)}$  such that 
\[\mu[k+1]:=\mu[k]+\mu_{(k+1)},\] 
satisfies the Maurer-Cartan equation modulo $F_{k+3}(A)$ and
\[ \psi_*(\mu[k+1])= \tau \bullet e^{\xi_{(k)}} \,\, \mathrm{mod}  (F_{k+2}(B)).\]
Note that the claim above is indeed enough to prove the proposition, because the infinite product
\[ e^{\xi_{(1)}} e^{\xi_{(2)}} e^{\xi_{(3)}}\cdots =:g\] 
and the infinite sum
\[ \sum_{k\geq 2} \mu_{(k)}=: \mu\]
are convergent and satisfy
$\psi_*(\mu)= \tau \ast g$.

We will now prove the claim above. 
Consider the quantity:
\[ R_{(k+2)}:= \sum_{r+s=k+2}\mu_{(r)}\cdot \mu_{(s)}.\]
The induction hypothesis (i) implies that $R_{(k+2)}$ is closed.
 We claim that the cohomology class of $R_{(k+2)}$ is zero. In order to prove this, define $X_{(k+2)}$ to be the component in $B_{(k+2)}$ of the sum:
$$ \sum_{p,q}\psi_{p+q+1}(( \mu[k])^{\otimes p} \otimes  (d\mu[k] + \mu[k]\cdot \mu[k]) \otimes ( \mu[k])^{\otimes q}) \in B.$$
On the one hand, the induction hypothesis (i) guarantees that
$$ d \mu[k] + \mu[k] \cdot \mu[k] = R_{(k+2)} \quad \mathrm{mod} ( F_{k+3}(A)),$$
which implies that
$$ X_{k+2}= \psi_1(R_{(k+2)}).$$
On the other hand, since $\psi$ is an $\Ainfty$-morphism, $X_{(k+2)}$ equals the $ B_{(k+2)}$-component of
$$ d\psi_*(\mu[k]) + \psi_*(\mu[k])\cdot \psi_*(\mu[k]),$$
which equals the $B_{(k+2)}$-component of
$$ d\psi_*(\mu[k]) + \sum_{r+s=k+2}\tau_{(r)}\cdot \tau_{(s)} = d \left(\psi_*(\mu(k)) - \tau_{(k+1)}\right),$$
We conclude that 
$$ \psi_1(R_{(k+2)}) = X_{(k+2)} = d \left((\psi_*(\mu[k]))_{(k+1)} - \tau_{(k+1)} \right),$$
and in particular that the cohomology class of
$\psi_1(R_{(k+2)})$
is trivial. Since $\psi_1$ induces an isomorphism in cohomology, this implies that the cohomology class
of $R_{(k+2)}$ is trivial as well.
Therefore there exists $\mu_{(k+1)}$ such that
$$ d\mu_{(k+1)} = -R_{(k+2)} = -\sum_{r+s=k+2}\mu_{(r)}\cdot \mu_{(s)},$$
as desired.
%Observe that by construction $\tilde{mu}(k+1)$ satisfies
%$$ d \tilde{\mu}(k+1)_{(l)} = \sum_{r+s=l+1} \tilde{\mu}(k+1)_{(r)}\cdot \tilde{\mu}(k+1)_{(s)}$$ 
 %for all $2\le l \le k+1$.
Moreover, the component of
$\tau_{(k+1)} - \psi_*(\mu[k]+\mu_{(k+1)})$
in $B_{(k+1)}$ is closed since
\begin{eqnarray*}
 d\left(  (\psi_*(\mu[k]+\mu_{(k+1)}]))_{(k+1)}-\tau_{(k+1)} \right) &=& d(\psi_*(\mu[k])_{(k+1)}) + d (\psi_1(\mu_{(k+1)}) -d(\tau_{(k+1)})\\
 &=&X_{(k+2)} + \psi_1 (d \mu_{(k+1)})=X_{(k+2)}-\psi_1(R_{(k+2)})=0.\end{eqnarray*}

Because $\psi_1$ induces an isomorphism in cohomology, we can find a cocycle $\lambda_{(k+1)} \in A$
and an element $\xi_{(k)} \in B_{(k)}$ 
such that
$$ \tau_{(k+1)} +d \xi_{(k)}=\psi_*(\mu[k]+\mu_{(k+1)})_{(k+1)} +\psi_1(\lambda_{(k+1)}) .$$
holds.
Finally we set
$ \mu'[k+1]:= \mu[k]+ \mu_{(k+1)}+ \lambda_{(k+1)}$
 and observe that by construction we have
 \[ \psi_*(\mu'[k+1])= \tau \bullet e^{\xi_{(k)} }\,\, \mathrm{mod}\, (F_{k+2}(B)).\]
 This completes the inductive argument and hence the proof.

\end{proof}

\thebibliography{10}

\bibitem{Algebraic-string-bracket}
H. Abbaspour, T. Tradler, M. Zeinalian,
{\em Algebraic string bracket as a Poisson bracket},
J. Noncommut. Geom. {\bf 4} (2010), no. 3, 331--347.

\bibitem{AZ}
H. Abbaspour and M. Zeinalian, {\em String Bracket and flat Connections},
Algebr. Geom. Topol. {\bf 7} (2007), 197--231.

\bibitem{AS}
C. Arias Abad and F. Sch\"atz,
{\em The $\mathsf{A}_\infty$ de Rham theorem and the integration of representations up to homotopy}, 
Int. Math. Res. Not.16 (2013), 3790--3855.

\bibitem{BZN} Ben-Zvi and D. Nadler,
{\em Loop spaces and connections}, J. Topology {\bf 5}, 2 (2012),  377--430.

\bibitem{BS}
J. Block and A. Smith, {\em The higher Riemann-Hilbert correspondence}, Adv. Math., Vol {\bf 252} (2014), 382--405.

\bibitem{AlbertoCarlo}
A.S. Cattaneo, C.A. Rossi, {\em Higher-dimensional BF theories in the Batalin-Vilkovisky
formalism: the BV action and generalized Wilson loops}, Comm. Math. Phys. {\bf 221} (2001),
no. 3, 591--657.

\bibitem{Chen}
K.T. Chen,
{\em Iterated integrals of differential forms and loop space homology}, Ann. of Math. (2) {\bf 97} (1973), 217--246.

\bibitem{Chen_reduced-bar}
K.T. Chen,
{\em Reduced bar constructions on de Rham complexes}, in Algebra, topology, and category theory (a collection of papers in honor of Samuel Eilenberg), Academic Press, New York (1976), 19--32.

\bibitem{FHT}
Y. F\'elix, S. Halperin, J-C. Thomas,
{\em Differential Graded Algebras in Topology}, Handbook of algebraic topology, North-Holland, Amsterdam, (1995), 
829--865. 

\bibitem{GJP}
E. Getzler, J. Jones and S. Petrack,
{\em Differential forms on loop spaces and the cyclic Bar complex}, J. Topology, Volume {\bf 30}, Issue 3 (1991), 339--371.

\bibitem{Holstein}
J.V.S. Holstein,
{\em Morita cohomology},
Mathematical Proceedings of the Cambridge Philosophical Society, Vol {\bf 158}, Issue 01 (2015).

\bibitem{diffeology}
P. Iglesias-Zemmour, {\em Diffeology}, 
Mathematical Surveys and Monographs, {\bf 185}, AMS, Providence, RI (2013), xxiv+439 pp. ISBN: 978-0-8218-9131-5.

\bibitem{Igusa}
K. Igusa,
{\em Iterated integrals of superconnections}, \texttt{arXiv:0912.0249}.

\bibitem{Keller}
B. Keller, {\em Introduction to A-infinity algebras and modules},
Homology Homotopy Appl., Vol. {\bf 3} (2001), no. 1, 1--35. 

\bibitem{Malm_thesis}
E. Malm, {\em String topology and the based loop space}, PhD thesis,
condensed version: \texttt{arXiv:1103.6198}.

\bibitem{TWZ}
T. Tradler, S. Wilson, M. Zeinalian,
{\em Equivariant holonomy for bundles and abelian gerbes}, Commu. Math. Phys., Volume {\bf 315} (2012), Issue 1, pp 39--108.
\end{document}